\documentclass[11pt]{preprint}
\usepackage{difftrees} %Added%
\usepackage[full]{textcomp}
\usepackage[osf]{newtxtext} 
\usepackage[cal=boondoxo]{mathalfa}
\usepackage{colortbl}

\usepackage{tikz-cd} %Added%
\usetikzlibrary{cd} %Added%
\usepackage[all,cmtip]{xy}
\usepackage{comment}

\usepackage{amssymb}
\usepackage{mathtools}
\usepackage{hyperref}
\usepackage{breakurl}
\usepackage{mhenvs}
\usepackage{mhequ} 
\usepackage{mhsymb}
\usepackage{booktabs}
\usepackage{tikz}
\usepackage{tcolorbox}
\usepackage{mathrsfs}
\usepackage[utf8]{inputenc}
\usepackage{longtable}
\usepackage{wrapfig}
\usepackage{rotating} %Added%
\usepackage{subcaption}
\usepackage{mathrsfs}
\usepackage{epsfig}
\usepackage{microtype}
\usepackage{comment}
\usepackage{wasysym}
\usepackage{centernot}
\usepackage{enumitem}
\usepackage{bm}
\usepackage{stackrel}
\usepackage{graphicx}
\usepackage{axodraw}
\usepackage{xspace}
\usepackage{subcaption} %Added%
\usepackage{epsfig} %Added%
\usepackage{axodraw} %Added%
\usepackage{xspace} %Added%
\usepackage[toc,page]{appendix} %Added%
\usepackage[all,cmtip]{xy} %Added%
\usepackage{relsize} %Added%
\usepackage{shuffle} %Added%
\makeatletter
\newcommand{\globalcolor}[1]{%
  \color{#1}\global\let\default@color\current@color
}
\makeatother

\usetikzlibrary{calc}
\usetikzlibrary{decorations}
\usetikzlibrary{positioning}
\usetikzlibrary{shapes}
\usetikzlibrary{external}

\definecolor{blush}{rgb}{0.87, 0.36, 0.51}
	\definecolor{brightcerulean}{rgb}{0.11, 0.67, 0.84}
	\definecolor{greenryb}{rgb}{0.4, 0.69, 0.2}

\newif\ifdark
%\IfFileExists{dark}{\darktrue}{\darkfalse}
\darkfalse

\ifdark
\definecolor{darkred}{rgb}{0.9,0.2,0.2}
\definecolor{darkblue}{rgb}{0.7,0.3,1}
\definecolor{darkgreen}{rgb}{0.1,0.9,0.1}
\definecolor{franck}{rgb}{0,0.8,1}
\definecolor{pagebackground}{rgb}{.15,.21,.18}
\definecolor{pageforeground}{rgb}{.84,.84,.85}
\pagecolor{pagebackground}
\AtBeginDocument{\globalcolor{pageforeground}}
\tikzexternalize[prefix=tikzDark/]
\definecolor{symbols}{rgb}{0,0.7,1}
\colorlet{connection}{red!80!black}
\colorlet{boxcolor}{blue!50}

\else

\definecolor{darkred}{rgb}{0.7,0.1,0.1}
\definecolor{darkblue}{rgb}{0.4,0.1,0.8}
\definecolor{darkgreen}{rgb}{0.1,0.7,0.1}
\definecolor{franck}{rgb}{0,0,1}
\definecolor{pagebackground}{rgb}{1,1,1}
\definecolor{pageforeground}{rgb}{0,0,0}
\tikzexternalize[prefix=tikz/]
\colorlet{symbols}{blue!90!black}
\colorlet{connection}{red!30!black}
\colorlet{boxcolor}{blue!50!black}

\fi

\def\slash{\leavevmode\unskip\kern0.18em/\penalty\exhyphenpenalty\kern0.18em}
\def\dash{\leavevmode\unskip\kern0.18em--\penalty\exhyphenpenalty\kern0.18em}

\DeclareMathAlphabet{\mathbbm}{U}{bbm}{m}{n}

\DeclareFontFamily{U}{BOONDOX-calo}{\skewchar\font=45 }
\DeclareFontShape{U}{BOONDOX-calo}{m}{n}{
  <-> s*[1.05] BOONDOX-r-calo}{}
\DeclareFontShape{U}{BOONDOX-calo}{b}{n}{
  <-> s*[1.05] BOONDOX-b-calo}{}
\DeclareMathAlphabet{\mcb}{U}{BOONDOX-calo}{m}{n}
\SetMathAlphabet{\mcb}{bold}{U}{BOONDOX-calo}{b}{n}
%\DeclareMathAlphabet{\mathbcalboondox}{U}{BOONDOX-calo}{b}{n}

\setlist{noitemsep,topsep=4pt,leftmargin=1.5em}

\DeclareMathAlphabet{\mathbbm}{U}{bbm}{m}{n}

\DeclareMathAlphabet{\mcb}{U}{BOONDOX-calo}{m}{n}
\SetMathAlphabet{\mcb}{bold}{U}{BOONDOX-calo}{b}{n}
\DeclareFontFamily{U}{mathx}{\hyphenchar\font45}
\DeclareFontShape{U}{mathx}{m}{n}{
      <5> <6> <7> <8> <9> <10>
      <10.95> <12> <14.4> <17.28> <20.74> <24.88>
      mathx10
      }{}
\DeclareSymbolFont{mathx}{U}{mathx}{m}{n}
\DeclareMathSymbol{\bigtimes}{1}{mathx}{"91}

\setlength{\marginparwidth}{3cm}

\providecommand{\figures}{false}
{ \ifthenelse{\equal{\figures}{false}} {#1}{\[ {\rm Figure \ missing !} \]} }{}
\def\id{\mathrm{id}}

\def\CH{\mathcal{H}}

\def\CT{\mathcal{T}}

\tikzstyle{tinydots}=[dash pattern=on \pgflinewidth off \pgflinewidth]
\tikzstyle{superdense}=[dash pattern=on 4pt off 1pt]

%mathcals

%\newcommand{\G}{\mathcal{G}}

%\newcommand{\mcT}{\mathcal{T}}

\newcommand{\mcT}{\mathcal{T}}

\newcommand{\beq}{\begin{equation}}
\newcommand{\eeq}{\end{equation}}

\usepackage{empheq}

%\newcommand{\G}{\mcb{Q}} [Clashes with package axodraw]

%mathbbs

%mathbfs

%mathfraks

\newcommand{\mfe}{\mathfrak{e}}

\newcommand{\mfL}{\mathfrak{L}}

\newcommand{\mft}{\mathfrak{t}}

\newcommand{\mfp}{\mathfrak{p}}

\newcommand{\mff}{\mathfrak{f}}

\def\Labp{\mathfrak{p}}

\def\Labhom{\mathfrak{t}}

\def\Lab{\mathfrak{L}}

%\newcommand*{\shuffle} %Added%
%{{\,\begin{sideways}\begin{sideways}\begin{sideways}
%$\tiny{\exists}$\end{sideways}\end{sideways}\end{sideways}\,}} %Added%
%\def\hattimes{\mathbin{\hat\otimes}}
\def\${|\!|\!|}

\newenvironment{DIFnomarkup}{}{} % see man latexdiff

\newtheorem{assumption}{Assumption}
 
\theorembodyfont{\rmfamily}

\newcommand{\rrightarrow}{{\to\hskip -4.9mm\raise 1pt\hbox{$\to$}}}

\newfont{\indic}{bbmss12}

\def\Nabla_#1{\nabla_{\!#1}}

%%%%%%%%%%%%%%%%%%%%%%%%%%%%%%%%%%%%%%%%%%%%%%%%%%%%%%%%
%
%
%              Some tikz code to draw nice trees
%
%
%%%%%%%%%%%%%%%%%%%%%%%%%%%%%%%%%%%%%%%%%%%%%%%%%%%%%%%%

\makeatletter
\pgfdeclareshape{crosscircle}
{
  \inheritsavedanchors[from=circle] % this is nearly a circle
  \inheritanchorborder[from=circle]
  \inheritanchor[from=circle]{north}
  \inheritanchor[from=circle]{north west}
  \inheritanchor[from=circle]{north east}
  \inheritanchor[from=circle]{center}
  \inheritanchor[from=circle]{west}
  \inheritanchor[from=circle]{east}
  \inheritanchor[from=circle]{mid}
  \inheritanchor[from=circle]{mid west}
  \inheritanchor[from=circle]{mid east}
  \inheritanchor[from=circle]{base}
  \inheritanchor[from=circle]{base west}
  \inheritanchor[from=circle]{base east}
  \inheritanchor[from=circle]{south}
  \inheritanchor[from=circle]{south west}
  \inheritanchor[from=circle]{south east}
  \inheritbackgroundpath[from=circle]
  \foregroundpath{
    \centerpoint%
    \pgf@xc=\pgf@x%
    \pgf@yc=\pgf@y%
    \pgfutil@tempdima=\radius%
    \pgfmathsetlength{\pgf@xb}{\pgfkeysvalueof{/pgf/outer xsep}}%  
    \pgfmathsetlength{\pgf@yb}{\pgfkeysvalueof{/pgf/outer ysep}}%  
    \ifdim\pgf@xb<\pgf@yb%
      \advance\pgfutil@tempdima by-\pgf@yb%
    \else%
      \advance\pgfutil@tempdima by-\pgf@xb%
    \fi%
    \pgfpathmoveto{\pgfpointadd{\pgfqpoint{\pgf@xc}{\pgf@yc}}{\pgfqpoint{-0.707107\pgfutil@tempdima}{-0.707107\pgfutil@tempdima}}}
    \pgfpathlineto{\pgfpointadd{\pgfqpoint{\pgf@xc}{\pgf@yc}}{\pgfqpoint{0.707107\pgfutil@tempdima}{0.707107\pgfutil@tempdima}}}
    \pgfpathmoveto{\pgfpointadd{\pgfqpoint{\pgf@xc}{\pgf@yc}}{\pgfqpoint{-0.707107\pgfutil@tempdima}{0.707107\pgfutil@tempdima}}}
    \pgfpathlineto{\pgfpointadd{\pgfqpoint{\pgf@xc}{\pgf@yc}}{\pgfqpoint{0.707107\pgfutil@tempdima}{-0.707107\pgfutil@tempdima}}}
  }
}
\makeatother

\def\symbol#1{\textcolor{symbols}{#1}}

\def\decorate#1#2{
        \ifnum#2>0
    		\foreach \count in {1,...,#2}{
	       	let
				\p1 = (sourcenode.center),
                \p2 = (sourcenode.east),
				\n1 = {\x2-\x1},
				\n2 = {1mm},
				\n3 = {(1.3+0.6*(\count-1))*\n1},
				\n4 = {0.7*\n1}
			in 
        		node[rectangle,fill=symbols,rotate=30,inner sep=0pt,minimum width=0.2*\n2,minimum height=\n2] at ($(sourcenode.center) + (\n3,\n4)$) {}
				}
		\fi
        \ifnum#1>0
    		\foreach \count in {1,...,#1}{
	       	let
				\p1 = (sourcenode.center),
                \p2 = (sourcenode.east),
				\n1 = {\x2-\x1},
				\n2 = {1mm},
				\n3 = {(1.3+0.6*(\count-1))*\n1},
				\n4 = {0.7*\n1}
			in 
        		node[rectangle,fill=symbols,rotate=-30,inner sep=0pt,minimum width=0.2*\n2,minimum height=\n2] at ($(sourcenode.center) + (-\n3,\n4)$) {}
				}
		\fi
}

\tikzset{
    dectriangle/.style 2 args={
        triangle,
        alias=sourcenode,
        append after command={\decorate{#1}{#2}}
    },
    dectriangle/.default={0}{0},
}

\tikzset{
	cross/.style={path picture={ 
  		\draw[symbols]
			(path picture bounding box.south east) -- (path picture bounding box.north west) (path picture bounding box.south west) -- (path picture bounding box.north east);
		}},
root/.style={circle,fill=green!50!black,inner sep=0pt, minimum size=1.2mm},
        dot/.style={circle,fill=pageforeground,inner sep=0pt, minimum size=1mm},
        dotred/.style={circle,fill=pageforeground!50!pagebackground,inner sep=0pt, minimum size=2mm},
        var/.style={circle,fill=pageforeground!10!pagebackground,draw=pageforeground,inner sep=0pt, minimum size=3mm},
        kernel/.style={semithick,shorten >=2pt,shorten <=2pt},
        kernels/.style={snake=zigzag,shorten >=2pt,shorten <=2pt,segment amplitude=1pt,segment length=4pt,line before snake=2pt,line after snake=5pt,},
        rho/.style={densely dashed,semithick,shorten >=2pt,shorten <=2pt},
           testfcn/.style={dotted,semithick,shorten >=2pt,shorten <=2pt},
        renorm/.style={shape=circle,fill=pagebackground,inner sep=1pt},
        labl/.style={shape=rectangle,fill=pagebackground,inner sep=1pt},
        xic/.style={very thin,circle,draw=symbols,fill=symbols,inner sep=0pt,minimum size=1.2mm},
        g/.style={very thin,rectangle,draw=symbols,fill=symbols!10!pagebackground,inner sep=0pt,minimum width=2.5mm,minimum height=1.2mm},
        xi/.style={very thin,circle,draw=symbols,fill=symbols!10!pagebackground,inner sep=0pt,minimum size=1.2mm},
	xies/.style={very thin,rectangle,fill=green!50!black!25,draw=symbols,inner sep=0pt,minimum size=1.1mm},
	xiesf/.style={very thin,rectangle,fill=green!50!black,draw=symbols,inner sep=0pt,minimum size=1.1mm},
        xix/.style={very thin,crosscircle,fill=symbols!10!pagebackground,draw=symbols,inner sep=0pt,minimum size=1.2mm},
        X/.style={very thin,cross,rectangle,fill=pagebackground,draw=symbols,inner sep=0pt,minimum size=1.2mm},
	xib/.style={thin,circle,fill=symbols!10!pagebackground,draw=symbols,inner sep=0pt,minimum size=1.6mm},
	xie/.style={thin,circle,fill=green!50!black,draw=symbols,inner sep=0pt,minimum size=1.6mm},
	xid/.style={thin,circle,fill=symbols,draw=symbols,inner sep=0pt,minimum size=1.6mm},
	xibx/.style={thin,crosscircle,fill=symbols!10!pagebackground,draw=symbols,inner sep=0pt,minimum size=1.6mm},
	kernels2/.style={very thick,draw=connection,segment length=12pt},
	keps/.style={thin,draw=symbols,->},
	kepspr/.style={thick,draw=connection,->},
	krho/.style={thin,draw=symbols,superdense,->},
	krhopr/.style={thick,draw=connection,superdense},
	triangle/.style = { regular polygon, regular polygon sides=3},
	not/.style={thin,circle,draw=connection,fill=connection,inner sep=0pt,minimum size=0.5mm},
	diff/.style = {very thin,draw=symbols,triangle,fill=red!50!black,inner sep=0pt,minimum size=1.6mm},
	diff1/.style = {very thin,dectriangle={1}{0},fill=red!50!black,draw=symbols,inner sep=0pt,minimum size=1.6mm},
	diff2/.style = {very thin,dectriangle={1}{1},fill=red!50!black,draw=symbols,inner sep=0pt,minimum size=1.6mm},
		diffmini/.style = {very thin,rectangle,fill=black,draw=black,inner sep=0pt,minimum size=0.75mm},
	 kernelsmod/.style={very thick,draw=connection,segment length=12pt},
	 rec/.style = {very thin,rectangle,fill=black,draw=black,inner sep=0pt,minimum size=2mm},
	cerc/.style={very thin,circle,draw=black,fill=symbols,inner sep=0pt,minimum size=2mm},
	stars/.style={very thin,star,star points=6,star point ratio=0.5, draw=black,fill=red,inner sep=0pt,minimum size=0.7mm},
	>=stealth,
        }
        \tikzset{
root/.style={circle,fill=black!50,inner sep=0pt, minimum size=3mm},
        circ/.style={circle,fill=white,draw=black,very thin,inner sep=.5pt, minimum size=1.2mm},
        round1/.style={fill=white,outer sep = 0,inner sep=2pt,rounded corners=1mm,draw,text=black,thin,minimum size=1.2mm},
          circ1/.style={circle,fill=red!10,draw=red,very thin,inner sep=.5pt, minimum size=1.2mm},
        rect/.style={fill=white,outer sep = 0,inner sep=2pt,rectangle,draw,text=black,thin,minimum size=1.2mm},
        rect1/.style={fill=white,outer sep = 0,inner sep=2pt,rectangle,draw,text=black,thin,minimum size=1.2mm},
        round2/.style={fill=red!10,outer sep = 0,inner sep=2pt,rounded corners=1mm,draw,text=black,thin,minimum size=1.2mm},
       round3/.style={fill=blue!10,outer sep = 0,inner sep=2pt,rounded corners=1mm,draw,text=black,thin,minimum size=1.2mm}, 
        rect2/.style={fill=black!10,outer sep = 0,inner sep=2pt,rectangle,draw,text=black,thin,minimum size=1.2mm},
        dot/.style={circle,fill=black,inner sep=0pt, minimum size=1.2mm},
        dotred/.style={circle,fill=black!50,inner sep=0pt, minimum size=2mm},
        var/.style={circle,fill=black!10,draw=black,inner sep=0pt, minimum size=3mm},
        kernel/.style={semithick,shorten >=2pt,shorten <=2pt},
         diag/.style={thin,shorten >=4pt,shorten <=4pt},
        kernel1/.style={thick},
        kernels/.style={snake=zigzag,shorten >=2pt,shorten <=2pt,segment amplitude=1pt,segment length=4pt,line before snake=2pt,line after snake=5pt,},
		kernels1/.style={snake=zigzag,segment amplitude=0.5pt,segment length=2pt},
		rho1/.style={densely dotted,semithick},
        rho/.style={densely dashed,semithick,shorten >=2pt,shorten <=2pt},
           testfcn/.style={dotted,semithick,shorten >=2pt,shorten <=2pt},
           visible/.style={draw, circle, fill, inner sep=0.25ex},
        renorm/.style={shape=circle,fill=white,inner sep=1pt},
        labl/.style={shape=rectangle,fill=white,inner sep=1pt},
        xic/.style={very thin,circle,fill=symbols,draw=black,inner sep=0pt,minimum size=1.2mm},
        xi/.style={very thin,circle,fill=blue!10,draw=black,inner sep=0pt,minimum size=1.2mm},
	xib/.style={very thin,circle,fill=blue!10,draw=black,inner sep=0pt,minimum size=1.6mm},
	xie/.style={very thin,circle,fill=green!50!black,draw=black,inner sep=0pt,minimum size=1mm},
	xid/.style={very thin,circle,fill=symbols,draw=black,inner sep=0pt,minimum size=1.6mm},
	edgetype/.style={very thin,circle,draw=black,inner sep=0pt,minimum size=5mm},
	nodetype/.style={very thick,circle,draw=black,inner sep=0pt,minimum size=5mm},
	kernels2/.style={very thick,draw=connection,segment length=12pt},
clean/.style={thin,circle,fill=black,inner sep=0pt,minimum size=1mm},	not/.style={thin,circle,fill=symbols,draw=connection,fill=connection,inner sep=0pt,minimum size=0.8mm},
	>=stealth,
        }

\makeatletter
\def\DeclareSymbol#1#2#3{%
	\expandafter\gdef\csname MH@symb@#1\endcsname{\tikzsetnextfilename{symbol#1}%
	\tikz[baseline=#2,scale=0.15,draw=symbols,line join=round]{#3}}%
	\expandafter\gdef\csname MH@symb@#1s\endcsname{\scalebox{0.75}{\tikzsetnextfilename{symbol#1}%
	\tikz[baseline=#2,scale=0.15,draw=symbols,line join=round]{#3}}}%
	\expandafter\gdef\csname MH@symb@#1ss\endcsname{\scalebox{0.65}{\tikzsetnextfilename{symbol#1}%
	\tikz[baseline=#2,scale=0.15,draw=symbols,line join=round]{#3}}}%
	}
\def\<#1>{\ifthenelse{\boolean{mmode}}{\mathchoice{\csname MH@symb@#1\endcsname}{\csname MH@symb@#1\endcsname}{\csname MH@symb@#1s\endcsname}{\csname MH@symb@#1ss\endcsname}}{\csname MH@symb@#1\endcsname}}
\makeatother

\DeclareSymbol{Xi22}{0.5}{\draw (0,0) node[xi] {} -- (-1,1) node[not] {} -- (0,2) node[xi] {};} % 1 not used in the text
\DeclareSymbol{Xi2}{-2}{\draw (-1,-0.25) node[xi] {} -- (0,1) node[xi] {};} % 2
\DeclareSymbol{Xi2b}{-2}{\draw (-1,-0.25) node[xic] {} -- (0,1) node[xic] {};} % 2
\DeclareSymbol{Xi2g}{-2}{\draw (-1,-0.25) node[xies] {} -- (0,1) node[xi] {};} % 2
\DeclareSymbol{Xi2g2}{-2}{\draw (-1,-0.25) node[xi] {} -- (0,1) node[xies] {};} % 2
\DeclareSymbol{cXi2}{-2}{\draw (0,-0.25) node[xi] {} -- (-1,1) node[xic] {};}%3
\DeclareSymbol{Xi3}{0}{\draw (0,0) node[xi] {} -- (-1,1) node[xi] {} -- (0,2) node[xi] {};}%4
\DeclareSymbol{XiIIXi}{0}{\draw (0,0) node[xi] {} -- (-1,1); \draw[kernels2] (-1,1) node[not] {} -- (0,2) node[xi] {};}%4

\DeclareSymbol{Xi4}{2}{\draw (0,0) node[xi] {} -- (-1,1) node[xi] {} -- (0,2) node[xi] {} -- (-1,3) node[xi] {};}%5
\DeclareSymbol{Xi4_1}{2}{\draw (0,0) node[xic] {} -- (-1,1) node[xic] {} -- (0,2) node[xi] {} -- (-1,3) node[xi] {};}%6
\DeclareSymbol{Xi4_2}{2}{\draw (0,0) node[xic] {} -- (-1,1) node[xi] {} -- (0,2) node[xi] {} -- (-1,3) node[xic] {};}%7
\DeclareSymbol{Xi2X}{-2}{\draw (0,-0.25) node[xi] {} -- (-1,1) node[xix] {};}%8
\DeclareSymbol{XXi2}{-2}{\draw (0,-0.25) node[xix] {} -- (-1,1) node[xi] {};}%9
\DeclareSymbol{IIXi}{0}{\draw (0,-0.25) node[not] {} -- (-1,1) node[xi] {} -- (0,2) node[xi] {};}%10
\DeclareSymbol{IXi^2}{-1}{\draw (-1,1) node[xi] {} -- (0,0) node[not] {} -- (1,1) node[xi] {};}%11
\DeclareSymbol{IIXi^2}{-4}{\draw (0,-1.5) node[not] {} -- (0,0);
\draw[kernels2] (-1,1) node[xi] {} -- (0,0) node[not] {} -- (1,1) node[xi] {};}%12
\DeclareSymbol{XiX}{-2.8}{\node[xibx] {};}%13
\DeclareSymbol{tauX}{-2.8}{ \node[X] {};}%14
\DeclareSymbol{Xi}{-2.8}{\node[xib] {};}%15

\DeclareSymbol{IXiX}{-1}{\draw (0,-0.25) node[not] {} -- (-1,1) node[xix] {};}%18
%\DeclareSymbol{Xi3b}{-1}{\draw (-1,1) node[xi] {} -- (0,0) node[xi] {} -- (1,1) node[xi] {};}%19
\DeclareSymbol{IXi3}{2}{\draw (0,-0.25) node[not] {} -- (-1,1) node[xi] {} -- (0,2) node[xi] {} -- (-1,3) node[xi] {};}%20
\DeclareSymbol{IXi}{-2}{\draw (0,-0.25) node[not] {} -- (-1,1) node[xi] {};}%21
\DeclareSymbol{XiI}{-2}{\draw (0,-0.25) node[xi] {} -- (-1,1) node[not] {};}%22

\DeclareSymbol{Xi4b}{0}{\draw(0,1.5) node[xi] {} -- (0,0); \draw (-1,1) node[xi] {} -- (0,0) node[xi] {} -- (1,1) node[xi] {};}%23
\DeclareSymbol{Xi4b'}{0}{\draw(0,1.5) node[xi] {} -- (0,-0.2); \draw (-1,1) node[xi] {} -- (0,-0.2) node[not] {} -- (1,1) node[xi] {};}%24 not used
\DeclareSymbol{Xi4c}{0}{\draw (0,1) -- (0.8,2.2) node[xi] {};\draw (0,-0.25) node[xi] {} -- (0,1) node[xi] {} -- (-0.8,2.2) node[xi] {};}%25
\DeclareSymbol{Xi4d}{-4.5}{\draw (0,-1.5) node[not] {} -- (0,0); \draw (-1,1) node[xi] {} -- (0,0) node[xi] {} -- (1,1) node[xi] {};}%26 not used
\DeclareSymbol{Xi4e}{0}{\draw (0,2) node[xi] {} -- (-1,1) node[xi] {} -- (0,0) node[xi] {} -- (1,1) node[xi] {};}%27
\DeclareSymbol{Xi4e'}{0}{\draw (0,2) node[xi] {} -- (-1,1) node[xi] {} -- (0,-0.2) node[not] {} -- (1,1) node[xi] {};}%28 not used

\DeclareSymbol{Xitwo}%29
{0}{\draw[kernels2] (0,0) node[not] {} -- (-1,1) node[not] {}
-- (-2,2) node[not]{} -- (-3,3) node[xi]  {};
\draw[kernels2] (0,0) -- (1,1) node[xi] {};
\draw[kernels2] (-1,1) -- (0,2) node[xi] {};
\draw[kernels2] (-2,2) -- (-1,3) node[xi] {};}

\DeclareSymbol{IXitwo}%30
{0}{\draw (-.7,1.2) node[xi] {} -- (0,-0.2) -- (.7,1.2) node[xi] {};}
\DeclareSymbol{I1Xitwo}%30
{0}{\draw[kernels2] (0,0) node[not] {} -- (-1,1) node[xi] {};
\draw[kernels2] (0,0) -- (1,1) node[xi] {};}

\DeclareSymbol{I1Xitwobis}%30
{0}{\draw[kernels2] (0,0) node[not] {} -- (-1,1) node[xies] {};
\draw[kernels2] (0,0) -- (1,1) node[xies] {};}

\DeclareSymbol{I1Xitwog}%30
{0}{\draw[kernels2] (0,0) node[not] {} -- (-1,1) node[xies] {};
\draw[kernels2] (0,0) -- (1,1) node[xi] {};}

\DeclareSymbol{cI1Xitwo}%31
{0}{\draw[kernels2] (0,0) node[not] {} -- (-1,1) node[xic] {};
\draw[kernels2] (0,0) -- (1,1) node[xi] {};}

\DeclareSymbol{I1IXi3}{0}{\draw (0,0) node[xi] {} -- (-1,1) ; %32
\draw[kernels2] (-1,1) node[not] {} -- (0,2) node[xi] {};
\draw[kernels2] (-1,1) node[not] {} -- (-2,2) node[xi] {};}

\DeclareSymbol{I1Xi3c}{-1}{\draw[kernels2](0,1.5) node[xi] {} -- (0,0) node[not] {}; \draw (-1,1) node[xi] {} -- (0,0) ; \draw[kernels2] (0,0) -- (1,1) node[xi] {};}%33

\DeclareSymbol{I1Xi3cbis}{-1}{\draw[kernels2](0,1.5) node[xies] {} -- (0,0) node[not] {}; \draw (-1,1) node[xies] {} -- (0,0) ; \draw[kernels2] (0,0) -- (1,1) node[xies] {};}%33

\DeclareSymbol{I1IXi3b}{0}{\draw[kernels2] (0,0) node[not] {} -- (-1,1) ; \draw[kernels2] (0,0)   -- (1,1) node[xi] {} ;
\draw (-1,1) node[xi] {} -- (0,2) node[xi] {};
}%34

\DeclareSymbol{I1IXi3c}{0}{\draw[kernels2] (0,0) node[not] {} -- (-1,1) ; \draw[kernels2] (0,0)   -- (1,1) node[xi] {} ;
\draw[kernels2] (-1,1) node[not] {} -- (0,2) node[xi] {};
\draw[kernels2] (-1,1) node[not] {} -- (-2,2) node[xi] {};}%35

\DeclareSymbol{I1IXi3cbis}{0}{\draw[kernels2] (0,0) node[not] {} -- (-1,1) ; \draw[kernels2] (0,0)   -- (1,1) node[xies] {} ;
\draw[kernels2] (-1,1) node[not] {} -- (0,2) node[xies] {};
\draw[kernels2] (-1,1) node[not] {} -- (-2,2) node[xies] {};}%35

\DeclareSymbol{I1Xi}{0}{\draw[kernels2] (0,0) node[not] {} -- (-1,1)  node[xi] {} ;}%36

\DeclareSymbol{I1Xi4a}{2}{\draw[kernels2] (0,0) node[not] {} -- (-1,1) ; \draw[kernels2] (0,0) node[not] {} -- (1,1) node[xi] {} ;%37
\draw (-1,1) node[xi] {} -- (0,2) node[xi] {} -- (-1,3) node[xi] {};}
%\DeclareSymbol{Xi4}{2}{\draw (0,0) node[xi] {} -- (-1,1) node[xi] {} -- (0,2) node[xi] {} -- (-1,3) node[xi] {};}%38

\DeclareSymbol{cI1Xi4a}{2}{\draw[kernels2] (0,0) node[not] {} -- (-1,1) ; \draw[kernels2] (0,0) node[not] {} -- (1,1) node[xic] {} ;%39
\draw (-1,1) node[xic] {} -- (0,2) node[xi] {} -- (-1,3) node[xi] {};}
%\DeclareSymbol{Xi4}{2}{\draw (0,0) node[xi] {} -- (-1,1) node[xi] {} -- (0,2) node[xi] {} -- (-1,3) node[xi] {};}%40

\DeclareSymbol{I1Xi4b}{2}{\draw (0,0) node[xi] {} -- (-1,1) node[xi] {} -- (0,2) ; \draw[kernels2] (0,2) node[not] {} -- (-1,3) node[xi] {};\draw[kernels2] (0,2)  -- (1,3) node[xi] {};
}%41

\DeclareSymbol{cI1Xi4b}{2}{\draw (0,0) node[xic] {} -- (-1,1) node[xic] {} -- (0,2) ; \draw[kernels2] (0,2) node[not] {} -- (-1,3) node[xi] {};\draw[kernels2] (0,2)  -- (1,3) node[xi] {};
}%42

\DeclareSymbol{I1Xi4c}{2}{\draw (0,0) node[xi] {} -- (-1,1) node[not] {}; \draw[kernels2] (-1,1) -- (0,2) ; 
\draw[kernels2] (-1,1) -- (-2,2) node[xi] {} ;
\draw (0,2) node[xi] {} -- (-1,3) node[xi] {};}%43

\DeclareSymbol{cI1Xi4c}{2}{\draw (0,0) node[xic] {} -- (-1,1) node[not] {}; \draw[kernels2] (-1,1) -- (0,2) ; 
\draw[kernels2] (-1,1) -- (-2,2) node[xic] {} ;
\draw (0,2) node[xi] {} -- (-1,3) node[xi] {};}%44

\DeclareSymbol{I1Xi4ab}{2}{\draw[kernels2] (0,0) node[not] {} -- (-1,1) ; \draw[kernels2] (0,0) node[not] {} -- (1,1) node[xi] {};\draw (-1,1) node[xi] {} -- (0,2) ; \draw[kernels2] (0,2) node[not] {} -- (-1,3) node[xi] {};\draw[kernels2] (0,2)  -- (1,3) node[xi] {}; }%45

\DeclareSymbol{cI1Xi4ab}{2}{\draw[kernels2] (0,0) node[not] {} -- (-1,1) ; \draw[kernels2] (0,0) node[not] {} -- (1,1) node[xic] {};\draw (-1,1) node[xic] {} -- (0,2) ; \draw[kernels2] (0,2) node[not] {} -- (-1,3) node[xi] {};\draw[kernels2] (0,2)  -- (1,3) node[xi] {}; }%46

\DeclareSymbol{I1Xi4bc}{2}{\draw (0,0) node[xi] {} -- (-1,1) node[not] {}; \draw[kernels2] (-1,1) -- (0,2) ; %47
\draw[kernels2] (-1,1) -- (-2,2) node[xi] {} ; \draw[kernels2] (0,2) node[not] {} -- (-1,3) node[xi] {};\draw[kernels2] (0,2)  -- (1,3) node[xi] {};
}

\DeclareSymbol{cI1Xi4bc}{2}{\draw (0,0) node[xic] {} -- (-1,1) node[not] {}; \draw[kernels2] (-1,1) -- (0,2) ; %48
\draw[kernels2] (-1,1) -- (-2,2) node[xic] {} ; \draw[kernels2] (0,2) node[not] {} -- (-1,3) node[xi] {};\draw[kernels2] (0,2)  -- (1,3) node[xi] {};
}

\DeclareSymbol{I1Xi4abcc1}{2}{\draw[kernels2] (0,0) node[not] {} -- (-1,1) node[not] {}%50
-- (-2,2) node[not]{} -- (-3,3) node[xic]  {};
\draw[kernels2] (0,0) -- (1,1) node[xic] {};
\draw[kernels2] (-1,1) -- (0,2) node[xi] {};
\draw[kernels2] (-2,2) -- (-1,3) node[xi] {};
}

\DeclareSymbol{I1Xi4abcc1b}{2}{\draw[kernels2] (0,0) node[not] {} -- (-1,1) node[not] {}%50
-- (-2,2) node[not]{} -- (-3,3) node[xi]  {};
\draw[kernels2] (0,0) -- (1,1) node[xic] {};
\draw[kernels2] (-1,1) -- (0,2) node[xic] {};
\draw[kernels2] (-2,2) -- (-1,3) node[xi] {};
}

\DeclareSymbol{I1Xi4abcc2}{2}{\draw[kernels2] (0,0) node[not] {} -- (-1,1) node[not] {}%51
-- (-2,2) node[not]{} -- (-3,3) node[xic]  {};
\draw[kernels2] (0,0) -- (1,1) node[xi] {};
\draw[kernels2] (-1,1) -- (0,2) node[xi] {};
\draw[kernels2] (-2,2) -- (-1,3) node[xic] {};
}

\DeclareSymbol{I1Xi4ac}{2}{\draw[kernels2] (0,0) node[not] {} -- (-1,1) ; \draw[kernels2] (0,0) node[not] {} -- (1,1) node[xi] {}; 
\draw[kernels2] (-1,1) node[not] {} -- (0,2) ; %52
\draw[kernels2] (-1,1) -- (-2,2) node[xi] {} ;
\draw (0,2) node[xi] {} -- (-1,3) node[xi] {};}

\DeclareSymbol{cI1Xi4ac}{2}{\draw[kernels2] (0,0) node[not] {} -- (-1,1) ; \draw[kernels2] (0,0) node[not] {} -- (1,1) node[xic] {}; 
\draw[kernels2] (-1,1) node[not] {} -- (0,2) ; %53
\draw[kernels2] (-1,1) -- (-2,2) node[xic] {} ;
\draw (0,2) node[xi] {} -- (-1,3) node[xi] {};}

\DeclareSymbol{I1Xi4acc1}{2}{\draw[kernels2] (0,0) node[not] {} -- (-1,1) ; \draw[kernels2] (0,0) node[not] {} -- (1,1) node[xic] {}; 
\draw[kernels2] (-1,1) node[not] {} -- (0,2) ; %54
\draw[kernels2] (-1,1) -- (-2,2) node[xi] {} ;
\draw (0,2) node[xic] {} -- (-1,3) node[xi] {};}

\DeclareSymbol{I1Xi4acc2}{2}{\draw[kernels2] (0,0) node[not] {} -- (-1,1) ; \draw[kernels2] (0,0) node[not] {} -- (1,1) node[xic] {}; 
\draw[kernels2] (-1,1) node[not] {} -- (0,2) ; %55
\draw[kernels2] (-1,1) -- (-2,2) node[xi] {} ;
\draw (0,2) node[xi] {} -- (-1,3) node[xic] {};}

\DeclareSymbol{2I1Xi4}{2}{\draw[kernels2] (0,0) node[not] {} -- (-1,1) node[not] {};
\draw[kernels2] (0,0) -- (1,1) node[not] {};%56
\draw[kernels2] (-1,1) -- (-1.5,2.5) node[xi] {};
\draw[kernels2] (-1,1) -- (-0.5,2.5) node[xi] {};
\draw[kernels2] (1,1) -- (0.5,2.5) node[xi] {};
\draw[kernels2] (1,1) -- (1.5,2.5) node[xi] {};
}

\DeclareSymbol{2I1Xi4dis}{2}{\draw[kernels2] (0,0) node[not] {} -- (-1,1) node[not] {};
\draw[kernels2] (0,0) -- (1,1) node[not] {};%56
\draw[kernels2] (-1,1) -- (-1.5,2.5) node[xies] {};
\draw[kernels2] (-1,1) -- (-0.5,2.5) node[xies] {};
\draw[kernels2] (1,1) -- (0.5,2.5) node[xies] {};
\draw[kernels2] (1,1) -- (1.5,2.5) node[xies] {};
}

\DeclareSymbol{2I1Xi4c1}{2}{\draw[kernels2] (0,0) node[not] {} -- (-1,1) node[not] {};%57
\draw[kernels2] (0,0) -- (1,1) node[not] {};
\draw[kernels2] (-1,1) -- (-1.5,2.5) node[xic] {};
\draw[kernels2] (-1,1) -- (-0.5,2.5) node[xi] {};
\draw[kernels2] (1,1) -- (0.5,2.5) node[xic] {};
\draw[kernels2] (1,1) -- (1.5,2.5) node[xi] {};
}

\DeclareSymbol{2I1Xi4c2}{2}{\draw[kernels2] (0,0) node[not] {} -- (-1,1) node[not] {};%58
\draw[kernels2] (0,0) -- (1,1) node[not] {};
\draw[kernels2] (-1,1) -- (-1.5,2.5) node[xic] {};
\draw[kernels2] (-1,1) -- (-0.5,2.5) node[xic] {};
\draw[kernels2] (1,1) -- (0.5,2.5) node[xi] {};
\draw[kernels2] (1,1) -- (1.5,2.5) node[xi] {};
}

\DeclareSymbol{2I1Xi4b}{2}{\draw[kernels2] (0,0) node[not] {} -- (-1,1) ;%59
\draw[kernels2] (0,0) -- (1,1);
\draw (-1,1) node[xi] {} -- (-1,2.5) node[xi] {};
\draw (1,1)  node[xi] {} -- (1,2.5) node[xi] {};
}

\DeclareSymbol{2I1Xi4bb}{2}{\draw[kernels2] (0,0) node[not] {} -- (-1,1) ;%59
\draw[kernels2] (0,0) -- (1,1);
\draw (-1,1) node[xi] {} -- (-1,2.5) node[xiesf] {};
\draw (1,1)  node[xi] {} -- (1,2.5) node[xic] {};
}

\DeclareSymbol{2I1Xi4c}{2}{\draw[kernels2] (0,0) node[not] {} -- (-1,1);%60
\draw[kernels2] (0,0) -- (1,1) node[not] {};
\draw (-1,1)  node[xi] {} -- (-1,2.5) node[xi] {};
\draw[kernels2] (1,1) -- (0.4,2.5) node[xi] {};
\draw[kernels2] (1,1) -- (1.6,2.5) node[xi] {};
}

\DeclareSymbol{2I1Xi4cc1}{2}{\draw[kernels2] (0,0) node[not] {} -- (-1,1);%61
\draw[kernels2] (0,0) -- (1,1) node[not] {};
\draw (-1,1)  node[xic] {} -- (-1,2.5) node[xi] {};
\draw[kernels2] (1,1) -- (0.4,2.5) node[xic] {};
\draw[kernels2] (1,1) -- (1.6,2.5) node[xi] {};
}

\DeclareSymbol{2I1Xi4cc2}{2}{\draw[kernels2] (0,0) node[not] {} -- (-1,1);%62
\draw[kernels2] (0,0) -- (1,1) node[not] {};
\draw (-1,1)  node[xic] {} -- (-1,2.5) node[xic] {};
\draw[kernels2] (1,1) -- (0.4,2.5) node[xi] {};
\draw[kernels2] (1,1) -- (1.6,2.5) node[xi] {};
}

\DeclareSymbol{Xi4ba}{0}{\draw(-0.5,1.5) node[xi] {} -- (0,0); \draw (-1.5,1) node[xi] {} -- (0,0) node[not] {}; \draw[kernels2] (0,0) -- (1.5,1) node[xi] {};
\draw[kernels2] (0,0) -- (0.5,1.5) node[xi] {} ;}%63

\DeclareSymbol{Xi4badis}{0}{\draw(-0.5,1.5) node[xies] {} -- (0,0); \draw (-1.5,1) node[xies] {} -- (0,0) node[not] {}; \draw[kernels2] (0,0) -- (1.5,1) node[xies] {};
\draw[kernels2] (0,0) -- (0.5,1.5) node[xies] {} ;}%63

\DeclareSymbol{Xi4ba1}{0}{\draw(-0.5,1.5) node[xi] {} -- (0,0); \draw (-1.5,1) node[xi] {} -- (0,0) node[not] {}; \draw[kernels2] (0,0) -- (1.5,1) node[xic] {};
\draw[kernels2] (0,0) -- (0.5,1.5) node[xic] {} ;}%64

\DeclareSymbol{Xi4ba1b}{0}{\draw(-0.5,1.5) node[xic] {} -- (0,0); \draw (-1.5,1) node[xic] {} -- (0,0) node[not] {}; \draw[kernels2] (0,0) -- (1.5,1) node[xi] {};
\draw[kernels2] (0,0) -- (0.5,1.5) node[xi] {} ;}%64

\DeclareSymbol{Xi4ba1bdiff}{0}{\draw(-0.5,1.5) node[xic] {} -- (0,0); \draw (-1.5,1) node[xic] {} -- (0,0) node[not] {}; \draw (0,0) -- (1.5,1) node[xi] {};
\draw (0,0) -- (0.5,1.5) node[xi] {};
\draw(0,0) node[diff] {};}%64

\DeclareSymbol{Xi4ba1bb}{0}{\draw(-0.5,1.5) node[xic] {} -- (0,0); \draw (-1.5,1) node[xiesf] {} -- (0,0) node[not] {}; \draw[kernels2] (0,0) -- (1.5,1) node[xi] {};
\draw[kernels2] (0,0) -- (0.5,1.5) node[xi] {} ;}%64

\DeclareSymbol{Xi4ba2}{0}{\draw(-0.5,1.5) node[xi] {} -- (0,0); \draw (-1.5,1) node[xic] {} -- (0,0) node[not] {}; \draw[kernels2] (0,0) -- (1.5,1) node[xi] {};
\draw[kernels2] (0,0) -- (0.5,1.5) node[xic] {} ;}%65

\DeclareSymbol{Xi4ba2b}{0}{\draw(-0.5,1.5) node[xi] {} -- (0,0); \draw (-1.5,1) node[xic] {} -- (0,0) node[not] {}; \draw[kernels2] (0,0) -- (1.5,1) node[xi] {};
\draw[kernels2] (0,0) -- (0.5,1.5) node[xiesf] {} ;}%65

%\DeclareSymbol{Xi4c}{0}{\draw (0,1) -- (0.8,2.2) node[xi] {};\draw (0,-0.25) node[xi] {} -- (0,1) node[xi] {} -- (-0.8,2.2) node[xi] {};}%66

\DeclareSymbol{Xi4ca}{0}{\draw (0,1) -- (-1,2.2) node[xi] {};\draw (0,-0.25) node[xi] {} -- (0,1) ; \draw[kernels2] (0,1) node[not] {} -- (1,2.2) node[xi] {};%67
\draw[kernels2] (0,1) {} -- (0,2.7) node[xi] {};
}

\DeclareSymbol{Xi4cb}{0}{\draw (-1,1) -- (-2,2) node[xi] {};\draw[kernels2] (0,0)  -- (-1,1) node[xi] {} ; \draw[kernels2] (0,0) node[not] {} -- (1,1) node[xi] {} ; 
\draw (-1,1) node[xi] {} -- (0,2) node[xi] {};}

\DeclareSymbol{Xi4cbb}{0}{\draw (-1,1) -- (-2,2) node[xiesf] {};\draw[kernels2] (0,0)  -- (-1,1) node[xi] {} ; \draw[kernels2] (0,0) node[not] {} -- (1,1) node[xi] {} ; 
\draw (-1,1) node[xi] {} -- (0,2) node[xic] {};}

\DeclareSymbol{Xi4cbc1}{0}{\draw (-1,1) -- (-2,2) node[xic] {};\draw[kernels2] (0,0)  -- (-1,1) node[xic] {} ; \draw[kernels2] (0,0) node[not] {} -- (1,1) node[xi] {} ; 
\draw (-1,1) node[xic] {} -- (0,2) node[xi] {};}

\DeclareSymbol{Xi4cbc2}{0}{\draw (-1,1) -- (-2,2) node[xi] {};\draw[kernels2] (0,0)  -- (-1,1) node[xi] {} ; \draw[kernels2] (0,0) node[not] {} -- (1,1) node[xic] {} ; 
\draw (-1,1) node[xic] {} -- (0,2) node[xi] {};}

\DeclareSymbol{Xi4cab}{0}{\draw (-1,1) -- (-2,2) node[xi] {};\draw[kernels2] (0,0)  -- (-1,1); \draw[kernels2] (0,0) node[not] {} -- (1,1) node[xi] {} ; 
\draw[kernels2] (-1,1)  {} -- (0,2) node[xi] {};
\draw[kernels2] (-1,1) node[not] {} -- (-1,2.5) node[xi] {};
}%69

\DeclareSymbol{Xi4cabdis}{0}{\draw (-1,1) -- (-2,2) node[xies] {};\draw[kernels2] (0,0)  -- (-1,1); \draw[kernels2] (0,0) node[not] {} -- (1,1) node[xies] {} ; 
\draw[kernels2] (-1,1)  {} -- (0,2) node[xies] {};
\draw[kernels2] (-1,1) node[not] {} -- (-1,2.5) node[xies] {};
}%69

\DeclareSymbol{Xi4cabc1}{0}{\draw (-1,1) -- (-2,2) node[xi] {};\draw[kernels2] (0,0)  -- (-1,1); \draw[kernels2] (0,0) node[not] {} -- (1,1) node[xic] {} ; 
\draw[kernels2] (-1,1)  {} -- (0,2) node[xic] {};
\draw[kernels2] (-1,1) node[not] {} -- (-1,2.5) node[xi] {};
}%69

\DeclareSymbol{Xi4cabc2}{0}{\draw (-1,1) -- (-2,2) node[xic] {};\draw[kernels2] (0,0)  -- (-1,1); \draw[kernels2] (0,0) node[not] {} -- (1,1) node[xic] {} ; 
\draw[kernels2] (-1,1)  {} -- (0,2) node[xi] {};
\draw[kernels2] (-1,1) node[not] {} -- (-1,2.5) node[xi] {};
}%69

\DeclareSymbol{Xi4ea}{1.5}{\draw (-1,2.5) node[xi] {} -- (-1,1) node[xi] {} -- (0,0); %71
 \draw[kernels2] (0,0)  -- (1,1) node[xi] {};
\draw[kernels2] (0,0) node[not] {} -- (0,1.5) node[xi] {}; }

\DeclareSymbol{Xi4eac1}{1.5}{\draw (-1,2.5) node[xic] {} -- (-1,1) node[xi] {} -- (0,0); %72
 \draw[kernels2] (0,0)  -- (1,1) node[xic] {};
\draw[kernels2] (0,0) node[not] {} -- (0,1.5) node[xi] {}; }

\DeclareSymbol{Xi4eac1b}{1.5}{\draw (-1,2.5) node[xic] {} -- (-1,1) node[xi] {} -- (0,0); %72
 \draw[kernels2] (0,0)  -- (1,1) node[xiesf] {};
\draw[kernels2] (0,0) node[not] {} -- (0,1.5) node[xi] {}; }

\DeclareSymbol{Xi4eac2}{1.5}{\draw (-1,2.5) node[xic] {} -- (-1,1) node[xic] {} -- (0,0); %73
 \draw[kernels2] (0,0)  -- (1,1) node[xi] {};
\draw[kernels2] (0,0) node[not] {} -- (0,1.5) node[xi] {}; }

\DeclareSymbol{Xi4eact1}{1.5}{\draw (-1,2.5) node[xic] {} -- (-1,1) node[xi] {} -- (0,0); %74
 \draw (0,0)  -- (1,1) node[xic] {};
\draw[rho] (0,0) node[not] {} -- (0,1.5) node[xi] {}; }

\DeclareSymbol{Xi4eact2}{1.5}{\draw[rho] (-1,2.5) node[xic] {} -- (-1,1) node[xi] {} -- (0,0); %75
 \draw (0,0)  -- (1,1) node[xic] {};
\draw (0,0) node[not] {} -- (0,1.5) node[xi] {}; }

\DeclareSymbol{Xi4eabis}{1.5}{\draw (-1,2.5) node[xi] {} -- (-1,1) ; \draw[kernels2] (-1,1) node[xi] {} -- (0,0); 
 \draw (0,0)  -- (1,1) node[xi] {};%76
\draw[kernels2] (0,0) node[not] {} -- (0,1.5) node[xi] {}; }

\DeclareSymbol{Xi4eabisc1}{1.5}{\draw (-1,2.5) node[xic] {} -- (-1,1) ; \draw[kernels2] (-1,1) node[xi] {} -- (0,0); 
 \draw (0,0)  -- (1,1) node[xi] {};%77
\draw[kernels2] (0,0) node[not] {} -- (0,1.5) node[xic] {}; }

\DeclareSymbol{Xi4eabisc1b}{1.5}{\draw (-1,2.5) node[xic] {} -- (-1,1) ; \draw[kernels2] (-1,1) node[xi] {} -- (0,0); 
 \draw (0,0)  -- (1,1) node[xi] {};%77
\draw[kernels2] (0,0) node[not] {} -- (0,1.5) node[xiesf] {}; }

\DeclareSymbol{Xi4eabisc1bis}{1.5}{\draw (-1,2.5) node[xi] {} -- (-1,1) ; \draw[kernels2] (-1,1) node[xi] {} -- (0,0); 
 \draw (0,0)  -- (1,1) node[xi] {};%78
\draw[kernels2] (0,0) node[not] {} -- (0,1.5) node[xi] {};
\draw (-2,1) node[] {\tiny{$i$}};
\draw (-2,2.5) node[] {\tiny{$\ell$}};
\draw (2,1) node[] {\tiny{$k$}};
\draw (0,2.5) node[] {\tiny{$j$}};
 }

\DeclareSymbol{Xi4eabisc1tris}{1.5}{\draw (-1,2.5) node[xi] {} -- (-1,1) ; \draw[kernels2] (-1,1) node[xi] {} -- (0,0); 
 \draw (0,0)  -- (1,1) node[xi] {};%78
\draw[kernels2] (0,0) node[not] {} -- (0,1.5) node[xi] {};
\draw (-2,1) node[] {\tiny{i}};
\draw (-2,2.5) node[] {\tiny{j}};
\draw (2,1) node[] {\tiny{j}};
\draw (0,2.5) node[] {\tiny{i}};
 }

\DeclareSymbol{Xi4eabisc1quater}{1.5}{\draw (-1,2.5) node[xic] {} -- (-1,1) ; \draw[kernels2] (-1,1) node[xi] {} -- (0,0); 
 \draw (0,0)  -- (1,1) node[xic] {};%78
\draw[kernels2] (0,0) node[not] {} -- (0,1.5) node[xi] {};
 }

\DeclareSymbol{Xi4eabisc2}{1.5}{\draw (-1,2.5) node[xic] {} -- (-1,1) ; \draw[kernels2] (-1,1) node[xi] {} -- (0,0); %79
 \draw (0,0)  -- (1,1) node[xic] {};
\draw[kernels2] (0,0) node[not] {} -- (0,1.5) node[xi] {}; }

\DeclareSymbol{Xi4eabisc2l}{1.5}{\draw (-1,2.5) node[xiesf] {} -- (-1,1) ; \draw[kernels2] (-1,1) node[xi] {} -- (0,0); %79
 \draw (0,0)  -- (1,1) node[xic] {};
\draw[kernels2] (0,0) node[not] {} -- (0,1.5) node[xi] {}; }

\DeclareSymbol{Xi4eabisc2r}{1.5}{\draw (-1,2.5) node[xic] {} -- (-1,1) ; \draw[kernels2] (-1,1) node[xi] {} -- (0,0); %79
 \draw (0,0)  -- (1,1) node[xiesf] {};
\draw[kernels2] (0,0) node[not] {} -- (0,1.5) node[xi] {}; }

\DeclareSymbol{Xi4eabisc3}{1.5}{\draw (-1,2.5) node[xic] {} -- (-1,1) ; \draw[kernels2] (-1,1) node[xic] {} -- (0,0); %80
 \draw (0,0)  -- (1,1) node[xi] {};
\draw[kernels2] (0,0) node[not] {} -- (0,1.5) node[xi] {}; }

\DeclareSymbol{Xi4eb}{0}{%81
\draw[kernels2] (0,2) node[xi] {} -- (-1,1) ; \draw[kernels2] (-2,2)  node[xi] {} -- (-1,1) ; \draw (-1,1)  node[not] {} -- (0,0); 
 \draw (0,0) node[xi] {}  -- (1,1) node[xi] {};
}

\DeclareSymbol{Xi4eab}{1.5}{\draw[kernels2] (-1,2.5) node[xi] {} -- (-1,1) ; \draw[kernels2] (-2,2)  node[xi] {} -- (-1,1) ; \draw (-1,1)  node[not] {} -- (0,0); %82
 \draw[kernels2] (0,0)  -- (1,1) node[xi] {};
\draw[kernels2] (0,0) node[not] {} -- (0,1.5) node[xi] {}; 
}

\DeclareSymbol{Xi4eabdis}{1.5}{\draw[kernels2] (-1,2.5) node[xies] {} -- (-1,1) ; \draw[kernels2] (-2,2)  node[xies] {} -- (-1,1) ; \draw (-1,1)  node[not] {} -- (0,0); %82
 \draw[kernels2] (0,0)  -- (1,1) node[xies] {};
\draw[kernels2] (0,0) node[not] {} -- (0,1.5) node[xies] {}; 
}

\DeclareSymbol{Xi4eabc1}{1.5}{\draw[kernels2] (-1,2.5) node[xic] {} -- (-1,1) ; \draw[kernels2] (-2,2)  node[xi] {} -- (-1,1) ; \draw (-1,1)  node[not] {} -- (0,0); %83
 \draw[kernels2] (0,0)  -- (1,1) node[xic] {};
\draw[kernels2] (0,0) node[not] {} -- (0,1.5) node[xi] {}; 
}

\DeclareSymbol{Xi4eabc2}{1.5}{\draw[kernels2] (-1,2.5) node[xi] {} -- (-1,1) ; \draw[kernels2] (-2,2)  node[xi] {} -- (-1,1) ; \draw (-1,1)  node[not] {} -- (0,0); %84
 \draw[kernels2] (0,0)  -- (1,1) node[xic] {};
\draw[kernels2] (0,0) node[not] {} -- (0,1.5) node[xic] {}; 
}

\DeclareSymbol{Xi4eabbis}{1.5}{\draw[kernels2] (-1,2.5) node[xi] {} -- (-1,1) ; \draw[kernels2] (-2,2)  node[xi] {} -- (-1,1) ; \draw[kernels2] (-1,1)  node[not] {} -- (0,0); %85
 \draw (0,0)  -- (1,1) node[xi] {};
\draw[kernels2] (0,0) node[not] {} -- (0,1.5) node[xi] {}; 
}

\DeclareSymbol{Xi4eabbisc1}{1.5}{\draw[kernels2] (-1,2.5) node[xic] {} -- (-1,1) ; \draw[kernels2] (-2,2)  node[xi] {} -- (-1,1) ; \draw[kernels2] (-1,1)  node[not] {} -- (0,0); %86
 \draw (0,0)  -- (1,1) node[xic] {};
\draw[kernels2] (0,0) node[not] {} -- (0,1.5) node[xi] {}; 
}

\DeclareSymbol{Xi4eabbisc1perm}{1.5}{\draw[kernels2] (-1,2.5) node[xi] {} -- (-1,1) ; \draw[kernels2] (-2,2)  node[xic] {} -- (-1,1) ; \draw[kernels2] (-1,1)  node[not] {} -- (0,0); %86
 \draw (0,0)  -- (1,1) node[xic] {};
\draw[kernels2] (0,0) node[not] {} -- (0,1.5) node[xi] {}; 
}

\DeclareSymbol{Xi4eabbisc2}{1.5}{\draw[kernels2] (-1,2.5) node[xi] {} -- (-1,1) ; \draw[kernels2] (-2,2)  node[xi] {} -- (-1,1) ; \draw[kernels2] (-1,1)  node[not] {} -- (0,0); %87
 \draw (0,0)  -- (1,1) node[xic] {};
\draw[kernels2] (0,0) node[not] {} -- (0,1.5) node[xic] {}; 
}

\DeclareSymbol{Xi2cbis}{0}{\draw[kernels2] (0,1) -- (0.8,2.2) node[xi] {};\draw[kernels2] (0,-0.25) node[not] {} -- (0,1); \draw[kernels2] (0,1) node[not] {} -- (-0.8,2.2) node[xi] {};}%88

\DeclareSymbol{Xi2cbis1}{0}{\draw (0,1) -- (-0.8,2.2) node[xi] {};\draw[kernels2] (0,-0.25) node[not] {} -- (0,1) node[xi] {}; }%89

%\DeclareSymbol{Xi2X}{-2}{\draw (0,-0.25) node[xi] {} -- (-1,1) node[xix] {};}%90

\DeclareSymbol{Xi2Xbis}{-2}{\draw[kernels2] (0,-0.25)  -- (-1,1) ; \draw (-1,1) node[xix] {};%91
\draw[kernels2] (0,-0.25) node[not] {} -- (1,1) node[xi] {};}

\DeclareSymbol{XXi2bis}{-2}{\draw[kernels2] (0,-0.25) -- (-1,1) node[xi] {};%92
\draw[kernels2] (0,-0.25) node[X] {} -- (1,1) node[xi] {};}

\DeclareSymbol{I1XiIXi}{0}{\draw[kernels2] (0,-0.25) -- (1,1) node[xi] {};%93
\draw (0,-0.25) node[not] {} -- (-1,1) node[xi] {};}

\DeclareSymbol{I1XiIXib}{0}{\draw  (0,-0.25) node[xi] {} -- (0,1) node[not] {};
\draw[kernels2] (0,1) -- (0,2.25) ; \draw (0,2.25) node[xi]{}; }%94

\DeclareSymbol{I1XiIXic}{0}{%95
\draw[kernels2] (0,0) -- (1,1) node[xi] {} ; 
\draw[kernels2] (0,0) node[not] {}  -- (-1,1) node[not] {} -- (0,2) node[xi] {};
}

\DeclareSymbol{thin}{1.4}{\draw[pagebackground] (-0.3,0) -- (0.3,0); \draw  (0,0) -- (0,2);}
\DeclareSymbol{thin2}{1.4}{\draw[pagebackground] (-0.3,0) -- (0.3,0); \draw[tinydots]  (0,0) -- (0,2);}

\DeclareSymbol{thick}{1.4}{\draw[pagebackground] (-0.3,0) -- (0.3,0); \draw[kernels2]  (0,0) -- (0,2);}

\DeclareSymbol{thick2}{1.4}{\draw[pagebackground] (-0.3,0) -- (0.3,0); \draw[kernels2,tinydots]  (0,0) -- (0,2);}

\DeclareSymbol{Xi4ind}{2}{\draw (0,0) node[xi,label={[label distance=-0.2em]right: \scriptsize  $ i $}]  { } -- (-1,1) node[xi,label={[label distance=-0.2em]left: \scriptsize  $ j $}] {} -- (0,2) node[xi,label={[label distance=-0.2em]right: \scriptsize  $ k $}] {} -- (-1,3) node[xi,label={[label distance=-0.2em]left: \scriptsize  $ \ell $}] {};}%115

\DeclareSymbol{Xi4c1}{2}{\draw (0,0) node[xic] {} -- (-1,1) node[xi] {} -- (0,2) node[xic] {} -- (-1,3) node[xi] {};} %119
\DeclareSymbol{IXi2ex}{0}{\draw (0,-0.25) node[xie] {} -- (-1,1) node[xi] {} ; \draw (0,-0.25)-- (1,1) node[xi] {};}
\DeclareSymbol{IXi2ex1}{0}{\draw (0,-0.25) node[xie] {} -- (-1,1) node[xi] {} -- (0,2) node[xi] {};}

\DeclareSymbol{Xi4b1}{0}{\draw(0,1.5) node[xic] {} -- (0,0); \draw (-1,1) node[xic] {} -- (0,0) node[xi] {} -- (1,1) node[xi] {};}

\DeclareSymbol{Xi4ec1}{0}{\draw (0,2) node[xi] {} -- (-1,1) node[xic] {} -- (0,0) node[xic] {} -- (1,1) node[xi] {};}
\DeclareSymbol{Xi4ec2}{0}{\draw (0,2) node[xic] {} -- (-1,1) node[xi] {} -- (0,0) node[xic] {} -- (1,1) node[xi] {};}
\DeclareSymbol{Xi4ec3}{0}{\draw (0,2) node[xic] {} -- (-1,1) node[xic] {} -- (0,0) node[xi] {} -- (1,1) node[xi] {};}

\DeclareSymbol{I1Xi4ac1}{2}{\draw[kernels2] (0,0) node[not] {} -- (-1,1) ; \draw[kernels2] (0,0) node[not] {} -- (1,1) node[xic] {} ;
\draw (-1,1) node[xi] {} -- (0,2) node[xic] {} -- (-1,3) node[xi] {};}%161

\DeclareSymbol{I1Xi4ac2}{2}{\draw[kernels2] (0,0) node[not] {} -- (-1,1) ; \draw[kernels2] (0,0) node[not] {} -- (1,1) node[xic] {} ;
\draw (-1,1) node[xi] {} -- (0,2) node[xi] {} -- (-1,3) node[xic] {};}

\DeclareSymbol{I1Xi4bp}{2}{\draw (0,0) node[not] {} -- (-1,1) node[xi] {} -- (0,2) ; \draw[kernels2] (0,2) node[not] {} -- (-1,3) node[xi] {};\draw[kernels2] (0,2)  -- (1,3) node[xi] {};
}

\DeclareSymbol{I1Xi4bc1}{2}{\draw (0,0) node[xic] {} -- (-1,1) node[xi] {} -- (0,2) ; \draw[kernels2] (0,2) node[not] {} -- (-1,3) node[xi] {};\draw[kernels2] (0,2)  -- (1,3) node[xic] {};
}%165

\DeclareSymbol{I1Xi4bc2}{2}{\draw (0,0) node[xic] {} -- (-1,1) node[xi] {} -- (0,2) ; \draw[kernels2] (0,2) node[not] {} -- (-1,3) node[xic] {};\draw[kernels2] (0,2)  -- (1,3) node[xi] {};
}

\DeclareSymbol{I1Xi4cp}{2}{\draw (0,0) node[not] {} -- (-1,1) node[not] {}; \draw[kernels2] (-1,1) -- (0,2) ; 
\draw[kernels2] (-1,1) -- (-2,2) node[xi] {} ;
\draw (0,2) node[xi] {} -- (-1,3) node[xi] {};}

\DeclareSymbol{I1Xi4cc1}{2}{\draw (0,0) node[xic] {} -- (-1,1) node[not] {}; \draw[kernels2] (-1,1) -- (0,2) ; 
\draw[kernels2] (-1,1) -- (-2,2) node[xi] {} ;
\draw (0,2) node[xic] {} -- (-1,3) node[xi] {};}%169

\DeclareSymbol{I1Xi4cc2}{2}{\draw (0,0) node[xic] {} -- (-1,1) node[not] {}; \draw[kernels2] (-1,1) -- (0,2) ; 
\draw[kernels2] (-1,1) -- (-2,2) node[xi] {} ;
\draw (0,2) node[xi] {} -- (-1,3) node[xic] {};}

\DeclareSymbol{I1Xi4abc1}{2}{\draw[kernels2] (0,0) node[not] {} -- (-1,1) ; \draw[kernels2] (0,0) node[not] {} -- (1,1) node[xic] {};\draw (-1,1) node[xi] {} -- (0,2) ; \draw[kernels2] (0,2) node[not] {} -- (-1,3) node[xic] {};\draw[kernels2] (0,2)  -- (1,3) node[xi] {}; }

\DeclareSymbol{I1Xi4abc2}{2}{\draw[kernels2] (0,0) node[not] {} -- (-1,1) ; \draw[kernels2] (0,0) node[not] {} -- (1,1) node[xic] {};\draw (-1,1) node[xi] {} -- (0,2) ; \draw[kernels2] (0,2) node[not] {} -- (-1,3) node[xi] {};\draw[kernels2] (0,2)  -- (1,3) node[xic] {}; }%173

\DeclareSymbol{R1}{0}{\draw (-1,1) node[xi] {} -- (0,0) node[not] {};%131
\draw[kernels2] (0,1.5) node[xic] {} -- (0,0) -- (1,1) node[xic] {};}
\DeclareSymbol{R2}{0}{\draw (-1,1) node[xic] {} -- (0,0) node[not] {};
\draw[kernels2] (0,1.5)  {} -- (0,0) -- (1,1)  {};
\draw (0,1.5) node[xi] {};
\draw (1,1) node[xic] {};
}%132
\DeclareSymbol{R3}{1}{\draw[kernels2] (-1,1.5)  {} -- (0,0) node[not] {} -- (1,1.5);%133
\draw (-1,1.5) node[xi] {};
\draw[kernels2] (0,3) {} -- (1,1.5) -- (2,3)  {};
\draw  (0,3) node[xic] {} ;
\draw (2,3) node[xic] {};}
\DeclareSymbol{R4}{1}{\draw[kernels2] (-1,1.5) node[xic] {} -- (0,0) node[not] {} -- (1,1.5);%134
\draw[kernels2] (0,3) {} -- (1,1.5) -- (2,3) node[xic] {};
\draw (0,3) node[xi] {};}

\DeclareSymbol{I1Xi4bcp}{2}{\draw (0,0) node[not] {} -- (-1,1) node[not] {}; \draw[kernels2] (-1,1) -- (0,2) ; %175
\draw[kernels2] (-1,1) -- (-2,2) node[xi] {} ; \draw[kernels2] (0,2) node[not] {} -- (-1,3) node[xi] {};\draw[kernels2] (0,2)  -- (1,3) node[xi] {};
}

\DeclareSymbol{I1Xi4bcc1}{2}{\draw (0,0) node[xic] {} -- (-1,1) node[not] {}; \draw[kernels2] (-1,1) -- (0,2) ; 
\draw[kernels2] (-1,1) -- (-2,2) node[xi] {} ; \draw[kernels2] (0,2) node[not] {} -- (-1,3) node[xi] {};\draw[kernels2] (0,2)  -- (1,3) node[xic] {};
}

\DeclareSymbol{I1Xi4bcc2}{2}{\draw (0,0) node[xic] {} -- (-1,1) node[not] {}; \draw[kernels2] (-1,1) -- (0,2) ; 
\draw[kernels2] (-1,1) -- (-2,2) node[xi] {} ; \draw[kernels2] (0,2) node[not] {} -- (-1,3) node[xic] {};\draw[kernels2] (0,2)  -- (1,3) node[xi] {};
} %177

\DeclareSymbol{2I1Xi4bc1}{2}{\draw[kernels2] (0,0) node[not] {} -- (-1,1) ;
\draw[kernels2] (0,0) -- (1,1);
\draw (-1,1) node[xic] {} -- (-1,2.5) node[xi] {};
\draw (1,1)  node[xic] {} -- (1,2.5) node[xi] {};
}

\DeclareSymbol{2I1Xi4bc2}{2}{\draw[kernels2] (0,0) node[not] {} -- (-1,1) ;
\draw[kernels2] (0,0) -- (1,1);
\draw (-1,1) node[xi] {} -- (-1,2.5) node[xic] {};
\draw (1,1)  node[xic] {} -- (1,2.5) node[xi] {};
}

\DeclareSymbol{diff2I1Xi4bc2}{2}{\draw (0,0) node[diff] {} -- (-1,1) ;
\draw (0,0) -- (1,1);
\draw (-1,1) node[xi] {} -- (-1,2.5) node[xic] {};
\draw (1,1)  node[xic] {} -- (1,2.5) node[xi] {};
}

\DeclareSymbol{2I1Xi4bc3}{2}{\draw[kernels2] (0,0) node[not] {} -- (-1,1) ;
\draw[kernels2] (0,0) -- (1,1);
\draw (-1,1) node[xic] {} -- (-1,2.5) node[xic] {};
\draw (1,1)  node[xi] {} -- (1,2.5) node[xi] {};
}

\DeclareSymbol{Xi41}{0}{\draw (0,1) -- (0.8,2.2) node[xic] {};\draw (0,-0.25) node[xi] {} -- (0,1) node[xi] {} -- (-0.8,2.2) node[xic] {};} 

\DeclareSymbol{Xi42}{0}{\draw (0,1) -- (0.8,2.2) node[xi] {};\draw (0,-0.25) node[xic] {} -- (0,1) node[xi] {} -- (-0.8,2.2) node[xic] {};}

\DeclareSymbol{Xi4ca1}{0}{\draw (0,1) -- (-1,2.2) node[xic] {};\draw (0,-0.25) node[xi] {} -- (0,1) ; \draw[kernels2] (0,1) node[not] {} -- (1,2.2) node[xic] {};
\draw[kernels2] (0,1) {} -- (0,2.7) node[xi] {};
}

\DeclareSymbol{Xi4ca2}{0}{\draw (0,1) -- (-1,2.2) node[xi] {};\draw (0,-0.25) node[xi] {} -- (0,1) ; \draw[kernels2] (0,1) node[not] {} -- (1,2.2) node[xic] {};
\draw[kernels2] (0,1) {} -- (0,2.7) node[xic] {};
}

\DeclareSymbol{Xi4cap}{0}{\draw (0,1) -- (-1,2.2) node[xi] {};\draw (0,-0.25) node[not] {} -- (0,1) ; \draw[kernels2] (0,1) node[not] {} -- (1,2.2) node[xi] {};
\draw[kernels2] (0,1) {} -- (0,2.7) node[xi] {};
}

\DeclareSymbol{Xi3a}{0}{
 \draw (-1,1)  node[xi] {} -- (0,0); 
 \draw (0,0) node[xi] {}  -- (1,1) node[xi] {};
}

\DeclareSymbol{Xi4ebc1}{0}{
\draw[kernels2] (0,2) node[xi] {} -- (-1,1) ; \draw[kernels2] (-2,2)  node[xic] {} -- (-1,1) ; \draw (-1,1)  node[not] {} -- (0,0); 
 \draw (0,0) node[xic] {}  -- (1,1) node[xi] {};
}

\DeclareSymbol{Xi4ebc2}{0}{
\draw[kernels2] (0,2) node[xi] {} -- (-1,1) ; \draw[kernels2] (-2,2)  node[xi] {} -- (-1,1) ; \draw (-1,1)  node[not] {} -- (0,0); 
 \draw (0,0) node[xic] {}  -- (1,1) node[xic] {};
}

\DeclareSymbol{Xi2cbispex}{0}{\draw[kernels2] (0,1) -- (0.8,2.2) node[xi] {};\draw (0,-0.25) node[xie] {} -- (0,1); \draw[kernels2] (0,1) node[not] {} -- (-0.8,2.2) node[xi] {};}

\DeclareSymbol{Xi2cbis1p}{0}{\draw (0,1) -- (-0.8,2.2) node[xi] {};\draw (0,-0.25) node[not] {} -- (0,1) node[xi] {}; }

\DeclareSymbol{Xi2Xp}{-2}{\draw (0,-0.25) node[not] {} -- (-1,1) node[xix] {};} % 229 not used

\DeclareSymbol{I1XiIXib}{0}{\draw  (0,-0.25) node[xi] {} -- (0,1) node[not] {};
\draw[kernels2] (0,1) -- (0,2.25) ; \draw (0,2.25) node[xi]{}; }

\DeclareSymbol{IXi2b}{0}{\draw  (0,-0.25) node[xi] {} -- (0,1) node[not] {};
\draw (0,1) -- (0,2.25) ; \draw (0,2.25) node[xi]{}; }

\DeclareSymbol{IXi2bex}{0}{\draw  (0,-0.25) node[xi] {} -- (0,1) node[xie] {};
\draw (0,1) -- (0,2.25) ; \draw (0,2.25) node[xi]{}; }

 \def\1{\mathbf{\symbol{1}}}

\def\one{\mathbf{1}}

\DeclareSymbol{diff}{0}{
\draw (0,0.5) node[diff] {};
}

\DeclareSymbol{diff1}{0}{
\draw (0,0.5) node[diff1] {};
}

\DeclareSymbol{diff2}{0}{
\draw (0,0.5) node[diff2] {};
}

\DeclareSymbol{geo}{0}{
\draw (0,0) node[diff] {};
\draw (0.3,0) node[diff] {};
}

\DeclareSymbol{generic}{0}{
\draw (0,0.6) node[xi] {};
}

\DeclareSymbol{g}{0}{
\draw (0,0.6) node[g] {};
}

\DeclareSymbol{Ito}{0}{
\draw (0,0.6) node[xies] {};
}

\DeclareSymbol{Itob}{0}{
\draw (0,0.6) node[xiesf] {};
}

\DeclareSymbol{greycirc}{0}{
\draw (0,0.3) node[xi] {};
}

\DeclareSymbol{not}{0}{
\draw (0,0.6) node[not] {};
\draw[tinydots] (0,0.6) circle (0.8);
}

\DeclareSymbol{genericb}{0}{
\draw (0,0.6) node[xic] {};
}

\DeclareSymbol{bluecirc}{0}{
\draw (0,0.3) node[xic] {};
}

\DeclareSymbol{genericxix}{0}{
\draw (0,0.6) node[xix] {};
}

\DeclareSymbol{genericX}{0}{
\draw (0,0.6) node[X] {};
}

\DeclareSymbol{diffIto}{1}{
\draw  (0,2.5) -- (0,0) ;
\draw (0,-0.1) node[diff] {};
\draw (0,2.5) node[xies] {};
}
\DeclareSymbol{Itodiff}{2}{
\draw(0,2.9) -- (0,-0.2);
\draw (0,2.9) node[diff] {};
\draw (0,-0.1) node[xies] {};
}

\DeclareSymbol{diffgeneric}{1}{
\draw  (0,2.5) -- (0,0) ;
\draw (0,-0.1) node[diff] {};
\draw (0,2.5) node[xi] {};
}

\DeclareSymbol{genericdiff}{2}{
\draw(0,2.9) -- (0,-0.2);
\draw (0,2.9) node[diff] {};
\draw (0,-0.1) node[xi] {};
}

\DeclareSymbol{diffdot}{2}{
\draw  (0,3) -- (0,-0.1) ;
\draw (0,3) node[not] {};
\draw (0,-0.1) node[diff] {};
}

\DeclareSymbol{diffdotmini}{0}{
\draw  (0,0) -- (0,1.2) ;
\draw (0,1.2) node[not] {};
\draw (0,0) node[diffmini] {};
}

\DeclareSymbol{dotdiff}{2}{
\draw[kernelsmod]  (0,3) -- (0,-0.1) ;
\draw (0,3) node[diff] {};
\draw (0,-0.1) node[not] {};
}

\DeclareSymbol{dotdiff1}{2}{
\draw[kernelsmod]  (0,3) -- (0,-0.1) ;
\draw (0,3) node[diff1] {};
\draw (0,-0.1) node[not] {};
}

\DeclareSymbol{dotdiff1mini}{0}{
\draw[kernelsmod]  (0,1.2) -- (0,0) ;
\draw (0,1.2) node[diffmini] {};
\draw (0,0) node[not] {};
}

\DeclareSymbol{dotdiff2}{2}{
\draw (0,3) -- (0,-0.1) ;
\draw (0,3) node[diff] {};
\draw (0,-0.1) node[not] {};
}

\DeclareSymbol{dotdiff2mini}{0}{
\draw (0,1.2) -- (0,0) ;
\draw (0,1.2) node[diffmini] {};
\draw (0,0) node[not] {};
}

\DeclareSymbol{dotdiffstraight}{0}{
\draw  (0,3) -- (0,-0.1) ;
\draw (0,3) node[diff] {};
\draw (0,-0.1) node[not] {};
}

\DeclareSymbol{arbre1}{0}{
\draw  (0,0) -- (1.5,1.5) ;
\draw (1.5,1.5) node[not] {};
\draw (0,0) node[not] {};
}

\DeclareSymbol{arbre2}{0}{
\draw  (0,0) -- (1.5,1.5) ;
\draw[kernelsmod] (0,0) -- (-1.5,1.5);
\draw (1.5,1.5) node[not] {};
\draw (0,0) node[not] {};
\draw (-1.5,1.5) node[xi] {};
}

\DeclareSymbol{arbre3}{0}{
\draw  (0,0) -- (1.5,1.5) ;
\draw[kernelsmod] (1.5,1.5) -- (0,3);
\draw (0,0) node[not] {};
\draw (1.5,1.5) node[not] {};
\draw (0,3) node[xi] {};
}

\DeclareSymbol{treeeval}{0}{
\draw (0,0) -- (1,1);
\draw (0,0) node[xi] {};
\draw (1.25,1.25) node[xi] {};
\draw (-0.6,0.6) node[]{\tiny{$i$}};
\draw (0.65,1.85) node[]{\tiny{$j$}};
}

\DeclareSymbol{testeval}{0}{
\draw (0,0) -- (1,1);
\draw (0,0) -- (-1,1);
\draw (0,0) node[xi] {};
\draw (1.25,1.25) node[xi] {};
\draw (-1.25,1.25) node[xi] {};
\draw (-0.6,-0.6) node[]{\tiny{$i$}};
\draw (0.65,1.85) node[]{\tiny{$j$}};
\draw (-1.95,1.85) node[]{\tiny{$k$}};
}

\DeclareSymbol{treeeval2}{0}{
\draw[kernelsmod] (-0.25,-1) -- (1,0.5) ;
\draw[kernelsmod] (1,0.5) -- (-0.25,2);
\draw (1,0.5) node[diff2] {};
\draw (-0.25,-1) node[not] {};
\draw (-0.25,2) node[xi] {};
\draw (-0.6,1.2) node[]{\tiny{1}};
}

\DeclareSymbol{arbreact}{1}{
\draw (0,0) node[not] {};
\draw[kernelsmod] (0,0) -- (1,1);
\draw[kernelsmod] (0,0) -- (-1,1);
\draw (-1,1) node[xic] {};
\draw  (0,2) -- (1,1) ;
\draw (0,2) node[xic] {};
\draw (1,1) node[xi] {};
}

\DeclareSymbol{arbreact1}{0}{
\draw (0,-1.5) -- (0,0);
\draw[kernelsmod] (0,0) -- (1,1);
\draw[kernelsmod] (0,0) -- (-1,1);
\draw  (0,2) -- (1,1) ;
\draw (0,-1.5) node[diff] {};
\draw (0,0) node[not] {};
\draw (-1,1) node[xic] {};
\draw (0,2) node[xic] {};
\draw (1,1) node[xi] {};
}

\DeclareSymbol{arbreact2}{0}{
\draw (0,-0.75) -- (-1,0.5); 
\draw (0,-0.75) -- (1,0.5);
\draw (0,1.5) -- (1,0.5);
\draw (0,1.5) node[xic] {};
\draw (1,0.5) node[xi] {};
\draw (-1,0.5) node[xic] {};
\draw (0,-0.75) node[diff] {};
}

\DeclareSymbol{arbreact3}{0}{
\draw[kernelsmod] (0,-0.75) -- (-1,0.5); 
\draw[kernelsmod] (0,-0.75) -- (1,0.5);
\draw (0,1.75) -- (1,0.5);
\draw (2,1.75) -- (1,0.5);
\draw (0,1.75) node[xic] {};
\draw (1,0.5) node[diff] {};
\draw (-1,0.5) node[xic] {};
\draw (2,1.75) node[xi] {};
\draw (0,-0.75) node[not] {};
}

\DeclareSymbol{pre_im_I1Xitwo}{0}{
\draw[kernels2] (0,-0.3) node[not] {} -- (-0.6,0.7) ;
\draw[kernels2] (0,-0.3) -- (0.6,0.7);
\draw (0,0.9) node[g] {};
}

\DeclareSymbol{pre_im_cI1Xi4ab}{2}{
\draw[kernels2] (0,-1) node[not] {} -- (-0.6,0) ;
\draw[kernels2] (0,-1) -- (0.6,0);
\draw (0,0.2) node[g] {};
\draw (0,0.6) -- (0,1.5);
\draw[kernels2] (0,1.5) node[not] {} -- (-0.6,2.5) ;
\draw[kernels2] (0,1.5) -- (0.6,2.5);
\draw (0,2.7) node[g] {};
}%46

\DeclareSymbol{pre_im_I1Xi4acc2}{0}{
\draw[kernels2] (-1,-0.5) node[not] {} -- (-1.6,0.5) ;
\draw[kernels2] (-1,-0.5) -- (-0.4,0.5);
\draw[kernels2] (-1,-0.5) -- (0.2,-1.5) node[not] {} ;
\draw (-1,1.1) -- (-1,2);
\draw[kernels2] (0.2,-1.5) -- (0.2,2);
\draw (-1,0.7) node[g] {};
\draw (-0.3,2.2) node[g] {};
}

\DeclareSymbol{pre_im_I1Xi4abcc2}{2}{
\draw[kernels2] (0,-1) node[not] {} -- (-1,0) node[not] {};
\draw[kernels2] (-1,1.2) node[not] {} -- (-1,0);
\draw[kernels2] (-1,1.2) -- (-1.5,2.5);
\draw[kernels2] (-1,1.2) -- (-0.5,2.5);
\draw[kernels2] (-1,0) -- (0.7,2.5);
\draw[kernels2] (0,-1) -- (1.5,2.5);
\draw (-1,2.7) node[g] {};
\draw (1,2.7) node[g] {};
}

\DeclareSymbol{pre_im_2I1Xi4c1}{2}{
\draw[kernels2] (0,-0.5) node[not] {} -- (-1,0.5) node[not] {};
\draw[kernels2] (0,-0.5) -- (1,0.5) node[not] {};
\draw[kernels2] (-1,0.5) node[not] {}-- (-1.7,2);
%\draw[kernels2]  (-0.7,1.5) -- (-0.7,1) -- (0,1) -- (0,2.5) -- (2.4,2.5) -- (2.4,1) -- (1,0);
\draw[kernels2]  (-1,2) -- (1,0.5);
\draw[kernels2] (-1,0.5) -- (1,2);
\draw[kernels2] (1,0.5) -- (1.7,2);
\draw (-1.2,2.2) node[g] {};
\draw (1.2,2.2) node[g] {};
}

%\DeclareSymbol{pre_im_2I1Xi4c1}{2}{
%\draw[kernels2] (0,-1) node[not] {} -- (-0.1,0) node[not] {};
%\draw[kernels2] (0,-1) -- (1,0) node[not] {};
%\draw[kernels2] (-0.1,0) node[not] {}-- (-1.7,1.5);
%%\draw[kernels2]  (-0.7,1.5) -- (-0.7,1) -- (0,1) -- (0,2.5) -- (2.4,2.5) -- (2.4,1) -- (1,0);
%\draw[kernels2]  (-0.7,1.5) -- (1,0);
%\draw[kernels2] (-0.1,0) -- (0.7,1.5);
%\draw[kernels2] (1,0) -- (1.7,1.5);
%\draw (-1.2,1.7) node[g] {};
%\draw (1.2,1.7) node[g] {};
%}

\DeclareSymbol{pre_im_Xi4eabisc2}{2}{
\draw[kernels2] (1.2,-0.5) node[not] {} -- (-0.7,0.8) ;
\draw[kernels2] (1.2,-0.5) -- (0.4,0.8);
\draw (0,1.4)  -- (0,2.2);
\draw (1.2,2.2) -- (1.2,-0.6);
\draw (0,1) node[g] {};
\draw (0.6,2.4) node[g] {};
}

\DeclareSymbol{pre_im_Xi4eabisc22}{2}{
\draw (1.2,-0.5) node[not] {} -- (-0.7,0.8) ;
\draw[kernels2] (1.2,-0.5) -- (0.4,0.8);
\draw (0,1.4)  -- (0,2.2);
\draw[kernels2] (1.2,2.2) -- (1.2,-0.6);
\draw (0,1) node[g] {};
\draw (0.6,2.4) node[g] {};
}

\DeclareSymbol{pre_im_Xi4eabisc222}{2}{
\draw[kernels2] (0.4,-0.5) node[not] {} -- (-0.6,1) ;
\draw[kernels2] (1.2,0) -- (0.3,1);
\draw (0,1.1)  -- (0,2.5);
\draw[kernels2] (1.2,2.5) -- (1.2,0) node[not] {} -- (0.4,-0.6);
\draw (0,1.2) node[g] {};
\draw (0.6,2.5) node[g] {};
}

\DeclareSymbol{pre_im_Xi4eabc2}{2}{
\draw (0,-0.5) node[not] {} -- (-1,0.5) node[not] {};
\draw[kernels2] (-1,0.5) -- (-1.5,2);
\draw[kernels2] (0,-0.5)  -- (0.7,2);
\draw[kernels2] (-1,0.5) -- (-0.5,2);
\draw[kernels2] (0,-0.5) -- (1.5,2);
\draw (-1,2.2) node[g] {};
\draw (1,2.2) node[g] {};
}

\DeclareSymbol{pre_im_Xi4eabbisc2}{2}{
\draw[kernels2] (0,-0.5) node[not] {} -- (-1,0.5) node[not] {};
\draw[kernels2] (-1,0.5) -- (-1.5,2);
\draw[kernels2] (0,-0.5)  -- (0.7,2);
\draw[kernels2] (-1,0.5) -- (-0.5,2);
\draw (0,-0.5) -- (1.5,2);
\draw (-1,2.2) node[g] {};
\draw (1,2.2) node[g] {};
}

\DeclareSymbol{pre_im_I1Xi4abcc1}{2}{
\draw[kernels2] (0,-1) node[not] {} -- (-1,0) node[not] {};
\draw[kernels2] (0,1.1) node[not] {} -- (-1,0);
\draw[kernels2] (-1,0) -- (-1.5,2.5);
\draw[kernels2] (0,1.1) node[not] {} -- (-0.5,2.5);
\draw[kernels2] (0,1.1) -- (0.5,2.5);
\draw[kernels2] (0,-1) -- (1.5,2.5);
\draw (-1,2.7) node[g] {};
\draw (1,2.7) node[g] {};
}

\DeclareSymbol{pre_im_Xi4eabc1}{2}{
\draw (0,-0.5) node[not] {} -- (-1,0.5) node[not] {};
\draw[kernels2] (-1,0.5) -- (-1.5,2);
\draw[kernels2] (0,-0.5)  -- (-0.5,2);
\draw[kernels2] (-1,0.5) -- (0.8,2);
\draw[kernels2] (0,-0.5) -- (1.5,2);
\draw (-1,2.2) node[g] {};
\draw (1,2.2) node[g] {};
}

\DeclareSymbol{pre_im_Xi4ba1b}{2}{
\draw[kernels2] (0,0) node[not] {}  -- (1.8,1.5);
\draw[kernels2] (0,0) -- (0.8,1.5);
\draw (0,-0.1) -- (-1.8,1.5);
\draw (0,-0.1) -- (-0.8,1.5);
\draw (-1,1.7) node[g] {};
\draw (1,1.7) node[g] {};
}

\DeclareSymbol{pre_im_Xi4ba2}{2}{
\draw (0,-0.1) node[not] {}  -- (1.8,1.5);
\draw[kernels2] (0,0) -- (0.8,1.5);
\draw (0,-0.1) -- (-1.8,1.5);
\draw[kernels2] (0,0) -- (-0.8,1.5);
\draw (-1,1.7) node[g] {};
\draw (1,1.7) node[g] {};
}

\DeclareSymbol{pre_im_Xi4cabc2}{2}{
\draw[kernels2] (0,-0.5) node[not] {} -- (-1,0.5) node[not] {};
\draw[kernels2] (-1,0.5) -- (-1.5,2);
\draw (-1,0.5)  -- (0.7,2);
\draw[kernels2] (-1,0.5) -- (-0.5,2);
\draw[kernels2] (0,-0.5) -- (1.5,2);
\draw (-1,2.2) node[g] {};
\draw (1,2.2) node[g] {};
}

\DeclareSymbol{pre_im_Xi4cabc1}{2}{
\draw[kernels2] (0,-0.5) node[not] {} -- (-1,0.5) node[not] {};
\draw (-1,0.5) -- (-1.5,2);
\draw[kernels2] (-1,0.5)  -- (0.7,2);
\draw[kernels2] (-1,0.5) -- (-0.5,2);
\draw[kernels2] (0,-0.5) -- (1.5,2);
\draw (-1,2.2) node[g] {};
\draw (1,2.2) node[g] {};
}

\DeclareSymbol{pre_im_Xi4eabbisc1}{2}{
\draw[kernels2] (0,-0.5) node[not] {} -- (-1,0.5) node[not] {};
\draw[kernels2] (-1,0.5) -- (-1.5,2);
\draw[kernels2] (0,-0.5)  -- (-0.5,2);
\draw[kernels2] (-1,0.5) -- (0.8,2);
\draw (0,-0.5) -- (1.5,2);
\draw (-1,2.2) node[g] {};
\draw (1,2.2) node[g] {};
}

\DeclareSymbol{pre_im_1}{0}{
\draw[kernels2] (0,-0.5) node[not] {} -- (-0.6,0.5) ;
\draw[kernels2] (0,-0.5) -- (0.6,0.5);
\draw (0,1.1)  -- (-0.55,2);
\draw (0,1.1)  -- (0.55,2);
\draw (0,0.7) node[g] {};
\draw (0,2.2) node[g] {};
}

\DeclareSymbol{disconnect}{0}{
\draw[kernels2] (0,-0.5) node[not] {} -- (-0.6,0.5) ;
\draw[kernels2] (0,-0.5) -- (0.6,0.5);
\draw (-0.55,1.1)  -- (-0.55,2.3);
\draw (0.55,2.3) -- (0.55,1.5) -- (1.2,1.5) -- (1.2,3.5) -- (0.55,3.5) -- (0.55,2.7);
\draw (0,0.7) node[g] {};
\draw (0,2.5) node[g] {};
}

\DeclareSymbol{pre_im_2}{2}{\draw[kernels2] (0,0) node[not] {} -- (-1,1) node[not] {};
\draw[kernels2] (0,0) -- (1,1) node[not] {};
\draw[kernels2] (-1,1) -- (-1.5,2.5);
\draw[kernels2] (-1,1) -- (-0.5,2.5);
\draw[kernels2] (1,1) -- (0.5,2.5);
\draw[kernels2] (1,1) -- (1.5,2.5);
\draw (-1,2.7) node[g] {};
\draw (1,2.7) node[g] {};
}

\DeclareSymbol{CX_rec}{0}{
\draw [black] (-0.3,1) to (-0.3,-0.3);
\draw [black] (0.3,1) to (0.3,-0.3);
\draw [black] (-0.3,1) to (-0.3,2.3);
\draw [black] (0.3,1) to (0.3,2.3);
\draw (0,1) node[rec] {};
}

\DeclareSymbol{CX_cerc}{0}{
\draw [black] (0,1) to (0,-0.3);
\draw (0,1) node[cerc] {};
}

%setting the heading and footer

\pagenumbering{arabic}

\DeclareMathAlphabet{\mathpzc}{OT1}{pzc}{m}{it}

%%%%%%%%%%%%%%%%%%%%%%%%%%%%%%%%%%%%%%%%%%%%%%%%%%%%%%%%
%
%
%              End of the tikz code for trees
%
%
%%%%%%%%%%%%%%%%%%%%%%%%%%%%%%%%%%%%%%%%%%%%%%%%%%%%%%%%

\def\eqref#1{(\ref{#1})}

\makeatletter % Stolen from the internet to make a fat \cdot which isn't as fat as a \bullet
\newcommand*{\bigcdot}{}% Check if undefined
\DeclareRobustCommand*{\bigcdot}{%
  \mathbin{\mathpalette\bigcdot@{}}%
}
\newcommand*{\bigcdot@scalefactor}{.5}
\newcommand*{\bigcdot@widthfactor}{1.15}
\newcommand*{\bigcdot@}[2]{%
  % #1: math style
  % #2: unused
  \sbox0{$#1\vcenter{}$}% math axis
  \sbox2{$#1\cdot\m@th$}%
  \hbox to \bigcdot@widthfactor\wd2{%
    \hfil
    \raise\ht0\hbox{%
      \scalebox{\bigcdot@scalefactor}{%
        \lower\ht0\hbox{$#1\bullet\m@th$}%
      }%
    }%
    \hfil
  }%
}
\makeatother

 %Added%

\tcbset
{colframe=boxcolor,colback=symbols!7!pagebackground,coltext=pageforeground,
fonttitle=\bfseries,nobeforeafter,center title,size=fbox,boxsep=1.5pt,
top=0mm,bottom=0mm,boxsep=0mm,tcbox raise base}

\def\two{{\<generic>\kern0.05em\<genericb>}}
\def\twoI{{\<Ito>\kern0.05em\<Itob>}}

\def\mail#1{\burlalt{#1}{mailto:#1}}

\usepackage{thmtools} %Added%

\declaretheorem[style=definition]{example}

\begin{document}

\title{Derivation of resonance-based schemes via normal forms}
\author{Yvain Bruned}
\institute{ 
 Université de Lorraine, CNRS, IECL, F-54000 Nancy, France
  \\
Email:\ \begin{minipage}[t]{\linewidth}
\mail{yvain.bruned@univ-lorraine.fr}.
\end{minipage}}

\maketitle

\begin{abstract}
In this work, we propose a systematic derivation of resonance-based schemes via normal forms. The main idea is to use an arborification map on decorated trees together with a Butcher-Connes-Kreimer type coproduct and lower-dominant parts decompositions of the Fourier operator coming from the nonlinear interactions. This new family of low regularity schemes has explicit  formulae for its coefficients and its local error. Under a mild assumption, one could expect these schemes to have a similar local error as the low regularity schemes proposed in   \cite{BS}. 
\\[.4em]
\noindent {\scriptsize \textit{Keywords:} Arborification, Decorated trees, Dispersive PDEs, Low regularity schemes, Normal forms. }\\
\noindent {\scriptsize\textit{MSC classification:} : Primary – 60L70, 35Q55, 37J40.}
\end{abstract}

\setcounter{tocdepth}{1}
\tableofcontents

\section{Introduction}

 Resonance-based schemes first proposed in \cite{HS17, OS18} on some dispersive equations, such as the cubic nonlinear Schrödinger equation (NLS) and the Korteweg-de Vries equation (KdV), were able to provide numerical schemes in time requiring less regularity on the solutions than classical integrators, splitting schemes \cite{HLW06,HO10} will need.
 The main idea is to embed the resonance into the discretisation together with a good implementation.
 Such an approach has been generalised with the help of a suitable set of decorated trees together with Hopf algebraic tools for understanding the local error of the scheme in \cite{BS}.
 This formalism draws its inspiration from decorated trees used in singular SPDEs via the theory of Regularity Structures \cite{reg,BHZ,BCCH} where new B-series describe the local expansion of the solutions. Part of the formalism is also coming from the decorated trees that appear for studying analytical properties of these equations, looking at iterated integrals obtained from the Duhamel's formulation (see \cite{C07,Gub06,Gub11,GKO13}). The Hopf algebraic part also relies on variants of the Butcher-Connes-Kreimer coproduct \cite{Butcher,CK1}.
 Let us mention that this general resonance approach has been extended beyond the case with periodic boundary conditions \cite{FS,ABBS22}, it has been applied for Wave turbulence discretising the second moment of the solution \cite{ABBS24} and to the stochastic context \cite{AGB23}. One can also produce schemes that are symmetric \cite{ABBMS23} and symplectic \cite{MS25,AGB24}.
 
 This resonance approach is reminiscent of the well-established normal forms approach for dispersive PDEs where the main normal forms are Birkhoff normal form \cite{Bam03,BG06,BG22} and the Poincaré-Dulac normal form \cite{GKO13,OST18}, which have many applications in the analytical study of dispersive equations. These two normal forms are based on the same idea, that is, eliminating non-resonant monomials when iterating the dynamics. One open question was to understand the connection between these approaches  and resonance-based schemes. The main aim of the present paper is to address it.
 
 In \cite{B24}, a first step has been made toward this direction by clarifying the algebraic tools at play in the derivation of the Poincaré-Dulac normal form in \cite{GKO13}. It turns out that one of the key maps for its derivation is the arborification map coming from mould calculus \cite{Ecalle1,Ecalle2,EV04,Cr09,FM} that allows us to rewrite an expansion in terms of words from rooted trees. 
 Using the arborification approach, combined with additional algebraic arguments based on a forest formula and the resonance decomposition proposed  by Katharina Schratz, one can derive low regularity schemes via this normal form. This is the main result of this paper together with explicit formulae for the local error. 
 	Under  Assumption \ref{Assumption_1}, one expects normal form schemes to have the same local error as schemes described in \cite{BS}.
 	This main result is made precise in Theorem\ref{main_theorem} and Corollary \ref{main_low_schemes} that derive the low regularity schemes with local error.   
 	Let us first explain some of the main ideas that lead to this theorem. One considers a general dispersive equation of the form
 	\begin{equation}\label{dis}
 		\begin{aligned}
 			& i \partial_t u(t,x) +   \mathcal{L}\left(\nabla\right) u(t,x) =\vert \nabla\vert^\alpha p\left(u(t,x), \overline u(t,x)\right), \\
 			& u(0,x) = u_0(x),
 		\end{aligned}
 	\end{equation}
 	equipped with periodic boundary conditions $x\in \mathbb{T}^d$.  We assume that $p$ is a polynomial nonlinearity, and that the structure of \eqref{dis} implies at least local well-posedness of the problem on a finite time interval $]0,\delta]$, $\delta<\infty$, in an appropriate functional space.
 	We write $ \vert \nabla\vert^\alpha(k) $ (resp. $ \mathcal{L}\left(\nabla\right)(k) $) for the operator $ \vert \nabla\vert^\alpha $ (resp. $ \mathcal{L}\left( \nabla \right) $) in Fourier space. The differential operators $\mathcal{L}\left(\nabla \right) $ and $\vert \nabla\vert^\alpha$ shall cast in Fourier  space into the form 
 	\begin{equs}\label{Lldef}
 		\mathcal{L}\left(\nabla \right)(k) = k^\sigma + \sum_{\gamma : |\gamma| < \sigma} a_{\gamma} \prod_{j} k_j^{\gamma_j} ,\qquad  \vert \nabla\vert^\alpha(k) =  \sum_{\gamma : |\gamma| \le \alpha} \prod_{j=1}^{d} k_j^{\gamma_j}
 		% \mathcal{L}\left(\nabla \right)(k) = k^\sigma + \sum_{\gamma : |\gamma| < \sigma} a_{\gamma} \prod_{j=1}^d k_j^{\gamma_j} ,\qquad \vert \nabla\vert^\alpha(k) = \prod_{ \gamma : |\gamma|  {\leq \alpha}} {\color{blue}\prod_{j=1}^d} k_j^{\gamma_j}
 	\end{equs}
 	for some $ \alpha \in \mathbb{R} $, $\sigma \in \mathbb{N}$, $ \gamma \in \mathbb{Z}^d $ and $ |\gamma| = \sum_i \gamma_i $,
 	where  for $k = (k_1,\ldots,k_d)\in \mathbb{Z}^d$ and $m = (m_1, \ldots, m_d)\in \mathbb{Z}^d$ we set \begin{equs}
 		k^\sigma  = k_1^\sigma + \ldots + k_d^\sigma, \qquad k  m = k_1 m_1 + \ldots + k_d m_d.
 	\end{equs} Then, making the change of variable $ 	v(t) =  e^{-it \mathcal{L}\left(\nabla\right)} {u}(t) $, one can rewrite \eqref{dis} in Fourier space as 
 	\begin{align}
 		\begin{split}
 			\partial_t v_k 
 			& =  - i \vert \nabla\vert^\alpha(k)
 			\sum_{\substack{k = \sum_{j=1}^n b_j k_j } }
 			e^{ it \Phi(\bar{k}) } 
 		\prod_{j=1}^n v_{k_j}^{b_j}
 		\end{split}
 		\label{phase_equation}
 	\end{align}
 	where we have supposed that $ p(u, \bar{u}) = \prod_{j=1}^n u^{b_j}$ with $ b_j \in \lbrace -1,1 \rbrace $, $ u^0 = u, u^{-1} = \bar{u} $ and $k, k_j \in \mathbb{Z}^d$. Here, $ \Phi(\bar{k}) $ is a phase defined by
 	\begin{equs}
 		 \Phi(\bar{k}) = \mathcal{L}\left(\nabla \right)(k)  + \sum_{j=1}^n b_j \mathcal{L}\left(\nabla \right)\left(b_j k_j\right).
 	\end{equs}
 	where the notation $\bar{k}$ is a short hand notation for $ k, (k_j)_{j \in \{ 1,...,n\}}$.
 	 In the Poincaré-Dulac normal form, one considers a domain of the frequencies such that the phase $ \Phi(\bar{k}) $ is non-zero (non-resonant). Then, one notices the following identity
 	\begin{equs} \label{integra_trick}
 e^{ i t \Phi(\bar{k}) } 	= 	\partial_{t} \left(   \frac{e^{i t \Phi(\bar{k})}}{i \Phi(\bar{k})} \right).
 	\end{equs}
 	One can proceed with an integration by parts that boils down to distribute the derivative in time on the $v_{k_i}^{b_i}$. One iterates the decomposition by substituting  $ \partial_t  v_{k_i}^{b_i}$ by the right-hand side of \eqref{phase_equation}, which introduces a new phase added to the phase $ \Phi(\bar{k}) $. This approach is not working directly for writing a numerical scheme. Indeed, one wants to be able to map back to physical space the term $ \frac{1}{\Phi(\bar{k})} $. This is possible for very simple cases and low-order schemes, as for the KdV equation, but in general one cannot use simple differential operators for expressing the previous operator in physical space. This is where Katharina Schratz proposed to do a splitting of the operator in Fourier space:
 	\begin{equs} \label{dom_low_decomp}
\Phi(\bar{k})  =  \Phi_{{\tiny{\text{low}}}}(\bar{k}) +	\Phi_{{\tiny{\text{dom}}}}(\bar{k}) 
 	\end{equs}
 with the property that now $ \frac{1}{\Phi_{{\tiny{\text{dom}}}}(\bar{k}) } $ can be mapped back to physical space with usual differential operators. Then, one has to perform an intermediate Taylor expansion before applying the trick \eqref{integra_trick}:  
 	\begin{equs} \label{Taylor_low_phi}
 		\begin{aligned}
 			e^{ i t \Phi(\bar{k}) } 	\prod_{j=1}^n v_{k_j}^{b_j}
 			& = 	e^{ i t \Phi_{{\tiny{\text{dom}}}}(\bar{k}) }   e^{ i t \Phi_{{\tiny{\text{low}}}}(\bar{k}) }
 			\prod_{j=1}^n v_{k_j}^{b_j}  
 			\\ & = \sum_{m =0}^r   \frac{t^m}{m!} i^m \Phi^m_{{\tiny{\text{low}}}}(\bar{k}) e^{ i t \Phi_{{\tiny{\text{dom}}}}(\bar{k}) } 	\prod_{j=1}^n v_{k_j}^{b_j} 
 			  \\ & + \mathcal{O}( t^{r+1}  \Phi^{r+1}_{{\tiny{\text{low}}}}(\bar{k})	\prod_{j=1}^n v_{k_j}^{b_j} 
 			 )
 		\end{aligned}
 	\end{equs}
 	where the error term corresponds to some regularity, one has to ask on the solution $v$ as it is multiplied by the monomial $ \Phi^{r+1}_{{\tiny{\text{\tiny{low}}}}}(\bar{k}) $. Classical approximation will have instead $  \Phi^{r+1}(\bar{k}) $, which requires, in general, more regularity if one chooses well the decomposition \eqref{dom_low_decomp}. Then, one applies this algorithm not to the equation but to the iterated integrals on the initial data that describe the solution. This gives us the following scheme
 		\begin{equs}\label{genscheme_low_intro}
 			U_{k}^{n,r}(t, u_0) =   \sum_{T \in \CT^{\leq r,k}_{0}} \frac{\Upsilon( T)(u_0)}{S(T)} (\Pi^{n,r}   T )(t)
 		\end{equs}
 		where $ \CT^{\leq r,k}_{0} $ is a set of decorated trees, $S(T)$ is a symmetry factor associated with $T$, $\Upsilon( T)(u_0)$ are elementary differentials depending on the initial value $u_0$. All these objects are introduced in Section \ref{Sec::2}. The term $(\Pi^{n,r}   T )(t) $ is a  low regularity discretisation of the oscillatory integral associated with $T$ that appears in the iteration of Duhamel's formula. It is given in Theorem \ref{main_theorem} (see Identity \eqref{main_formula_theorem}).
 		One has an explicit expression for the local error of this scheme given below
 		\begin{equs} \label{local_low_intro}
 			u_k(t) -	U_{k}^{n,r}(t, u_0)  = \sum_{T \in \CT^{\leq r,k}_{0}}  \mathcal{O}( t^{r+1}E^{n,r}(T) \Upsilon( T)(u_0) ).
 		\end{equs}
 		The definition of $E^{n,r}(T)$ is given in \eqref{local_error} and this local error is computed in \eqref{local_error_scheme} together with Corollary \ref{main_low_schemes}. The local error is written in Fourier space see Remark \ref{local_error_physical} for the physical space.

Part of the tools for deriving these schemes come from \cite{B24} by using a well-chosen arborification map $\mathfrak{a}$ applied to the decorated trees introduced in \cite{BS}. Then, one uses a character already described in mould calculus, which implements the summation of the  phases and their integration via $\eqref{integra_trick}$. The novelty and the difficulty of the present work is to incorporate the Taylor expansion steps \eqref{Taylor_low_phi} into this process. Then, one introduces extra monomials $t^m$ that can be differentiated in the next integration by parts which makes combinatorial factors hard to track. Another main novelty from \cite{B24} is the use of a forest formula for describing the low regularity schemes. In our case, the forest formula appears via an iteration of a Butcher-Connes-Kreimer type coproduct. This is an extra algebraic ingredient absent so far from the normal form derivations. Forest type formula also appears in the BPHZ algorithm \cite{BP57,Hepp,WZ69} for
renormalising Feynman diagrams (see \cite{CK2} for a Hopf algebraic description) and was later used for renormalising singular
SPDEs in \cite{BHZ,CH16}.
 
 After this brief explanation of the main ideas of this derivation, 	let us  make a couple of remarks related to this normal form derivation.
 	
 	\begin{remark} The low regularity schemes derived in the present paper  cover the different cases described in \cite{BS}, where one can fully Taylor-expand if some regularity is assumed a priori on the  solution in order to get simple schemes. 
 	To treat the general case, one has to amend the splitting between lower and dominant parts taking into account extra parameters. For doing so, we provide a new formula for the resonance analysis in Definition \ref{adaptative_splitting} that could simplify the description of the low regularity schemes given in  \cite{BS}.
 		\end{remark}
 		
 		\begin{remark}
 			This new derivation makes appear words for encoding the numerical scheme which is close in spirit to what has been performed in \cite{Murua2006,Murua2017} where words describe numerical integrators for dynamical systems. This is not surprising as the word series in \cite{Murua2017} are connected to mould calculus and normal forms.
 			\end{remark}
 			
 			\begin{remark}
 		The low regularity scheme in 		Theorem \ref{main_theorem} is introduced via a forest formula.
 		An attempt to give a forest formula was performed in \cite{ABBMS23} to characterise symmetric low regularity schemes. Such a formula is not very explicit, but was enough for this purpose.
 		Now, with 	Theorem \ref{main_theorem}, one has a more explicit forest formula with expressions for its coefficients.
 				\end{remark} 	
 	\begin{remark} We derive low regularity schemes via the Poincaré-Dulac normal form.
 		One can wonder if such a result can be obtained via the Birkhoff normal form. It is actually not clear, as obtaining low regularity symplectic schemes is quite challenging and remains open in the general case (see \cite{MS25}). The issue is that the Birkhoff normal form is very rigid because one composes iteratively with symplectic transforms. So the symplectic structure of the decomposition is preserved, which is not the case of the Poincaré-Dulac normal form where one proceeds mainly via integration by parts. 
 		\end{remark}

 		Finally, let us outline the paper by summarising the content of its sections. In Section \ref{Sec::2}, we recall the main combinatorial formalism for the decorated trees used in the description of the iterated integrals obtained from Duhamel's formulation. One important definition is  the map $ \Pi $ given in  \ref{def_Pi} that interprets decorated trees as iterated integrals. The computation of the phase in Definition \ref{dom_freq} is used in the sequel for performing the normal form derivation. The main result recalled in this section is the B-series approximation of the solution in Proposition \ref{tree_series} that reduces the problem to a discretisation of a finite number of iterated integrals for a fixed order.
 		
 		In Section \ref{Sec::3}, we recall the main algebraic tools needed for writing low regularity schemes. We start by presenting a variant of the Butcher-Connes-Kreimer coproduct $ \Delta_{\text{\tiny{BCK}}}  $ (see \ref{BCK_new}) already used in \cite{B24}. Then, we introduce the grafting product $ \curvearrowright $ in \eqref{grafting_a} whose dual map $ \curvearrowright^{*} $ is used to describe the arborification map $ \mathfrak{a}  $ in \eqref{arbo_new}. We finish the section with two technical algebraic results needed in the sequel: Proposition \ref{co-asso} that shows a type of co-associativity between $  \Delta_{\text{\tiny{BCK}}} $ and $ \curvearrowright^{*} $ and Proposition \ref{int_decomp} that provides a factorisation of $ \Pi $ in terms of a root decomposition described by \eqref{star_decomp}.
 		
 		In Section \ref{Sec::4}, we illustrate the Poincaré-Dulac normal form on the NLS equation, recalling the main steps of its derivation. We present the main character $\tilde{\Psi}$ in \eqref{character_main} that encodes  the various phases obtained via  integrations by parts. We illustrate the extra Taylor expansion step and explain the gain obtained in regularity in contrast with the full Taylor expansion. In Proposition \ref{prop_inte_parts}, we show how the presence of a monomial $t^n$ modifies the character $\tilde{\Psi}$ by introducing lower part monomials and also many combinatorial factors. We finish the section by a brief explanation about the necessity to apply this scheme to iterated integrals. 
 		
 		In Section \ref{Sec::5}, we start by recalling the main strategy for performing the decomposition \eqref{dom_low_decomp} via Definition \ref{proj_dom}.  Then, we compute inductively in Definition \ref{def_dom}, the lower and dominant phases used when one performs the normal form approach. This definition has been extended if one wants to consider cases with sufficient regularity that lead to some full Taylor expansion. This is performed in Definition \ref{adaptative_splitting}, where new parameters have to be taken into account.  Definition \ref{def_Phi} introduces the modified $\tilde{\Psi}$ that takes into account the new Taylor expansion steps in the derivation. This is the main new definition for the low regularity schemes.  After a couple of algebraic results Propositions \ref{partial_t_pi} and \ref{partial_derivative}, one is able to state the main important technical result that makes appear inductively the forest formula: Proposition \ref{Taylor_phi}. The main formula for the scheme and its local error are given in Theorem \ref{main_theorem}.
 		The proof of this Theorem relies on Corollary \ref{main_corollary}, a consequence of Proposition \ref{Taylor_phi}.
 		Under Assumption \ref{Assumption_1}, which is proved in Proposition  \ref{proof_dis} for equations of type \eqref{dis}, one expects to have the same local error than the low regularity schemes coming from \cite{BS}. The idea of this assumption is that the order of the Taylor expansion of lower and dominant parts may not matter in the local error.  
	
  \subsection*{Acknowledgements}
 {\small
 	Y. B. gratefully acknowledges funding support from the European Research Council (ERC) through the ERC Starting Grant Low Regularity Dynamics via Decorated Trees (LoRDeT), grant agreement No.\ 101075208.
 	Views and opinions expressed are however those of the author(s) only and do not necessarily reflect those of the European Union or the European Research Council. Neither the European Union nor the granting authority can be held responsible for them. This work has been completed  while the author was in
 	residence at the Simons Laufer Mathematical Sciences Institute in 
 	Berkeley, California, during the Fall 2025 semester. This residence is partially supported by the National Science
 	Foundation under Grant No. DMS-2424139.
 }

\section{Fourier decorated trees}

\label{Sec::2}
In this section, we introduce the necessary decorated trees for encoding the low regularity schemes. We follow the formalism of \cite{BS}. We consider a  finite set of   frequencies $ k_1,...,k_m \in \mathbb{Z}^{d}$ and a finite set $\Lab$ that  parametrises a family of polynomials $ (P_{\Labhom})_{\Labhom \in \Lab} $ in the frequencies that represent operators with constant coefficients in Fourier space.  
	We denote by $\mcT$  the set of decorated trees  as elements of the form $T_\mfe^{\mff}$ where 	
	\begin{itemize}	
		\item $ T $ is a non-planar rooted tree with root node $ \varrho $, edge set $ E_T $ and node set $ N_T $. We consider only planted trees which means that $ T $ has only one edge connecting its root $ \varrho $.		
		\item $ \mfe:E_T\rightarrow \mfL\times \{0,1 \} $ is the set of edge decorations $ \mfe(\cdot) = (\mft(\cdot), \mfp(\cdot)) $ where the first component selects the correct polynomial $ P_{\mft} $ when $ \mft \in \Lab $.		
		\item $ \mff:N_T\setminus\{\varrho\}\rightarrow \mathbb{Z}^d $ are node decorations  satisfying the relation for every inner node $ u $
		\begin{equs} \label{frequencies_identity}
			(-1)^{\mfp(e_u)}\mff(u)=\sum_{e = (u,v)\in E_T}(-1)^{\mfp(e)}\mff(v)
		\end{equs}
		where $ e_u$ is the edge outgoing $u$ of the form $ (w,u) $. The node decorations  $(\mff(u))_{u \in L_T}$ determine the decorations of the inner nodes. We assume
		that the node decorations at the leaves are linear combinations of the $k_i$ with coefficients in $ \lbrace -1,0,1 \rbrace$.
	\end{itemize}

We define $ \CH $ as the linear span of $H$ the forests containing decorated trees in $ \CT $. A forest is unordered collection of trees. The empty forest is denoted by $ \one $.
We use a symbolic notation for denoting these decorated trees. An  edge decorated by $ o = (\mft, \mfp) $ with $ \mft \in \Lab $ is denoted by $ \mathcal{I}_{o} $. The symbol $ \mathcal{I}_{o} (\lambda_{k}
\cdot) : \CH \rightarrow \CH $ is viewed as the operation that merges all the roots of the trees composing the forest into one node decorated by $ k \in \mathbb{Z}^d$. We obtain
a decorated tree which is then grafted onto a new root with no decoration. If the condition \eqref{frequencies_identity} is not
satisfied on the argument, then $ \mathcal{I}_{o} (\lambda_{k}
\cdot) $  gives zero.
We denote the forest product by $\cdot$. In the sequel, we will omit it and make the following abuse of notation:
\begin{equs}
	\mathcal{I}_{(\mathfrak{t}_1,\mathfrak{p}_1)}(\lambda_{k_1} F_1) \cdot \mathcal{I}_{(\mathfrak{t}_2,\mathfrak{p}_2)}(\lambda_{k_2} F_2) = \mathcal{I}_{(\mathfrak{t}_1,\mathfrak{p}_1)}(\lambda_{k_1} F_1)  \mathcal{I}_{(\mathfrak{t}_2,\mathfrak{p}_2)}(\lambda_{k_2} F_2)
\end{equs}
where $F_1, F_2 \in \mathcal{H}$.
Below, we provide an example where we introduce some graphical notations. Let  $ \mfL = \{\mft_1,\mft_2\}$, %and  $P_{\mathfrak{t}_1}(\lambda) = -\lambda^2$ and $P_{\mathfrak{t}_2}(\lambda) = \lambda^2$. 
 one has 
\begin{equs} \label{exemple_1}
	T =  \begin{tikzpicture}[scale=0.2,baseline=-5]
		\coordinate (root) at (0,0);
		\coordinate (tri) at (0,-2);
		\coordinate (t1) at (-2,2);
		\coordinate (t2) at (2,2);
		\coordinate (t3) at (0,3);
		\draw[kernels2,tinydots] (t1) -- (root);
		\draw[kernels2] (t2) -- (root);
		\draw[kernels2] (t3) -- (root);
		\draw[symbols] (root) -- (tri);
		\node[not] (rootnode) at (root) {};t
		\node[not] (trinode) at (tri) {};
		\node[var] (rootnode) at (t1) {\tiny{$ k_{\tiny{1}} $}};
		\node[var] (rootnode) at (t3) {\tiny{$ k_{\tiny{2}} $}};
		\node[var] (trinode) at (t2) {\tiny{$ k_3 $}};
	\end{tikzpicture}
=	  \CI_{(\mft_2,0)}(\lambda_{k}\CI_{(\mft_1,1)}(\lambda_{k_1})\CI_{(\mft_1,0)}(\lambda_{k_2})\CI_{(\mft_1,0)}(\lambda_{k_3}))
\end{equs}
where $ k = -k_1 + k_2 + k_3 $ and we have denoted $ \CI_{(\mft_1,1)}(\lambda_{k_1} \one) $ by $ \CI_{(\mft_1,1)}(\lambda_{k_1}) $. A blue edge encodes $ (\mathfrak{t}_2,0) $, a brown edge is used for $ (\mathfrak{t}_1,0) $ and a dashed brown edge is for $ (\mathfrak{t}_2,1) $. 
The frequency decorations appear on the leaves of the previous tree. One does not have to make explicit the frequency decorations  for the inner nodes as they are determined by the ones coming from the leaves.  In the definition below, we define the phase of the oscillatory integrals associated with the decorated trees described above:

\begin{definition} \label{dom_freq} We recursively define $\mathscr{F} :  H \rightarrow \mathbb{R}[\mathbb{Z}^d]$ as:
	\begin{equs}
\,	&	\mathscr{F}(\one)   = 0, \quad
		\mathscr{F}
		(F \cdot \bar F)  =\mathscr{F}(F) + \mathscr{F}(\bar F), \\
	&	\mathscr{F}\left( \CI_{(\Labhom,\Labp)}(  \lambda_{k}F) \right)   =     P_{(\Labhom,\Labp)}(k) +\mathscr{F}(F)  
	\end{equs}
where one has $
	P_{(\Labhom,\Labp)}(k) = (-1)^{\Labp} P_{\Labhom}((-1)^{\Labp}k).$
\end{definition}
We continue the example given in \eqref{exemple_1} and now we set $P_{\mathfrak{t}_1}(\lambda) = -\lambda^2$ and $P_{\mathfrak{t}_2}(\lambda) = \lambda^2$. One has for $ \mathscr{F}(T) $:
\begin{equs}
	\mathscr{F}(T) & =  (-k_1+k_2+k_3)^{2} + (-k_1)^{2} - k_2^{2} - k_3^2.  
\end{equs}
Given a decorated tree  $ T_\mfe^{\mff} $, we define the order of a tree denoted by $ \vert \cdot \vert_{\text{\tiny{ord}}} $
by
\begin{equation*}
	\vert  T_\mfe^{\mff} \vert_{\text{\tiny{ord}}} = \sum_{e \in E_T }   \one_{\lbrace \mathfrak{t}(e) = \mathfrak{t}_2 \rbrace}.
\end{equation*} 
which corresponds to the number of blue edges in a tree. One has $ \vert T \vert_{\text{\tiny{ord}}} = 1$. We also denote by $ o_i $ edge decorations of the form $(\mft_i,p_i)$ for $i \in \{1,2\}$. We will not considered all decorated trees but only those that are associated with a perturbative expansion of dispersive equation with low regularity initial data.
The main example in this paper is the cubic nonlinear  Schrödinger equation given by
\begin{align}
	\begin{cases}
		i \partial_t u + \partial_x^2  u  = | u |^{2} u \\
		u |_{t = 0} = u_0,
	\end{cases}
	\quad (x, t) \in \mathbb{T}^d \times \mathbb{R}.
	\label{NLS}
\end{align}
One can rewrite the previous equation \eqref{NLS} in the Duhamel's form:
\begin{equs}
	u(t) = e^{i t \Delta} u(0) - i e^{it \Delta} \int_0^t e^{- i s \Delta} \left( \bar{u}(s) u(s)^2 \right) ds
\end{equs}
which is given in Fourier space as
\begin{equs} \label{duhamel_Fourier}	
	\begin{aligned}
		u_k(t) & = e^{-i t k^2} u_k(0) \\ &  - i \sum_{k=-k_1 + k_2 + k_3} e^{-it k^2} \int_0^t e^{ i s k^2} \left( \bar{u}_{k_1}(s) u_{k_2}(s) u_{k_3}(s) \right) ds
	\end{aligned}
\end{equs}
where the operator $ e^{it \Delta} $ is sent to $ e^{-it k^2} $ in Fourier space. Also, pointwise products are sent to convolution products on the frequencies. Here compare to \eqref{dis}, one has
\begin{equs}
\mathcal{L}\left(\nabla\right) = \Delta,\quad	p(v,\bar{v}) = v^2 \bar{v}, \quad \alpha =0.
\end{equs}
We start an expansion by:
\begin{equs}
	u_k(t) = e^{-i t k^2} u_k(0) + \mathcal{O}(t)
\end{equs}
and then we obtain
\begin{equs}
	\begin{aligned}
		u_k(t) & = e^{-i t k^2} u_k(0) \\ &  - i \sum_{k=-k_1 + k_2 + k_3} e^{-it k^2} \int_0^t e^{ i s (k^2+k_1^2-k_2^2-k_3^2)} \left( \bar{u}_{k_1}(0) u_{k_2}(0) u_{k_3}(0) \right) ds + \mathcal{O}(t^2)
	\end{aligned}
\end{equs}
The expansion above produces   the oscillatory integrals that are encoded by the previous decorated trees. These trees are generated by specific rules that come from the nonlinearity of the equation considered. The cubic term in \eqref{NLS} generates ternary decorated trees and the conjugate in the product will produce edges decorated by $(\mathfrak{t}_1,1)$. Based on this observation, one constructs recursively the decorated trees associated with \eqref{dis}:
\begin{equs}
	& \CT_0 = \lbrace \CI_{(\Labhom_1,0)}( \lambda_k \CI_{(\Labhom_2,0)}( \lambda_k    \prod_{j=1}^n T_{j}  ) ), \CI_{(\Labhom_1,0)}(\lambda_k) \,   : \,T_j\in \CT_{a_j},  \, k \in \mathbb{Z}^{d}  \rbrace \\
	& \CT_1  = \lbrace \CI_{(\Labhom_1,1)}( \lambda_k \CI_{(\Labhom_2,1)}( \lambda_k   \prod_{j=1}^n T_{j} ) ), \CI_{(\Labhom_1,1)}(\lambda_k) \,   : \, T_j \in \mathcal{T}_{\bar{a}_j}, \, k \in \mathbb{Z}^{d}  \rbrace
\end{equs}
where $  \bar{a}_j = a_j +1 \mod 2 $.
The set $\mathcal{T}_0$ encodes the decorated trees of the Duhamel's formula whereas $\mathcal{T}_1$ is needed for describing its conjugate.
We define $   \CT_j^k  $ for $j \in \lbrace 0,1 \rbrace$ as the subspace of $ \CT_j $ such that the frequency decorations on the nodes connected to the root is $ k $. For $ r\in\mathbb{N} $ and $ j \in \lbrace 0,1 \rbrace $ we set
\begin{equs} \label{space_deco}
	\mcT^{\leq r,k}_j= \cup_{m=0}^r \mcT^{m,k}_j, \quad \mcT^{m,k}_j=\{T_\mfe^\mff\in \mcT^k_j ,|T^\mff_\mfe|_{\text{\tiny{ord}}} =  m \}.
\end{equs}
One can summarise the definition above by saying that   $ \mathcal{T}^{k}_j $ is graded by $ \vert \cdot \vert_{\text{\tiny{ord}}} $.

 Before presenting the B-series type expansion given in \eqref{genscheme}, we need to introduce more definitions. One of them is the symmetry factor associated with these decorated trees.
 Let $ T^{\mff}_{\mfe}\in\mcT $, we consider only the edge decoration, i.e,  $ T_\mfe := \mathcal{P}^{0} T^{\mff}_{\mfe}  $ where $ \mathcal{P}^{0} $ set to zero all the node decorations. Then, one sets $ S(\boldsymbol{1}) = 1 $ and defines inductively
\begin{equs} \label{symmetry_factor}
	S(T):=\prod_{i,j}S(T_{i,j})^{\gamma_{i,j}}\gamma_{i,j}!
\end{equs}
where  $ T_\mfe  =  \prod_{i,j}\CI_{(\mft_{t_i}, p_i)}(T_{i,j})^{\gamma_{i,j}} $ and $T_{i,j} \neq T_{i,\ell}$ for $j \neq \ell$.
We now define the elementary differentials denoted by $ \Upsilon(T)(v) $ from two given polynomials $ p_0(v,\bar{v}) $ $ p_1(v,\bar{v}) $ as 
\begin{equs}
\hat{T}  & = 
 \CI_{(\Labhom_2,a)}( \lambda_k   F  ), \quad 	T   = 
	\CI_{(\Labhom_1,a)}\left( \lambda_k \hat{T} \right) \quad,   \quad a \in \lbrace 0,1 \rbrace  
\end{equs}
with 
\begin{equs}
	F = \prod_{i=1}^n \CI_{(\Labhom_1,0)}( \lambda_{k_i} T_i) \prod_{j=1}^m \CI_{(\Labhom_1,1)}( \lambda_{\tilde k_j} \tilde T_j)  )
\end{equs}
by
\begin{equs} \label{def_Upsilon}
	\begin{aligned}
	\Upsilon(T)(v) = \Upsilon(\hat{T})(v) \, & { :=}  \partial_v^{n} \partial_{\bar v}^{m} p_a(v,\bar v) \prod_{i=1}^n  \Upsilon( \CI_{(\Labhom_1,0)}\left( \lambda_{k_i}  T_i \right) )(v)  \\ & \prod_{j=1}^m \Upsilon( \CI_{(\Labhom_1,1)}( \lambda_{\tilde k_j}\tilde T_j ) )(v)
	\end{aligned}
\end{equs}
where 
\begin{equs} 
	\label{mid_point_rule}
	\begin{aligned}
		\Upsilon(\CI_{(\Labhom_1,0)}( \lambda_{k})  )(v)  \,  { :=}  v_k, \quad 
		\Upsilon(\CI_{(\Labhom_1,1)}( \lambda_{k})  )(v)  \, & { :=}  \bar{v}_k.
	\end{aligned}
\end{equs}
Above, we have used the notation:
\begin{equs}
	p_0(v,\bar{v}) = p(v,\bar{v}), \quad p_{1}(v,\bar{v}) = \overline{p(v, \bar{v})}. 
\end{equs}

The only definition missing is the oscillatory integrals associated with the decorated trees. They are given inductively via a map $ \Pi :  \CH \rightarrow \mathcal{C} $ where $  \mathcal{C} $ is the vector space containing  functions of the form $ z \mapsto \sum_j Q_j(z) e^{i z P_j(k_1,...,k_n)} $  and the $ Q_j(z) $ are polynomials in
$ z $, the $ P_j $ are polynomials in  $ k_1,...,k_n $, and each $ Q_j $ depends implicitly on $k_i$. 
The map $ \Pi $ is defined for every forest $ F $ by
\begin{equs}
	\label{def_Pi}
	\begin{aligned}
		\Pi (\mathcal{I}_{o_1}(\lambda_{k} F))(t) &= e^{i t  P_{o_1}(k)}  (\Pi F)(t), \\
		\Pi (\mathcal{I}_{o_2}(\lambda_{k} F))(t) &= - i |\nabla|^\alpha(k)\int_0^t e^{i s P_{o_2}(k)} \Pi(F)(s)d s,  \\
		\Pi ( F_1 \cdot F_2)(t) & = \Pi ( F_1 )(t) \, \Pi ( F_2)(t).
	\end{aligned}
\end{equs} 

In the next proposition, we recall \cite[Prop. 4.3]{BS} that shows how to encode iterations of Duhamel's formula with the previous definitions
\begin{proposition} \label{tree_series}
	The tree series given by 
	\begin{equs}\label{genscheme}
		U_{k}^{r}(t, u_0) =   \sum_{T \in \CT^{\leq r,k}_{0}} \frac{\Upsilon( T)(u_0)}{S(T)} (\Pi   T )(t)
	\end{equs}
	is the $k$th Fourier coefficient of the solution of $ \eqref{dis} $ up to order $ r $ which means that one has
	\begin{equs}
		u_{k}(t) - 	U_{k}^{r}(t, u_0) = \mathcal{O}(t^{r+1}).
	\end{equs}
\end{proposition}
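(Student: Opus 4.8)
The plan is to run an induction on the order $r$, with Duhamel's formula playing the role it plays in the derivation of B-series. For $r=0$ the set $\CT^{\le 0,k}_0$ is the singleton $\{\CI_{(\Labhom_1,0)}(\lambda_k)\}$, for which $S=1$, $\Upsilon(\cdot)(u_0)=u_k(0)$ and $\Pi(\cdot)(t)=e^{itP_{o_1}(k)}$; thus $U_k^0(t,v)=e^{itP_{o_1}(k)}u_k(0)$, which differs from $u_k(t)$ exactly by the nonlinear Duhamel integral, an $\mathcal O(t)$ quantity since its integrand is bounded on $]0,T]$.

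For the inductive step I would assume the statement at order $r$, for every frequency, and in its conjugate form $\overline{u_{k'}(s)}=\sum_{S\in\CT^{\le r,k'}_1}\frac{\Upsilon(S)(u_0)}{S(S)}(\Pi S)(s)+\mathcal O(s^{r+1})$ (which follows from the plain one by complex conjugation and the definition of $\CT_1$). Writing Duhamel's formula in Fourier for \eqref{dis} — for \eqref{NLS} this is literally \eqref{duhamel_Fourier}, and in general the cubic convolution is replaced by $\sum_{k=\sum_j a_jk_j}\prod_{j=1}^n(u^{a_j})_{k_j}(s)$ with outer factor $e^{itP_{o_1}(k)}$ and inner factor $-i|\nabla|^\alpha(k)e^{isP_{o_2}(k)}$ — I would substitute the order-$r$ expansion into each factor $(u^{a_j})_{k_j}(s)$ and expand the $n$-fold product. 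Every term carrying an $\mathcal O(s^{r+1})$ remainder is, after $\int_0^t\!\cdot\,ds$, of size $\mathcal O(t^{r+2})$, the other factors staying bounded. What remains is a sum, over tuples $(S_1,\dots,S_n)$ with $S_j\in\CT^{\le r,k_j}_{a_j}$ and $k=\sum_j a_jk_j$, of $\big(\prod_j\Upsilon(S_j)(u_0)/S(S_j)\big)$ times $-i|\nabla|^\alpha(k)e^{itP_{o_1}(k)}\int_0^te^{isP_{o_2}(k)}\prod_j(\Pi S_j)(s)\,ds$; by multiplicativity of $\Pi$ and \eqref{def_Pi}, this last factor equals $(\Pi T)(t)$ for $T:=\CI_{(\Labhom_1,0)}\big(\lambda_k\CI_{(\Labhom_2,0)}(\lambda_k\prod_jS_j)\big)\in\CT_0$, which has $|T|_{\text{\tiny{ord}}}=1+\sum_j|S_j|_{\text{\tiny{ord}}}$.

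Next I would regroup the sum by the isomorphism class of $T$. The coefficient bookkeeping rests on two facts: the recursion \eqref{def_Upsilon}, which gives $\Upsilon(T)(u_0)=\big(\partial_v^{n_0}\partial_{\bar v}^{n_1}p\big)(u_0,\bar u_0)\prod_j\Upsilon(S_j)(u_0)$; and the recursion \eqref{symmetry_factor}, whose factors $\gamma_{i,j}!$ are tailored so that the number of ordered tuples producing a given $T$, divided by the derivative prefactor $\partial_v^{n_0}\partial_{\bar v}^{n_1}p$, equals $\prod_jS(S_j)/S(T)$. Hence the sum collapses to $\sum_T\frac{\Upsilon(T)(u_0)}{S(T)}(\Pi T)(t)$, running over all $T\in\CT_0$ of the above shape; those with $|T|_{\text{\tiny{ord}}}\le r+1$ are precisely the nontrivial elements of $\CT^{\le r+1,k}_0$, which together with the order-$0$ tree (the linear term) exhaust $\CT^{\le r+1,k}_0$. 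Finally, each $\Labhom_2$-edge in the integral clause of \eqref{def_Pi} produces one power of $t$, so $(\Pi T)(t)=\mathcal O(t^{|T|_{\text{\tiny{ord}}}})$, and the trees of order $>r+1$ arising above contribute only $\mathcal O(t^{r+2})$. Summing the three $\mathcal O(t^{r+2})$ contributions yields $u_k(t)-U_k^{r+1}(t,v)=\mathcal O(t^{r+2})$, which closes the induction.

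The one genuinely delicate point is the symmetry-factor identity used in the regrouping: tracking how the multinomial multiplicities from expanding $\prod_j(u^{a_j})_{k_j}$, the derivatives of the nonlinearity hidden in $\Upsilon$, and the recursive factors $\gamma_{i,j}!$ of \eqref{symmetry_factor} cancel against one another. This is the substance of \cite[Prop.~4.3]{BS}, proved there by induction on the tree; the two error estimates and the identification of the oscillatory integral with $\Pi T$ are routine.
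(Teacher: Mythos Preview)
Your proposal is correct and follows the standard Duhamel--induction argument underlying B-series expansions; this is exactly the approach of \cite[Prop.~4.3]{BS}, which the present paper does not reprove but merely cites. The paper gives no proof of its own to compare against, so your outline \emph{is} essentially the referenced argument: plug the order-$r$ expansion into each factor of the nonlinearity in Duhamel's formula, identify the resulting oscillatory integral with $(\Pi T)(t)$ via \eqref{def_Pi}, and regroup by tree using the recursions \eqref{def_Upsilon} and \eqref{symmetry_factor}.
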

As an example, we can list the elements of $ \CT_0^{\leq 1,k} $ and $ \CT_0^{\leq 2,k} $ for the equation \eqref{NLS} below:
\begin{equs}
	\CT^{\leq 1,k}_{0} & = \left\lbrace T_0, T_1, \, k_i \in \mathbb{Z}^d \right\rbrace, \quad T_0 =  \begin{tikzpicture}[scale=0.2,baseline=-5]
		\coordinate (root) at (0,1);
		\coordinate (tri) at (0,-1);
		\draw[kernels2] (tri) -- (root);
		\node[var] (rootnode) at (root) {\tiny{$ k $}};
		\node[not] (trinode) at (tri) {};
	\end{tikzpicture} , \quad T_1 =  \begin{tikzpicture}[scale=0.2,baseline=-5]
		\coordinate (root) at (0,2);
		\coordinate (tri) at (0,0);
		\coordinate (trib) at (0,-2);
		\coordinate (t1) at (-2,4);
		\coordinate (t2) at (2,4);
		\coordinate (t3) at (0,5);
		\draw[kernels2,tinydots] (t1) -- (root);
		\draw[kernels2] (t2) -- (root);
		\draw[kernels2] (t3) -- (root);
		\draw[kernels2] (trib) -- (tri);
		\draw[symbols] (root) -- (tri);
		\node[not] (rootnode) at (root) {};
		\node[not] (trinode) at (tri) {};
		\node[var] (rootnode) at (t1) {\tiny{$ k_{\tiny{1}} $}};
		\node[var] (rootnode) at (t3) {\tiny{$ k_{\tiny{2}} $}};
		\node[var] (trinode) at (t2) {\tiny{$ k_{\tiny{3}} $}};
		\node[not] (trinode) at (trib) {};
	\end{tikzpicture}, 
\\
	\CT^{\leq 2,k}_{0} & = \left\lbrace T_0,T_1,T_2,T_3, \, k_i \in \mathbb{Z}^d \right\rbrace, \quad T_2 = \begin{tikzpicture}[scale=0.2,baseline=-5]
		\coordinate (root) at (0,2);
		\coordinate (tri) at (0,0);
		\coordinate (trib) at (0,-2);
		\coordinate (t1) at (-2,4);
		\coordinate (t2) at (2,4);
		\coordinate (t3) at (0,4);
		\coordinate (t4) at (0,6);
		\coordinate (t41) at (-2,8);
		\coordinate (t42) at (2,8);
		\coordinate (t43) at (0,10);
		\draw[kernels2,tinydots] (t1) -- (root);
		\draw[kernels2] (t2) -- (root);
		\draw[kernels2] (t3) -- (root);
		\draw[symbols] (root) -- (tri);
		\draw[symbols] (t3) -- (t4);
		\draw[kernels2,tinydots] (t4) -- (t41);
		\draw[kernels2] (t4) -- (t42);
		\draw[kernels2] (t4) -- (t43);
		\draw[kernels2] (trib) -- (tri);
		\node[not] (trinode) at (trib) {};
		\node[not] (rootnode) at (root) {};
		\node[not] (rootnode) at (t4) {};
		\node[not] (rootnode) at (t3) {};
		\node[not] (trinode) at (tri) {};
		\node[var] (rootnode) at (t1) {\tiny{$ k_{\tiny{4}} $}};
		\node[var] (rootnode) at (t41) {\tiny{$ k_{\tiny{1}} $}};
		\node[var] (rootnode) at (t42) {\tiny{$ k_{\tiny{3}} $}};
		\node[var] (rootnode) at (t43) {\tiny{$ k_{\tiny{2}} $}};
		\node[var] (trinode) at (t2) {\tiny{$ k_5 $}};
	\end{tikzpicture}, \quad T_3 = \begin{tikzpicture}[scale=0.2,baseline=-5]
		\coordinate (root) at (0,2);
		\coordinate (tri) at (0,0);
		\coordinate (trib) at (0,-2);
		\coordinate (t1) at (-2,4);
		\coordinate (t2) at (2,4);
		\coordinate (t3) at (0,4);
		\coordinate (t4) at (0,6);
		\coordinate (t41) at (-2,8);
		\coordinate (t42) at (2,8);
		\coordinate (t43) at (0,10);
		\draw[kernels2] (t1) -- (root);
		\draw[kernels2] (t2) -- (root);
		\draw[kernels2,tinydots] (t3) -- (root);
		\draw[symbols] (root) -- (tri);
		\draw[symbols,tinydots] (t3) -- (t4);
		\draw[kernels2] (t4) -- (t41);
		\draw[kernels2,tinydots] (t4) -- (t42);
		\draw[kernels2,tinydots] (t4) -- (t43);
		\draw[kernels2] (trib) -- (tri);
		\node[not] (trinode) at (trib) {};
		\node[not] (rootnode) at (root) {};
		\node[not] (rootnode) at (t4) {};
		\node[not] (rootnode) at (t3) {};
		\node[not] (trinode) at (tri) {};
		\node[var] (rootnode) at (t1) {\tiny{$ k_{\tiny{4}} $}};
		\node[var] (rootnode) at (t41) {\tiny{$ k_{\tiny{1}} $}};
		\node[var] (rootnode) at (t42) {\tiny{$ k_{\tiny{3}} $}};
		\node[var] (rootnode) at (t43) {\tiny{$ k_{\tiny{2}} $}};
		\node[var] (trinode) at (t2) {\tiny{$ k_5 $}};
	\end{tikzpicture}.
\end{equs}

Then, one can compute the various components of the Butcher-type series given in \eqref{genscheme}. For example, the symmetry factor $S$ is given by:
\begin{equs}
	S(T_0) = 1, \quad S(T_1) = S(T_2) = 2, \quad S(T_3) = 4.
	\end{equs}
Let us stress that the symmetry factor does not take into account the frequency decorations $k_i$ but only the edge decorations. For the elementary differentials, one has
\begin{equs}
	\Upsilon[T_0](v) = v_k, \quad \Upsilon[T_1](v) = 2\bar{v}_{k_1} v_{k_2} v_{k_3}, \quad \Upsilon[T_3](v) = 4 \bar{v}_{k_1} v_{k_2} v_{k_3} \bar{v}_{k_4} v_{k_5}  ,
\end{equs}
where the factors $2$ and $4$ come from the derivation of the monomial $ u^2 $. For the iterated integrals, one has recursively: 
\begin{equs}
	(\Pi  \begin{tikzpicture}[scale=0.2,baseline=-5]
		\coordinate (root) at (0,1);
		\coordinate (tri) at (0,-1);
		\draw[kernels2] (tri) -- (root);
		\node[var] (rootnode) at (root) {\tiny{$ k_2 $}};
		\node[not] (trinode) at (tri) {};
	\end{tikzpicture}) (t) & = e^{-i t k_2^2}, \quad (\Pi  \begin{tikzpicture}[scale=0.2,baseline=-5]
		\coordinate (root) at (0,1);
		\coordinate (tri) at (0,-1);
		\draw[kernels2,tinydots] (tri) -- (root);
		\node[var] (rootnode) at (root) {\tiny{$ k_1 $}};
		\node[not] (trinode) at (tri) {};
	\end{tikzpicture}) (t) = e^{i t k_1^2}, \quad 
	(\Pi \begin{tikzpicture}[scale=0.2,baseline=-5]
		\coordinate (root) at (0,-1);
		\coordinate (t1) at (-2,1);
		\coordinate (t2) at (2,1);
		\coordinate (t3) at (0,2);
		\draw[kernels2,tinydots] (t1) -- (root);
		\draw[kernels2] (t2) -- (root);
		\draw[kernels2] (t3) -- (root);
		\node[not] (rootnode) at (root) {};t
		\node[var] (rootnode) at (t1) {\tiny{$ k_{\tiny{1}} $}};
		\node[var] (rootnode) at (t3) {\tiny{$ k_{\tiny{2}} $}};
		\node[var] (trinode) at (t2) {\tiny{$ k_3 $}};
	\end{tikzpicture}  )(t) = e^{i t (k_1^2 - k_2^2 - k_3^2)},
	\\ ( \Pi \begin{tikzpicture}[scale=0.2,baseline=-5]
		\coordinate (root) at (0,0);
		\coordinate (tri) at (0,-2);
		\coordinate (t1) at (-2,2);
		\coordinate (t2) at (2,2);
		\coordinate (t3) at (0,3);
		\draw[kernels2,tinydots] (t1) -- (root);
		\draw[kernels2] (t2) -- (root);
		\draw[kernels2] (t3) -- (root);
		\draw[symbols] (root) -- (tri);
		\node[not] (rootnode) at (root) {};t
		\node[not] (trinode) at (tri) {};
		\node[var] (rootnode) at (t1) {\tiny{$ k_{\tiny{1}} $}};
		\node[var] (rootnode) at (t3) {\tiny{$ k_{\tiny{2}} $}};
		\node[var] (trinode) at (t2) {\tiny{$ k_3 $}};
	\end{tikzpicture}) (t) & = -i \int^{t}_0 e^{is (-k_1 + k_2 + k_3)^2} e^{i s (k_1^2 - k_2^2 - k_3^2)} ds.
\end{equs}
with these computations, one has
\begin{equs}
	(\Pi T_0)(t) & = e^{-it k^2}, \\	(\Pi T_1)(t) & = e^{-it k^2} (\Pi \begin{tikzpicture}[scale=0.2,baseline=-5]
		\coordinate (root) at (0,0);
		\coordinate (tri) at (0,-2);
		\coordinate (t1) at (-2,2);
		\coordinate (t2) at (2,2);
		\coordinate (t3) at (0,3);
		\draw[kernels2,tinydots] (t1) -- (root);
		\draw[kernels2] (t2) -- (root);
		\draw[kernels2] (t3) -- (root);
		\draw[symbols] (root) -- (tri);
		\node[not] (rootnode) at (root) {};t
		\node[not,label= {[label distance=-0.2em]below: \scriptsize  $  $}] (trinode) at (tri) {};
		\node[var] (rootnode) at (t1) {\tiny{$ k_{\tiny{1}} $}};
		\node[var] (rootnode) at (t3) {\tiny{$ k_{\tiny{2}} $}};
		\node[var] (trinode) at (t2) {\tiny{$ k_3 $}};
	\end{tikzpicture})(t)  =  -i e^{-it k^2} \int^{t}_0 e^{is (-k_1 + k_2 + k_3)^2} e^{i s (k_1^2 - k_2^2 - k_3^2)} ds.
\end{equs}

\section{Butcher-Connes-Kreimer coproduct and Arborification}
\label{Sec::3}
In this section, we introduce the necessary combinatorial maps that will allow us to organise and encode the derivation of the low regularity schemes.
One starts with a variant of the Butcher-Connes-Kreimer coproduct (see \cite{Butcher,CK1} where this coproduct appears for composing B-series and for renormalising Feynman diagrams) on the decorated trees previously introduced. We first consider the space $\CH_2$ as a subspace of $\CH$ containing forests with planted trees of the form $ \CI_{o_2}(\lambda_k F) $. The Butcher-Connes-Kreimer type coproduct $ \Delta_{\text{\tiny{BCK}}} : \CH \rightarrow \CH_2 \otimes \CH $,  is defined recursively by
\begin{equs} \label{BCK_new}
	\begin{aligned}
	\Delta_{\text{\tiny{BCK}}} \CI_{o_1}( \lambda_{k}  F ) & = \left( \id \, \otimes \CI_{o_1}( \lambda_{k}  \cdot )  \right) \Delta_{\text{\tiny{BCK}}} F,  \\ 
	\Delta_{\text{\tiny{BCK}}} \CI_{o_2}( \lambda_{k}  F ) & = \left( \id \, \otimes \CI_{o_2}( \lambda_{k}  \cdot ) \right) \Delta_{\text{\tiny{BCK}}} F +  \CI_{o_2}( \lambda_{k}  F ) \otimes \one.
	\end{aligned}
\end{equs}
and then extended multiplicatively for the forest product which means that one has 
\begin{equs} \label{mult_Delta}
	\Delta_{\text{\tiny{BCK}}} (F_1 \cdot F_2) = \Delta_{\text{\tiny{BCK}}} F_1 \Delta_{\text{\tiny{BCK}}}  F_2   
\end{equs}
where the product above is
\begin{equs}
	(F_1 \otimes F_2) (F_3 \otimes F_4) = F_1 \cdot F_3 \otimes F_2 \cdot F_4.
\end{equs}
From the definition, it is easy to see that one has $ \Delta_{\text{\tiny{BCK}}} : \CH_2 \rightarrow \CH_2 \otimes \CH_2 $.
 This coproduct is a simple version of the one introduced in \cite{BS}, which deals with more decorations on the trees for encoding Taylor expansions in time. A natural interpretation of \eqref{BCK_new} is to use cuts. Edges of the form $ \mathcal{I}_{o_2} $ can be put on the left-hand side of the tensor product with the decorated trees connected to this edge. This corresponds to cutting this edge. One puts all the cuts on the left-hand side of the tensor product with the property that the cut must be admissible. This means that two edges cut cannot lie on the same path to the root.
Below, we provide some examples of computations:
\begin{equs}
	\\ & \Delta_{\text{\tiny{BCK}}} \begin{tikzpicture}[scale=0.2,baseline=-5]
		\coordinate (root) at (0,0);
		\coordinate (tri) at (0,-2);
		\coordinate (t1) at (-2,2);
		\coordinate (t2) at (2,2);
		\coordinate (t3) at (0,3);
		\draw[kernels2,tinydots] (t1) -- (root);
		\draw[kernels2] (t2) -- (root);
		\draw[kernels2] (t3) -- (root);
		\draw[symbols] (root) -- (tri);
		\node[not] (rootnode) at (root) {};t
		\node[not,label= {[label distance=-0.2em]below: \scriptsize  $ $}] (trinode) at (tri) {};
		\node[var] (rootnode) at (t1) {\tiny{$ k_{\tiny{1}} $}};
		\node[var] (rootnode) at (t3) {\tiny{$ k_{\tiny{2}} $}};
		\node[var] (trinode) at (t2) {\tiny{$ k_3 $}};
	\end{tikzpicture}  =   \begin{tikzpicture}[scale=0.2,baseline=-5]
		\coordinate (root) at (0,0);
		\coordinate (tri) at (0,-2);
		\coordinate (t1) at (-2,2);
		\coordinate (t2) at (2,2);
		\coordinate (t3) at (0,3);
		\draw[kernels2,tinydots] (t1) -- (root);
		\draw[kernels2] (t2) -- (root);
		\draw[kernels2] (t3) -- (root);
		\draw[symbols] (root) -- (tri);
		\node[not] (rootnode) at (root) {};t
		\node[not,label= {[label distance=-0.2em]below: \scriptsize  $ $}] (trinode) at (tri) {};
		\node[var] (rootnode) at (t1) {\tiny{$ k_{\tiny{1}} $}};
		\node[var] (rootnode) at (t3) {\tiny{$ k_{\tiny{2}} $}};
		\node[var] (trinode) at (t2) {\tiny{$ k_3 $}};
	\end{tikzpicture} \otimes \one + \one \otimes  \begin{tikzpicture}[scale=0.2,baseline=-5]
		\coordinate (root) at (0,0);
		\coordinate (tri) at (0,-2);
		\coordinate (t1) at (-2,2);
		\coordinate (t2) at (2,2);
		\coordinate (t3) at (0,3);
		\draw[kernels2,tinydots] (t1) -- (root);
		\draw[kernels2] (t2) -- (root);
		\draw[kernels2] (t3) -- (root);
		\draw[symbols] (root) -- (tri);
		\node[not] (rootnode) at (root) {};t
		\node[not,label= {[label distance=-0.2em]below: \scriptsize  $ $}] (trinode) at (tri) {};
		\node[var] (rootnode) at (t1) {\tiny{$ k_{\tiny{1}} $}};
		\node[var] (rootnode) at (t3) {\tiny{$ k_{\tiny{2}} $}};
		\node[var] (trinode) at (t2) {\tiny{$ k_3 $}};
	\end{tikzpicture},
\\
	& \Delta_{\text{\tiny{BCK}}}  \begin{tikzpicture}[scale=0.2,baseline=-5]
		\coordinate (root) at (0,0);
		\coordinate (tri) at (0,-2);
		\coordinate (t1) at (-2,2);
		\coordinate (t2) at (2,2);
		\coordinate (t3) at (0,2);
		\coordinate (t4) at (0,4);
		\coordinate (t41) at (-2,6);
		\coordinate (t42) at (2,6);
		\coordinate (t43) at (0,8);
		\draw[kernels2,tinydots] (t1) -- (root);
		\draw[kernels2] (t2) -- (root);
		\draw[kernels2] (t3) -- (root);
		\draw[symbols] (root) -- (tri);
		\draw[symbols] (t3) -- (t4);
		\draw[kernels2,tinydots] (t4) -- (t41);
		\draw[kernels2] (t4) -- (t42);
		\draw[kernels2] (t4) -- (t43);
		\node[not] (rootnode) at (root) {};
		\node[not] (rootnode) at (t4) {};
		\node[not] (rootnode) at (t3) {};
		\node[not,label= {[label distance=-0.2em]below: \scriptsize  $  $}] (trinode) at (tri) {};
		\node[var] (rootnode) at (t1) {\tiny{$ k_{\tiny{4}} $}};
		\node[var] (rootnode) at (t41) {\tiny{$ k_{\tiny{1}} $}};
		\node[var] (rootnode) at (t42) {\tiny{$ k_{\tiny{3}} $}};
		\node[var] (rootnode) at (t43) {\tiny{$ k_{\tiny{2}} $}};
		\node[var] (trinode) at (t2) {\tiny{$ k_5 $}};
	\end{tikzpicture}  =
	\begin{tikzpicture}[scale=0.2,baseline=-5]
		\coordinate (root) at (0,0);
		\coordinate (tri) at (0,-2) ;
		\coordinate (t1) at (-2,2);
		\coordinate (t2) at (2,2);
		\coordinate (t3) at (0,2);
		\coordinate (t4) at (0,4);
		\coordinate (t41) at (-2,6);
		\coordinate (t42) at (2,6);
		\coordinate (t43) at (0,8);
		\draw[kernels2,tinydots] (t1) -- (root);
		\draw[kernels2] (t2) -- (root);
		\draw[kernels2] (t3) -- (root);
		\draw[symbols] (root) -- (tri);
		\draw[symbols] (t3) -- (t4);
		\draw[kernels2,tinydots] (t4) -- (t41);
		\draw[kernels2] (t4) -- (t42);
		\draw[kernels2] (t4) -- (t43);
		\node[not] (rootnode) at (root) {};
		\node[not] (rootnode) at (t4) {};
		\node[not] (rootnode) at (t3) {};
		\node[not,label= {[label distance=-0.2em]below: \scriptsize  $  $} ] (trinode) at (tri) {};
		\node[var] (rootnode) at (t1) {\tiny{$ k_{\tiny{4}} $}};
		\node[var] (rootnode) at (t41) {\tiny{$ k_{\tiny{1}} $}};
		\node[var] (rootnode) at (t42) {\tiny{$ k_{\tiny{3}} $}};
		\node[var] (rootnode) at (t43) {\tiny{$ k_{\tiny{2}} $}};
		\node[var] (trinode) at (t2) {\tiny{$ k_5 $}};
	\end{tikzpicture} \otimes \one 
	+ \one \otimes \begin{tikzpicture}[scale=0.2,baseline=-5]
		\coordinate (root) at (0,0);
		\coordinate (tri) at (0,-2);
		\coordinate (t1) at (-2,2);
		\coordinate (t2) at (2,2);
		\coordinate (t3) at (0,2);
		\coordinate (t4) at (0,4);
		\coordinate (t41) at (-2,6);
		\coordinate (t42) at (2,6);
		\coordinate (t43) at (0,8);
		\draw[kernels2,tinydots] (t1) -- (root);
		\draw[kernels2] (t2) -- (root);
		\draw[kernels2] (t3) -- (root);
		\draw[symbols] (root) -- (tri);
		\draw[symbols] (t3) -- (t4);
		\draw[kernels2,tinydots] (t4) -- (t41);
		\draw[kernels2] (t4) -- (t42);
		\draw[kernels2] (t4) -- (t43);
		\node[not] (rootnode) at (root) {};
		\node[not] (rootnode) at (t4) {};
		\node[not] (rootnode) at (t3) {};
		\node[not,label= {[label distance=-0.2em]below: \scriptsize  $  $}] (trinode) at (tri) {};
		\node[var] (rootnode) at (t1) {\tiny{$ k_{\tiny{4}} $}};
		\node[var] (rootnode) at (t41) {\tiny{$ k_{\tiny{1}} $}};
		\node[var] (rootnode) at (t42) {\tiny{$ k_{\tiny{3}} $}};
		\node[var] (rootnode) at (t43) {\tiny{$ k_{\tiny{2}} $}};
		\node[var] (trinode) at (t2) {\tiny{$ k_5 $}};
	\end{tikzpicture}  + \begin{tikzpicture}[scale=0.2,baseline=-5]
	\coordinate (root) at (0,0);
	\coordinate (tri) at (0,-2);
	\coordinate (t1) at (-2,2);
	\coordinate (t2) at (2,2);
	\coordinate (t3) at (0,3);
	\draw[kernels2,tinydots] (t1) -- (root);
	\draw[kernels2] (t2) -- (root);
	\draw[kernels2] (t3) -- (root);
	\draw[symbols] (root) -- (tri);
	\node[not] (rootnode) at (root) {};t
	\node[not,label= {[label distance=-0.2em]below: \scriptsize  $ $}] (trinode) at (tri) {};
	\node[var] (rootnode) at (t1) {\tiny{$ k_{\tiny{1}} $}};
	\node[var] (rootnode) at (t3) {\tiny{$ k_{\tiny{2}} $}};
	\node[var] (trinode) at (t2) {\tiny{$ k_3 $}};
\end{tikzpicture} \otimes
	\begin{tikzpicture}[scale=0.2,baseline=-5]
		\coordinate (root) at (0,0);
		\coordinate (tri) at (0,-2);
		\coordinate (t1) at (-2,2);
		\coordinate (t2) at (2,2);
		\coordinate (t3) at (0,3);
		\draw[kernels2,tinydots] (t1) -- (root);
		\draw[kernels2] (t2) -- (root);
		\draw[kernels2] (t3) -- (root);
		\draw[symbols] (root) -- (tri);
		\node[not] (rootnode) at (root) {};t
		\node[not,label= {[label distance=-0.2em]below: \scriptsize  $  $}] (trinode) at (tri) {};
		\node[var] (rootnode) at (t1) {\tiny{$ k_{\tiny{4}} $}};
		\node[var] (rootnode) at (t3) {\tiny{$ \ell_1 $}};
		\node[var] (trinode) at (t2) {\tiny{$ k_5 $}};
	\end{tikzpicture}  
\end{equs}
where $ \ell_1 = -k_1 + k_2 + k_3 $. For the first line, one has only one blue edge and therefore only two possibilities: Cutting this edge or the empty cut. The second tree has two blue edges that are on the same path to the root, one cannot cut these edges at the same time. We need to introduce another product dual to the cut that is the grafting product $ \curvearrowright $ defined as
\begin{equation}
	\label{grafting_a}
	\sigma \curvearrowright \tau:=\sum_{v\in  L_{\tau} } \sigma \curvearrowright_v  \tau,
\end{equation}
where  $\sigma $ and $\tau$ are two decorated trees, $ L_{\tau} $ is the set of leaves of $ \tau $ and $\sigma \curvearrowright_v \tau$ is obtained by merging  the root of $\sigma$ with the leaf $v$ of $\tau$. The new decorated trees obtained have to satisfy \eqref{frequencies_identity} otherwise the grafting is set to be zero. We extend $ \curvearrowright $ to the empty tree $ \mathbf{1} $ by setting
\begin{equs}
	\sigma \curvearrowright \mathbf{1} = \mathbf{1}  \curvearrowright \sigma.
\end{equs}
Here, we assume that each blue edge not related to the root  is connected to an edge of type $ \mathfrak{t}_1 $. Moreover, one has only one blue edge at each node. This means that they appear under the form $ \CI_{o_1}(\lambda_k \mathcal{I}_{o_2}(\lambda_k \cdot)) $. We make the assumption that all the leaves are of the form $ \mathcal{I}_{o_1}(\lambda_k) $.
We will need to use the adjoint  map of the grafting denoted by $ \curvearrowright^* $ and given by
\begin{equs}
	\left\langle 	\curvearrowright^* \tau, \tau_1 \otimes \tau_2 \right\rangle  := \left\langle 	 \tau, \tau_1 \curvearrowright \tau_2 \right\rangle
\end{equs}
where the inner product is defined by
\begin{equs}
	\left\langle  \sigma, \tau \right\rangle = \delta_{\sigma, \tau} S(\tau).
\end{equs}
Here, $ \delta_{\sigma, \tau} $ is equal to one only if $ \sigma = \tau $. Otherwise, it is equal to zero. One can provide a recursive definition of the adjoint map $ 	\curvearrowright^* $. It is defined as the same as $ \Delta_{\text{\tiny{BCK}}} $ for \eqref{BCK_new}. But the multiplicativity \eqref{mult_Delta} is replaced by
\begin{equs}
	\curvearrowright^* F_1 \cdot F_2 = 
\curvearrowright^* F_1 ( F_2 \otimes \one )  +	(F_1 \otimes \one ) \curvearrowright^* F_2.
\end{equs}
 For example, one has
\begin{equs}
\begin{tikzpicture}[scale=0.2,baseline=-5]
	\coordinate (root) at (0,0);
	\coordinate (tri) at (0,-2);
	\coordinate (t1) at (-2,2);
	\coordinate (t2) at (2,2);
	\coordinate (t3) at (0,3);
	\draw[kernels2,tinydots] (t1) -- (root);
	\draw[kernels2] (t2) -- (root);
	\draw[kernels2] (t3) -- (root);
	\draw[symbols] (root) -- (tri);
	\node[not] (rootnode) at (root) {};t
	\node[not,label= {[label distance=-0.2em]below: \scriptsize  $ $}] (trinode) at (tri) {};
	\node[var] (rootnode) at (t1) {\tiny{$ k_{\tiny{1}} $}};
	\node[var] (rootnode) at (t3) {\tiny{$ k_{\tiny{2}} $}};
	\node[var] (trinode) at (t2) {\tiny{$ k_3 $}};
\end{tikzpicture}	\curvearrowright	\begin{tikzpicture}[scale=0.2,baseline=-5]
		\coordinate (root) at (0,0);
		\coordinate (tri) at (0,-2);
		\coordinate (t1) at (-2,2);
		\coordinate (t2) at (2,2);
		\coordinate (t3) at (0,3);
		\draw[kernels2,tinydots] (t1) -- (root);
		\draw[kernels2] (t2) -- (root);
		\draw[kernels2] (t3) -- (root);
		\draw[symbols] (root) -- (tri);
		\node[not] (rootnode) at (root) {};t
		\node[not,label= {[label distance=-0.2em]below: \scriptsize  $  $}] (trinode) at (tri) {};
		\node[var] (rootnode) at (t1) {\tiny{$ k_{\tiny{4}} $}};
		\node[var] (rootnode) at (t3) {\tiny{$ \ell $}};
		\node[var] (trinode) at (t2) {\tiny{$ k_5 $}};
	\end{tikzpicture}  = \begin{tikzpicture}[scale=0.2,baseline=-5]
	\coordinate (root) at (0,0);
	\coordinate (tri) at (0,-2);
	\coordinate (t1) at (-2,2);
	\coordinate (t2) at (2,2);
	\coordinate (t3) at (0,2);
	\coordinate (t4) at (0,4);
	\coordinate (t41) at (-2,6);
	\coordinate (t42) at (2,6);
	\coordinate (t43) at (0,8);
	\draw[kernels2,tinydots] (t1) -- (root);
	\draw[kernels2] (t2) -- (root);
	\draw[kernels2] (t3) -- (root);
	\draw[symbols] (root) -- (tri);
	\draw[symbols] (t3) -- (t4);
	\draw[kernels2,tinydots] (t4) -- (t41);
	\draw[kernels2] (t4) -- (t42);
	\draw[kernels2] (t4) -- (t43);
	\node[not] (rootnode) at (root) {};
	\node[not] (rootnode) at (t4) {};
	\node[not] (rootnode) at (t3) {};
	\node[not,label= {[label distance=-0.2em]below: \scriptsize  $  $}] (trinode) at (tri) {};
	\node[var] (rootnode) at (t1) {\tiny{$ k_{\tiny{4}} $}};
	\node[var] (rootnode) at (t41) {\tiny{$ k_{\tiny{1}} $}};
	\node[var] (rootnode) at (t42) {\tiny{$ k_{\tiny{3}} $}};
	\node[var] (rootnode) at (t43) {\tiny{$ k_{\tiny{2}} $}};
	\node[var] (trinode) at (t2) {\tiny{$ k_5 $}};
\end{tikzpicture}.
\end{equs}
There was one leaf for which the condition \eqref{frequencies_identity} is satisfied. Therefore, one only gets one term.

 We introduce a specific space of words that will be very convenient for encoding the various integration by parts performed in the normal form derivation. 
  We consider $A$ the alphabet whose letters are given by decorated trees of the form:
  \begin{equs}
  \CI_{(\mathfrak{t}_2,0)}( \lambda_{\ell}	\prod_{j=1}^n \CI_{(\Labhom_1,a_j)}( \lambda_{\ell_j} )  ), \quad  \CI_{(\mathfrak{t}_2,1)}( \lambda_\ell	\prod_{j=1}^n \CI_{(\Labhom_1,\bar{a}_j)}( \lambda_{\ell_j} )  ).
  	\end{equs}
  	where the $\ell_i$ are linear combinations of the $k_i$ with coefficients in $ \lbrace-1, 0, 1 \rbrace $.
  	The $a_j$ are the ones that appear in equation \eqref{dis}.
For NLS, one gets the following letters
  \begin{equs}
  	\begin{tikzpicture}[scale=0.2,baseline=-5]
  		\coordinate (root) at (0,0);
  		\coordinate (tri) at (0,-2);
  		\coordinate (t1) at (-2,2);
  		\coordinate (t2) at (2,2);
  		\coordinate (t3) at (0,3);
  		\draw[kernels2,tinydots] (t1) -- (root);
  		\draw[kernels2] (t2) -- (root);
  		\draw[kernels2] (t3) -- (root);
  		\draw[symbols] (root) -- (tri);
  		\node[not] (rootnode) at (root) {};t
  		\node[not,label= {[label distance=-0.2em]below: \scriptsize  $ $}] (trinode) at (tri) {};
  		\node[var] (rootnode) at (t1) {\tiny{$ \ell_{\tiny{1}} $}};
  		\node[var] (rootnode) at (t3) {\tiny{$ \ell_{\tiny{2}} $}};
  		\node[var] (trinode) at (t2) {\tiny{$ \ell_3 $}};
  	\end{tikzpicture}, \quad \begin{tikzpicture}[scale=0.2,baseline=-5]
  	\coordinate (root) at (0,0);
  	\coordinate (tri) at (0,-2);
  	\coordinate (t1) at (-2,2);
  	\coordinate (t2) at (2,2);
  	\coordinate (t3) at (0,3);
  	\draw[kernels2] (t1) -- (root);
  	\draw[kernels2,tinydots] (t2) -- (root);
  	\draw[kernels2,tinydots] (t3) -- (root);
  	\draw[symbols,tinydots] (root) -- (tri);
  	\node[not] (rootnode) at (root) {};t
  	\node[not,label= {[label distance=-0.2em]below: \scriptsize  $ $}] (trinode) at (tri) {};
  	\node[var] (rootnode) at (t1) {\tiny{$ \ell_{\tiny{1}} $}};
  	\node[var] (rootnode) at (t3) {\tiny{$ \ell_{\tiny{2}} $}};
  	\node[var] (trinode) at (t2) {\tiny{$ \ell_3 $}};
  \end{tikzpicture}.
  \end{equs}
 We set $T(A)$ to be the linear span of the words on this alphabet. The empty word is denoted by $ \varepsilon  $. The length of a word $w$, its number of letters, is denoted by $|w|$. We define on $ T(A)$ the shuffle product $ \shuffle $ given by
\begin{equs}
	\varepsilon \shuffle v=v\shuffle \varepsilon  =v, \quad (au\shuffle bv) = a(u\shuffle bv) + b(au\shuffle v)
\end{equs}
for all $u,v\in T(A)$ and $a,b\in A$. Given a word $ u= u_n...u_1 $ with $ u_i \in A $, we set for $j \leq n$
\begin{equs}
	u_{[j]} = u_j...u_1.
\end{equs}
 We define the arborification map $ \mathfrak{a} : \CH_2 \rightarrow T(A) $, for $\tau \in \CT$ as 
\begin{equs} \label{arbo_new}		
		\mathfrak{a}(  \tau )
	= \mathcal{M}_{\tiny{\text{c}}}\left(  P_{A} \otimes \mathfrak{a}   \right) \curvearrowright^{*} \tau.
\end{equs}
where $ P_A $ is the projections on the letters of $A$.
Then, we set for two forests $F_1, F_2 \in \CH_2$
\begin{equs}
	\mathfrak{a}( F_1 \cdot F_2 ) = \mathfrak{a}(F_1) \shuffle \mathfrak{a}(F_2).
\end{equs}
We define $\mathfrak{a}_n$ to be the restriction of $\mathfrak{a}$ to decorated forest with $n$ nodes. One has
\begin{equs}
		\mathfrak{a}_n(  \tau )
	= \mathcal{M}_{\tiny{\text{c}}}\left(  P_{A} \otimes \mathfrak{a}_{n-1}   \right) \curvearrowright^{*} \tau.
	\end{equs}
and $ 	\mathfrak{a}_n(  \tau ) = 0 $ if $\tau$ have a number of nodes different from $n$.
In the next proposition, we prove a co-associativity identity between $\curvearrowright^*$ and $\Delta_{\text{\tiny{BCK}}}$. 
\begin{proposition} \label{co-asso}
	One has
	\begin{equs}
		\left(\id \otimes P_A \otimes \id \right)  	\left( \Delta_{\normalfont\text{\tiny{BCK}}} \otimes \, \id    \right)		\Delta_{\normalfont\text{\tiny{BCK}}} = 		\left(\id \otimes P_A \otimes \id \right)  \left( \id \,  \otimes \curvearrowright^{*}    \right)		\Delta_{\normalfont \text{\tiny{BCK}}}.
	\end{equs}
\end{proposition}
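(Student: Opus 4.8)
The plan is to proceed by induction on the number of edges of the decorated forest, using that $\Delta_{\text{\tiny{BCK}}}$ and $\curvearrowright^{*}$ are given by \emph{the same} recursion \eqref{BCK_new} on the generators $\CI_{o_1}(\lambda_k\,\cdot\,)$ and $\CI_{o_2}(\lambda_k\,\cdot\,)$, the only difference being that $\Delta_{\text{\tiny{BCK}}}$ is multiplicative for the forest product while $\curvearrowright^{*}$ obeys the Leibniz-type rule $\curvearrowright^{*}(F_1\cdot F_2)=(\curvearrowright^{*}F_1)(F_2\otimes\one)+(F_1\otimes\one)(\curvearrowright^{*}F_2)$. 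Writing $L:=(\id\otimes P_A\otimes\id)(\Delta_{\text{\tiny{BCK}}}\otimes\id)\Delta_{\text{\tiny{BCK}}}$ and $R:=(\id\otimes P_A\otimes\id)(\id\otimes\curvearrowright^{*})\Delta_{\text{\tiny{BCK}}}$, the aim is $L=R$, and it suffices to check that $L$ and $R$ satisfy the same recursion in the three cases $\tau=\CI_{o_1}(\lambda_k F)$, $\tau=\CI_{o_2}(\lambda_k F)$ and $\tau=F_1\cdot F_2$.

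For $\tau=\CI_{o_1}(\lambda_k F)$ the recursion of \eqref{BCK_new} only affects the last tensor slot, and that slot is left untouched by $\Delta_{\text{\tiny{BCK}}}\otimes\id$, by $\id\otimes\curvearrowright^{*}$ and by $\id\otimes P_A\otimes\id$; hence $L\,\CI_{o_1}(\lambda_k F)=(\id\otimes\id\otimes\CI_{o_1}(\lambda_k\,\cdot\,))\,L\,F$ and likewise for $R$, so this case is immediate from the induction hypothesis. For $\tau=\CI_{o_2}(\lambda_k F)$ the extra summand $\CI_{o_2}(\lambda_k F)\otimes\one$ in \eqref{BCK_new} is the one that carries the content: on the $L$-side it is hit by the second $\Delta_{\text{\tiny{BCK}}}$, while on the $R$-side the corresponding extraction term is produced by $\curvearrowright^{*}$ acting on the trunk leg; in both cases, after the middle projection $P_A$, the only surviving terms are those in which a single minimal letter $\CI_{o_2}(\lambda_\ell\prod_j\CI_{o_1}(\lambda_{\ell_j}))\in A$ has been split off, every other candidate cut being discarded because it either fails to land in $A$ or produces a truncated tree violating the node-frequency constraint \eqref{frequencies_identity}. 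Matching the two remainders reduces to the statement for $F$, which closes the induction here.

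The delicate point is the product case $\tau=F_1\cdot F_2$, where $\Delta_{\text{\tiny{BCK}}}$ and $\curvearrowright^{*}$ genuinely behave differently: $(\Delta_{\text{\tiny{BCK}}}\otimes\id)\Delta_{\text{\tiny{BCK}}}(F_1 F_2)$ is built from a product of coproduct components of $F_1$ and of $F_2$, whereas $\curvearrowright^{*}(F_1 F_2)$ has only the two Leibniz terms. The idea is to exploit that the projection $P_A$ on the middle slot is very restrictive: in $(\Delta_{\text{\tiny{BCK}}}\otimes\id)\Delta_{\text{\tiny{BCK}}}(F_1F_2)$ the middle component is the product of a remnant of $F_1$ and a remnant of $F_2$, and for it to equal a single letter of $A$ at least one of these remnants must be $\one$, which forces all the nontrivial splitting to occur inside $F_1$ alone or inside $F_2$ alone; after this collapse the $L$-side reduces exactly to the two-term shape appearing on the $R$-side, and one concludes by applying the induction hypothesis to $F_1$ and $F_2$ separately. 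A convenient way to organise this last step, and in fact an alternative entry point for the whole proposition, is to first record the coassociativity $(\Delta_{\text{\tiny{BCK}}}\otimes\id)\Delta_{\text{\tiny{BCK}}}=(\id\otimes\Delta_{\text{\tiny{BCK}}})\Delta_{\text{\tiny{BCK}}}$, which holds because $\Delta_{\text{\tiny{BCK}}}$ is an admissible-cut coproduct cutting only edges of type $o_2$, so that $L=(\id\otimes(P_A\otimes\id)\Delta_{\text{\tiny{BCK}}})\Delta_{\text{\tiny{BCK}}}$; the proposition then reduces to the identity $(P_A\otimes\id)\Delta_{\text{\tiny{BCK}}}=(P_A\otimes\id)\curvearrowright^{*}$ on the elements produced by the trunk leg of $\Delta_{\text{\tiny{BCK}}}$, both sides realising the single operation ``extract one minimal $o_2$-subtree'', and the remaining verification --- which edge is cut, admissibility of the cut, and compatibility with \eqref{frequencies_identity} --- is routine.
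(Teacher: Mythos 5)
Your proposal is correct and takes essentially the same route as the paper's proof: induction on the size of the forest, treating $\mathcal{I}_{o_1}(\lambda_k F)$ and $\mathcal{I}_{o_2}(\lambda_k F)$ through the common recursion \eqref{BCK_new} (with the extra extraction term either killed by the middle projection or matched identically on both sides), and the forest-product case by observing that $P_A$ forces the middle-slot letter to be extracted from one factor only, so that the projected iterated coproduct obeys the same Leibniz-type rule as $\curvearrowright^{*}$ and the induction hypothesis on $F_1$, $F_2$ closes the argument. The only loose phrasing is your claim that ``all the nontrivial splitting'' is confined to one factor --- in fact the other factor is still split by $\Delta_{\text{\tiny{BCK}}}$ between the two outer tensor slots, on both sides of the identity --- but this is exactly what the Leibniz-shape matching produces and is the same level of detail as the paper's own ``derivation due to the projection $P_A$'' step, so nothing essential is missing.
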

\begin{proof} We proceed by induction on the size of the forest. One has
	\begin{equs}
		\left( \Delta_{\normalfont\text{\tiny{BCK}}} \otimes \, \id    \right)		\Delta_{\text{\tiny{BCK}}} \CI_{o_2}( \lambda_{k}  F )  & = \left( \Delta_{\normalfont\text{\tiny{BCK}}} \, \otimes \CI_{o_2}( \lambda_{k}  \cdot ) \right) \Delta_{\text{\tiny{BCK}}} F +  \Delta_{\normalfont\text{\tiny{BCK}}} \CI_{o_2}( \lambda_{k}  F ) \otimes \one.
		\\ & = \left( \id \, \otimes \id \, \otimes \CI_{o_2}( \lambda_{k}  \cdot ) \right) 	\left( \Delta_{\normalfont\text{\tiny{BCK}}} \otimes \, \id    \right)	 \Delta_{\text{\tiny{BCK}}} F \\& +  
		\left( \id \, \otimes \CI_{o_2}( \lambda_{k}  \cdot ) \right) \Delta_{\normalfont\text{\tiny{BCK}}} F \otimes \one
		+ \CI_{o_2}( \lambda_{k}  F ) \otimes \one \otimes \one.
	\end{equs}
	Then, 
	\begin{equs}
	& 	\left(\id \otimes P_A \otimes \id \right)	\left( \Delta_{\normalfont\text{\tiny{BCK}}} \otimes \, \id    \right)		\Delta_{\text{\tiny{BCK}}} \CI_{o_2}( \lambda_{k}  F ) 	\\ & = \left( \id \, \otimes \id \, \otimes \CI_{o_2}( \lambda_{k}  \cdot ) \right) \left(\id \otimes P_A \otimes \id \right)	\left( \Delta_{\normalfont\text{\tiny{BCK}}} \otimes \, \id    \right)	 \Delta_{\text{\tiny{BCK}}} F \\&  
	+ \left( \id \otimes P_A \otimes \id  \right) \left( \id \, \otimes \CI_{o_2}( \lambda_{k}  \cdot ) \right) \Delta_{\normalfont\text{\tiny{BCK}}} F \otimes \one.
	\end{equs}
	On the other hand
	\begin{equs}
		\left(\id \, \otimes \curvearrowright^{*}     \right)		\Delta_{\text{\tiny{BCK}}} \CI_{o_2}( \lambda_{k}  F )  & = \left( \id \, \otimes \curvearrowright^{*} \CI_{o_2}( \lambda_{k}  \cdot ) \right) \Delta_{\text{\tiny{BCK}}} F +  \ \CI_{o_2}( \lambda_{k}  F ) \otimes \one \otimes \one.
		\\ & = \left( \id \, \otimes \id \, \otimes \CI_{o_2}( \lambda_{k}  \cdot ) \right) 	\left( \id \, \otimes  \curvearrowright^{*}    \right)	 \Delta_{\text{\tiny{BCK}}} F \\& +  
		\left( \id \, \otimes \CI_{o_2}( \lambda_{k}  \cdot) \right) \Delta_{\text{\tiny{BCK}}} F \otimes \one
		+ \CI_{o_2}( \lambda_{k}  F ) \otimes \one \otimes \one.
	\end{equs}
	Then, 
	\begin{equs}
			& \left(\id \otimes P_A \otimes \id \right)	\left(\id \, \otimes \curvearrowright^{*}     \right)		\Delta_{\text{\tiny{BCK}}} \CI_{o_2}( \lambda_{k}  F )  
		\\ & = \left( \id \, \otimes \id \, \otimes \CI_{o_2}( \lambda_{k}  \cdot ) \right) \left(\id \otimes P_A \otimes \id \right)	\left( \id \, \otimes  \curvearrowright^{*}    \right)	 \Delta_{\text{\tiny{BCK}}} F \\& +  
	\left( \id \otimes P_A \otimes \id  \right) \left( \id \, \otimes \CI_{o_2}( \lambda_{k}  \cdot ) \right) \Delta_{\normalfont\text{\tiny{BCK}}} F \otimes \one.
	\end{equs}
	We conclude by applying the induction hypothesis on $F$. The proof works the same for $\mathcal{I}_{o_1}(\lambda_k F)$. For a product of two forests, one has 
	\begin{equs}
		\left( \id  \otimes \, \curvearrowright^{*}    \right)		\Delta_{\text{\tiny{BCK}}} (F_1 \cdot F_2) &= \left( \left( \id  \otimes \, \curvearrowright^{*}   \right)		\Delta_{\text{\tiny{BCK}}} F_1 \right) \Delta_{\text{\tiny{BCK}}} F_2 \\ & + \Delta_{\text{\tiny{BCK}}} F_2 \left( \left( \id  \otimes \, \curvearrowright^{*}    \right)		\Delta_{\text{\tiny{BCK}}} F_1 \right) 
	\end{equs}
	where we have used the fact that $ \curvearrowright^{*} $ is a derivation. The map $ \left( \Delta_{\normalfont\text{\tiny{BCK}}} \otimes \, \id    \right)		\Delta_{\normalfont\text{\tiny{BCK}}} $ is multiplicative and therefore not a derivation. But in fact, $ \left(\id \otimes P_A \otimes \id \right)  	\left( \Delta_{\normalfont\text{\tiny{BCK}}} \otimes \, \id    \right)		\Delta_{\normalfont\text{\tiny{BCK}}} $ is a derivation due to the projection $P_A$.  Then, one applies the induction hypothesis on $F_1$ and $F_2$.
\end{proof}

Let $ T  $ given by 
\begin{equs} \label{T_dec}
	T =  \CI_{(\mathfrak{t}_2,a)}( \lambda_{\ell}	\prod_{j=1}^n \CI_{(\Labhom_1,a_j)}( \lambda_{\ell_j} T_j) )
\end{equs}
where the $T_j $ are rooted trees and the $\ell_j$ are disjoint.
One has the following decomposition 
\begin{equs} \label{star_decomp}
	T =  \prod_{j=1}^m  T_j  \star T_r
\end{equs}
where $ \star $ is the simultaneous grafting and 
\begin{equs}
	T_r =  \CI_{(\mathfrak{t}_2,a)}( \lambda_{\ell}	\prod_{j=1}^n \CI_{(\Labhom_1,a_j)}( \lambda_{\ell_j} ) ).
\end{equs}
The product $\star$ grafts simultaneously the decorated trees $T_j$ onto the decorated tree $T_r$. This means that one identifies at the same time the distinct roots of the $T_j$ with distinct leaves of $T_r$. The fact that the $ \ell_j $ are disjoint makes the decomposition \eqref{star_decomp} unique.  
We provide an example below:
\begin{equation*}
	\begin{aligned}
 \begin{tikzpicture}[scale=0.2,baseline=-5]
	\coordinate (root) at (0,0);
	\coordinate (tri) at (0,-2);
	\coordinate (t1) at (-2,2);
	\coordinate (t2) at (2,2);
	\coordinate (t3) at (0,3);
	\draw[kernels2] (t1) -- (root);
	\draw[kernels2,tinydots] (t2) -- (root);
	\draw[kernels2,tinydots] (t3) -- (root);
	\draw[symbols,tinydots] (root) -- (tri);
	\node[not] (rootnode) at (root) {};t
	\node[not,label= {[label distance=-0.2em]below: \scriptsize  $ $}] (trinode) at (tri) {};
	\node[var] (rootnode) at (t1) {\tiny{$ k_{\tiny{1}} $}};
	\node[var] (rootnode) at (t3) {\tiny{$ k_{\tiny{2}} $}};
	\node[var] (trinode) at (t2) {\tiny{$ k_3 $}};
\end{tikzpicture} \, \, \begin{tikzpicture}[scale=0.2,baseline=-5]
\coordinate (root) at (0,0);
\coordinate (tri) at (0,-2);
\coordinate (t1) at (-2,2);
\coordinate (t2) at (2,2);
\coordinate (t3) at (0,3);
\draw[kernels2,tinydots] (t1) -- (root);
\draw[kernels2] (t2) -- (root);
\draw[kernels2] (t3) -- (root);
\draw[symbols] (root) -- (tri);
\node[not] (rootnode) at (root) {};t
\node[not,label= {[label distance=-0.2em]below: \scriptsize  $ $}] (trinode) at (tri) {};
\node[var] (rootnode) at (t1) {\tiny{$ k_{\tiny{5}} $}};
\node[var] (rootnode) at (t3) {\tiny{$ k_{\tiny{6}} $}};
\node[var] (trinode) at (t2) {\tiny{$ k_7 $}};
\end{tikzpicture} 	\star	\begin{tikzpicture}[scale=0.2,baseline=-5]
			\coordinate (root) at (0,0);
			\coordinate (tri) at (0,-2);
			\coordinate (t1) at (-2,2);
			\coordinate (t2) at (2,2);
			\coordinate (t3) at (0,3);
			\draw[kernels2,tinydots] (t1) -- (root);
			\draw[kernels2] (t2) -- (root);
			\draw[kernels2] (t3) -- (root);
			\draw[symbols] (root) -- (tri);
			\node[not] (rootnode) at (root) {};t
			\node[not,label= {[label distance=-0.2em]below: \scriptsize  $ $}] (trinode) at (tri) {};
			\node[var] (rootnode) at (t1) {\tiny{$ \ell_{\tiny{1}} $}};
			\node[var] (rootnode) at (t3) {\tiny{$ k_{\tiny{2}} $}};
			\node[var] (trinode) at (t2) {\tiny{$ \ell_2 $}};
		\end{tikzpicture}	= \begin{tikzpicture}[scale=0.2,baseline=-5]
			\coordinate (root) at (0,0);
			\coordinate (tri) at (0,-2);
			\coordinate (t1) at (-2,2);
			\coordinate (t2) at (2,2);
			\coordinate (t3) at (0,3);
			\coordinate (t4) at (4,4);
			\coordinate (t41) at (2,6);
			\coordinate (t42) at (6,6);
			\coordinate (t43) at (4,8);
			\coordinate (t4l) at (-4,4);
			\coordinate (t41l) at (-2,6);
			\coordinate (t42l) at (-6,6);
			\coordinate (t43l) at (-4,8);
			\draw[kernels2,tinydots] (t1) -- (root);
			\draw[kernels2] (t2) -- (root);
			\draw[kernels2] (t3) -- (root);
			\draw[symbols] (root) -- (tri);
			\draw[symbols] (t2) -- (t4);
			\draw[kernels2,tinydots] (t4) -- (t41);
			\draw[kernels2] (t4) -- (t42);
			\draw[kernels2] (t4) -- (t43);\draw[symbols,tinydots] (t1) -- (t4l);
			\draw[kernels2,tinydots] (t4l) -- (t41l);
			\draw[kernels2] (t4l) -- (t42l);
			\draw[kernels2,tinydots] (t4l) -- (t43l);
			
			\node[not] (rootnode) at (root) {};
			\node[not] (rootnode) at (t4) {};
			\node[var] (rootnode) at (t3) {\tiny{$ k_{\tiny{4}} $}};
			\node[not,label= {[label distance=-0.2em]below: \scriptsize  $  $}] (trinode) at (tri) {};
			\node[not] (rootnode) at (t1) {};
			\node[var] (rootnode) at (t41) {\tiny{$ k_{\tiny{5}} $}};
			\node[var] (rootnode) at (t42) {\tiny{$ k_{\tiny{7}} $}};
			\node[var] (rootnode) at (t43) {\tiny{$ k_{\tiny{6}} $}};
			\node[var] (rootnode) at (t41l) {\tiny{$ k_{\tiny{3}} $}};
			\node[var] (rootnode) at (t42l) {\tiny{$ k_{\tiny{1}} $}};
			\node[var] (rootnode) at (t43l) {\tiny{$ k_{\tiny{2}} $}};
			\node[not] (trinode) at (t2) {};
		\end{tikzpicture} 
	\end{aligned}
\end{equation*}
where $ - \ell_1 = k_1 -k_2 - k_3 $ and $ \ell_2 = -k_5 + k_6 + k_7 $.
In the sequel, we make the following  abuse of notation for $T \in A$, $$ |\nabla|^\alpha(T) :=  |\nabla|^\alpha(k)  $$ where $ k$ is the decoration of the node of $T$ connected to its root.

\begin{proposition} \label{int_decomp}
	For every decorated tree $T$ given by \eqref{T_dec}, one has
	\begin{equs}
		(\Pi T)(t) = - i |\nabla|^\alpha(T_r)\int_0^t e^{i s \mathscr{F}(T_r) } \prod_{j=1}^m (\Pi T_j)(s) ds.
	\end{equs}
	\end{proposition}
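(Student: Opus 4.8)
The plan is to prove the identity by unwinding the recursive definition \eqref{def_Pi} of $\Pi$ directly on the explicit form \eqref{T_dec} of $T$, and then matching the phase that appears against the recursive definition of $\mathscr{F}$ from Definition \ref{dom_freq}; no auxiliary machinery is needed, and in particular no induction, since the $T_j$ are kept symbolic.

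First I would peel off the outermost edge. Writing $F := \prod_{j=1}^{n}\CI_{(\mathfrak{t}_1,a_j)}(\lambda_{\ell_j}T_j)$ for the forest grafted above the node decorated by $\ell$, the second line of \eqref{def_Pi} (the $\mathfrak{t}_2$-rule) gives
\begin{equs}
	(\Pi T)(t) = - i\,|\nabla|^\alpha(\ell) \int_0^t e^{i s\, P_{(\mathfrak{t}_2,a)}(\ell)}\, (\Pi F)(s)\, d s ,
\end{equs}
and by the abuse of notation fixed just before the statement, $|\nabla|^\alpha(\ell)=|\nabla|^\alpha(T_r)$ because $\ell$ is exactly the decoration of the node of $T_r$ adjacent to its root. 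Next, the multiplicativity of $\Pi$ over the forest product (third line of \eqref{def_Pi}) followed by the $\mathfrak{t}_1$-rule (first line of \eqref{def_Pi}) applied to each factor yields
\begin{equs}
	(\Pi F)(s) = \prod_{j=1}^{n}\Pi\bigl(\CI_{(\mathfrak{t}_1,a_j)}(\lambda_{\ell_j}T_j)\bigr)(s) = \prod_{j=1}^{n} e^{i s\, P_{(\mathfrak{t}_1,a_j)}(\ell_j)}\,(\Pi T_j)(s) ,
\end{equs}
where the factors with $T_j=\one$ contribute $\Pi(\one)=1$, so the product effectively ranges over the $m$ nontrivial subtrees in the decomposition \eqref{star_decomp}. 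Collecting the exponentials, the total phase accumulated at time $s$ is $s\bigl(P_{(\mathfrak{t}_2,a)}(\ell)+\sum_{j=1}^{n}P_{(\mathfrak{t}_1,a_j)}(\ell_j)\bigr)$.

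It then remains to recognise this phase as $s\,\mathscr{F}(T_r)$. Applying Definition \ref{dom_freq} to $T_r=\CI_{(\mathfrak{t}_2,a)}(\lambda_\ell\prod_{j=1}^{n}\CI_{(\mathfrak{t}_1,a_j)}(\lambda_{\ell_j}))$, and using $\mathscr{F}(\one)=0$ together with additivity of $\mathscr{F}$ over the forest product, gives precisely $\mathscr{F}(T_r)=P_{(\mathfrak{t}_2,a)}(\ell)+\sum_{j=1}^{n}P_{(\mathfrak{t}_1,a_j)}(\ell_j)$; substituting back produces the claimed formula $(\Pi T)(t)=-i\,|\nabla|^\alpha(T_r)\int_0^t e^{i s\mathscr{F}(T_r)}\prod_{j=1}^{m}(\Pi T_j)(s)\,d s$. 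The computation is pure bookkeeping, so there is no genuine obstacle; the only point to keep straight is that the phase produced by the edges incident to the root of $T$ equals $\mathscr{F}$ of the ``stump'' $T_r$ (which forgets the $T_j$) and not $\mathscr{F}(T)$, and that the disjointness of the $\ell_j$ is exactly what makes $T_r$ and the list $(T_j)_j$ in \eqref{star_decomp} unambiguous.
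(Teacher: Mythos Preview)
Your proof is correct and is precisely the direct application of \eqref{def_Pi} and Definition~\ref{dom_freq} that the paper invokes; the paper's own proof is the single sentence ``It is just a direct application of the identity \eqref{def_Pi} and the Definition \ref{dom_freq},'' and you have simply written out those steps explicitly. There is nothing to add.
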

	\begin{proof} It is just a direct application of the identity \eqref{def_Pi} and the Definition \ref{dom_freq}.
		\end{proof}

\section{Poincaré-Dulac normal form for NLS and discretisation}

\label{Sec::4}
We start by recalling the normal form from \cite{GKO13} providing an explicit expression for the first terms of this decomposition. Then, we introduce the main ideas for the low regularity discretisation. All the derivations are made for the first terms on NLS to illustrate the general formula given in the next section. 

We recall 
the following cubic nonlinear Schr\"odinger equation (NLS)  on the $d$ dimensional torus $\mathbb{T}^d$:
\begin{align}
	\begin{cases}
		i \partial_t u + \Delta  u  = | u |^{2} u \\
		u |_{t = 0} = u_0,
	\end{cases}
	\quad (x, t) \in \mathbb{T}^d \times \mathbb{R}.
	\label{NLS1}
\end{align}
We rewrite the equation using twisting variables:
\begin{equs}
	v(t) =  e^{-it \Delta} {u}(t).
\end{equs}
On the Fourier side, we have
$v_k(t) = e^{i t k^2} u_k(t)$ and
\begin{align}
	\begin{split}
		\partial_t v_k 
		& =  - i 
		\sum_{\substack{k = -k_1 +k_2 + k_3} }
		e^{ i t \Phi(\bar{k})} 
		\bar{v}_{k_1} v_{k_2} v_{k_3}.
	\end{split}
	\label{NLS4}
\end{align}
Here,  the  phase function $\Phi(\bar{k})$ is defined by 
\begin{align}
	\Phi(\bar{k}):& = \Phi(k, k_1, k_2, k_3) = k^2 + k_1^2 - k_2^2- k_3^2 
\end{align}
with $k=-k_1 + k_2 + k_3$. The main idea of the Poincaré-Dulac normal form is to consider the case when the phase $ \Phi(\bar{k}) $ is non-zero and to rewrite the oscillatory factor as
\begin{equs}
	e^{ i t\Phi(\bar{k})}  = \partial_t \left(  \frac{e^{  i t \Phi(\bar{k}) } }{i \Phi(\bar{k})} \right)
\end{equs}
and then to proceed with an integration by parts
\begin{equs}
	e^{ i t \Phi(\bar{k}) } 
	\bar{v}_{k_1} v_{k_2} v_{k_3} & = \partial_t \left(  \frac{e^{ i t \Phi(\bar{k}) } }{i \Phi(\bar{k})} \right) \bar{v}_{k_1} v_{k_2} v_{k_3}
	\\ & = \partial_t \left(  \frac{e^{ i t \Phi(\bar{k}) } }{i \Phi(\bar{k})}  \bar{v}_{k_1} v_{k_2} v_{k_3}  \right) -
	\frac{e^{ i t \Phi(\bar{k}) } }{i \Phi(\bar{k})}  (\partial_t \bar{v}_{k_1}) v_{k_2} v_{k_3} \\ &  -    \frac{e^{ i t \Phi(\bar{k}) } }{i \Phi(\bar{k})}   \bar{v}_{k_1} (\partial_t v_{k_2}) v_{k_3} - \frac{e^{ i t \Phi(\bar{k}) } }{i \Phi(\bar{k})}   \bar{v}_{k_1}  v_{k_2} (\partial_t v_{k_3}).
\end{equs}
One substitutes the $ \partial_t \bar{v}_{k_1}, \partial_t v_{k_2}, \partial_t v_{k_3}  $ by the right-hand side of \eqref{NLS4}. For $ \partial_t v_{k_2}$, one gets
\begin{equs}
	-	\frac{e^{ i t \Phi(\bar{k}) } }{i \Phi(\bar{k})}   \bar{v}_{k_1} (\partial_t v_{k_2}) v_{k_3} =  
	\sum_{\substack{k_2 = -k_4 +k_5 + k_6} } \frac{1 }{ \Phi(\bar{k})}
	e^{ i t(\Phi(\bar{k})+ \Phi(\bar{k}_2)) } 
	\bar{v}_{k_1} v_{k_3}\bar{v}_{k_4} v_{k_5} v_{k_6}.
\end{equs}
Then, one repeats the integration by parts and divides by $  \Phi(\bar{k})+ \Phi(\bar{k}_2)$. This procedure can be encoded by the arborification map $\mathfrak{a}$ and a modification of a character $\tilde{\Psi}$ given for $w= T_{p} \cdots T_1 $ by:
\begin{equs} \label{character_main}
	\tilde{\Psi}\left( w \right)(t) = 	 \frac{e^{i t \sum_{j=1}^p \mathscr{F}(T_j)}	}{\prod_{m=1}^{p} \sum_{j=1}^{m} \mathscr{F}(T_j)}.
\end{equs}
By character, we mean that we have that for every words $ w, \bar{w} $ 
\begin{equs} \label{morphism_prop}
	\tilde{\Psi}\left( w \shuffle \bar{w} \right)(t) = \tilde{\Psi}\left( w  \right)(t) \tilde{\Psi}\left(  \bar{w} \right)(t).
\end{equs}
This character has been introduced in \cite[Sec. 3]{B24} and it is inspired from one character coming from mould calculus (see \cite[Lemme II.8]{Cr09} and \cite[Prop. 6]{FM}).
One cannot use this map for a low regularity scheme. Indeed, one cannot in general map back to physical space 
terms of the form $ \frac{1}{\prod_{m=1}^{p} \sum_{j=1}^{m} \mathscr{F}(T_j)}$ with usual differential operators. The main idea is to replace this factor with something that one can write easily in physical space. 
Before doing the integration by parts, we split the phase $\Phi(\bar{k})$ and get
\begin{equs}
	\Phi(\bar{k}) & = k^2 + k_1^2 - k_2^2- k_3^2 
	\\ &  = (-k_1 + k_2 + k_3)^2 + k_1^2 - k_2^2 - k_3^2
	\\ & = \Phi_{{\tiny{\text{dom}}}}(\bar{k}) + \Phi_{{\tiny{\text{low}}}}(\bar{k})
\end{equs}
where $ \Phi_{{\tiny{\text{\tiny{dom}}}}}(\bar{k}) $ (resp. $ \Phi_{{\tiny{\text{\tiny{low}}}}}(\bar{k}) $) is the dominant (resp. lower) part of $\Phi(\bar{k}) $. They are given by
\begin{equs}
	\Phi_{{\tiny{\text{dom}}}}(\bar{k}) = 	2 k_1^2, \quad \Phi_{{\tiny{\text{low}}}}(\bar{k}) =  - 2 k_1 (k_2 + k_3) + 2 k_2 k_3.
\end{equs}
One observes that the dominant part is of degree $2$ in $k_1$ whereas the lower part has no factor of degree $2$.
Then, one Taylor-expands the lower part before performing the integration by parts:
\begin{equs} \label{Taylor_low}
	\begin{aligned}
		e^{ i t \Phi(\bar{k}) } 
		\bar{v}_{k_1} v_{k_2} v_{k_3} & = 	e^{ i t \Phi_{{\tiny{\text{dom}}}}(\bar{k}) }   e^{ i t \Phi_{{\tiny{\text{low}}}}(\bar{k}) }
		\bar{v}_{k_1} v_{k_2} v_{k_3} 
		\\ & = \sum_{n =0}^r   \frac{t^n}{n!} i^n \Phi^n_{{\tiny{\text{low}}}}(\bar{k}) e^{ i t \Phi_{{\tiny{\text{dom}}}}(\bar{k}) } 
		\bar{v}_{k_1} v_{k_2} v_{k_3} \\ &  + \mathcal{O}( t^{r+1}  \Phi^{r+1}_{{\tiny{\text{low}}}}(\bar{k}) 
		\bar{v}_{k_1} v_{k_2} v_{k_3} ).
	\end{aligned}
\end{equs}
By doing this expansion, we have to ask some regularity on the solution that depends on the lower part via $\Phi^n_{{\tiny{\text{\tiny{low}}}}}(\bar{k}) $. It requires $r+1$ derivatives on the solution as $\Phi^{r+1}_{{\tiny{\text{\tiny{low}}}}}(\bar{k}) $ contains crossed terms ($k_1k_2, k_1 k_3, k_2 k_3$) to the power $r+1$. This is less that the regularity asked by the full Taylor expansion, namely:
\begin{equs}
	e^{ i t\Phi(\bar{k}) } 
	\bar{v}_{k_1} v_{k_2} v_{k_3}  = \sum_{n =0}^r   \frac{t^n}{n!} i^n \Phi^n(\bar{k})  
	\bar{v}_{k_1} v_{k_2} v_{k_3}  + \mathcal{O}( t^{r+1}  \Phi^{r+1}(\bar{k}) 
	\bar{v}_{k_1} v_{k_2} v_{k_3} ).
\end{equs}
Here, the error is governed by $\Phi^{r+1}(\bar{k}) $ that requires $2r+2$ derivatives on the solution.
Once, one has performed the Taylor expansion \eqref{Taylor_low}, we can proceed with the integration by parts:
\begin{equs}
	\, & \frac{t^n}{n!} i^n \Phi^n_{{\tiny{\text{low}}}}(\bar{k}) e^{t i \Phi_{{\tiny{\text{dom}}}}(\bar{k}) } 
	\bar{v}_{k_1} v_{k_2} v_{k_3}  =  \frac{t^n}{n!} i^n \Phi^n_{{\tiny{\text{low}}}}(\bar{k}) \partial_t \left(\frac{e^{ it \Phi_{{\tiny{\text{dom}}}}(\bar{k}) } }{i \Phi_{{\tiny{\text{dom}}}}(\bar{k})} \right)
	\bar{v}_{k_1} v_{k_2} v_{k_3}
	\\ &  = \partial_t \left( \frac{t^n}{n!} i^n \Phi^n_{{\tiny{\text{low}}}}(\bar{k}) \frac{e^{ i t \Phi_{{\tiny{\text{dom}}}}(\bar{k}) } }{i \Phi_{{\tiny{\text{dom}}}}(\bar{k})} 
	\bar{v}_{k_1} v_{k_2} v_{k_3} \right) - \frac{t^{n-1}}{(n-1)!} i^n \Phi^n_{{\tiny{\text{low}}}}(\bar{k}) \frac{e^{ i t \Phi_{{\tiny{\text{dom}}}}(\bar{k}) } }{i \Phi_{{\tiny{\text{dom}}}}(\bar{k})} 
	\bar{v}_{k_1} v_{k_2} v_{k_3} 
	\\ &  -  \frac{t^n}{n!} i^n \Phi^n_{{\tiny{\text{low}}}}(\bar{k}) \frac{e^{ i t \Phi_{{\tiny{\text{dom}}}}(\bar{k}) } }{i \Phi_{{\tiny{\text{dom}}}}(\bar{k})} \left( 
	\partial_t  \bar{v}_{k_1}   v_{k_2} v_{k_3}  +
	\bar{v}_{k_1}  \partial_t  v_{k_2}  v_{k_3} + \bar{v}_{k_1} v_{k_2} \partial_t  v_{k_3}  \right).
\end{equs} 
One can easily prove by recurrence the following proposition:
\begin{proposition} \label{prop_inte_parts}
	One has
	\begin{equs} \label{expression_integration} \begin{aligned}
			\, &	\frac{t^n}{n!} i^n \Phi^n_{{ \normalfont \tiny{\text{low}}}}(\bar{k}) e^{ it \Phi_{{ \normalfont\tiny{\text{dom}}}}(\bar{k}) } 
			\bar{v}_{k_1} v_{k_2} v_{k_3}   =  \sum_{m=0}^n 
			\frac{t^{n-m}}{(n-m)!} i^{n-m-1} \frac{\Phi^n_{{\normalfont \tiny{\text{low}}}}(\bar{k})}{\Phi^{m+1}_{{\normalfont \tiny{\text{dom}}}}(\bar{k})}
			\\ & \left(  \partial_t \left(  e^{ i \Phi_{{ \normalfont\tiny{\text{dom}}}}(\bar{k})t } 
			\bar{v}_{k_1} v_{k_2} v_{k_3} \right) -  e^{ i t \Phi_{{ \normalfont \tiny{\text{dom}}}}(\bar{k}) } 
			\partial_t \left( 	\bar{v}_{k_1} v_{k_2} v_{k_3} \right) \right).
		\end{aligned}
	\end{equs}
\end{proposition}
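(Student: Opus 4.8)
The plan is to prove \eqref{expression_integration} by a recurrence on $n$ that unfolds the integration by parts carried out just above the statement; no analytic input is needed, so the $v_{k_i}$ may be handled purely formally and the symbol $\partial_t(e^{it\Phi_{\mathrm d}}V)$ is kept intact throughout (it becomes a boundary contribution once one integrates in time, while the factor $\partial_t V$ in the other term is where \eqref{NLS4} will later be substituted). I abbreviate $V := \bar{v}_{k_1}v_{k_2}v_{k_3}$, $\Phi_{\mathrm d} := \Phi_{\text{dom}}(\bar k)$, $\Phi_{\mathrm l} := \Phi_{\text{low}}(\bar k)$. The base case $n=0$ is exactly the elementary integration-by-parts relation: writing $e^{it\Phi_{\mathrm d}} = \partial_t\!\big(e^{it\Phi_{\mathrm d}}/(i\Phi_{\mathrm d})\big)$ and applying the Leibniz rule gives $e^{it\Phi_{\mathrm d}}V = \tfrac1{i\Phi_{\mathrm d}}\big(\partial_t(e^{it\Phi_{\mathrm d}}V) - e^{it\Phi_{\mathrm d}}\partial_t V\big)$, which is the right-hand side of \eqref{expression_integration} for $n=0$.

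For the inductive step I would start from the identity displayed immediately before the statement, which writes $\tfrac{t^n}{n!}i^n\Phi_{\mathrm l}^n e^{it\Phi_{\mathrm d}}V$ as the sum of three contributions: a boundary term $\partial_t(\cdots)$; a ``degree-reduced'' term $-\tfrac{t^{n-1}}{(n-1)!}i^n\Phi_{\mathrm l}^n\tfrac{e^{it\Phi_{\mathrm d}}}{i\Phi_{\mathrm d}}V$; and the recursion term $-\tfrac{t^n}{n!}i^n\Phi_{\mathrm l}^n\tfrac{e^{it\Phi_{\mathrm d}}}{i\Phi_{\mathrm d}}\partial_t V$. The key observation is that the degree-reduced term equals $-\tfrac{\Phi_{\mathrm l}}{\Phi_{\mathrm d}}$ times the left-hand side of \eqref{expression_integration} at order $n-1$, since $-\tfrac{t^{n-1}}{(n-1)!}i^n\Phi_{\mathrm l}^n\tfrac{e^{it\Phi_{\mathrm d}}}{i\Phi_{\mathrm d}}V = -\tfrac{\Phi_{\mathrm l}}{\Phi_{\mathrm d}}\,\tfrac{t^{n-1}}{(n-1)!}i^{n-1}\Phi_{\mathrm l}^{n-1}e^{it\Phi_{\mathrm d}}V$. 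Applying the induction hypothesis to it, shifting the summation index $m\mapsto m+1$, multiplying by $-\Phi_{\mathrm l}/\Phi_{\mathrm d}$, and assembling the result together with the $m=0$ contributions produced by the boundary and recursion terms, one recovers the asserted sum over $m=0,\dots,n$. Equivalently --- and this is perhaps the cleanest phrasing of the recurrence --- one simply iterates the very same integration by parts on the degree-reduced term in place: each pass lowers the power of $t$ by one and raises the power of $i\Phi_{\mathrm d}$ in the denominator by one, and after $n$ passes the surviving monomial in $t$ would become negative, so the iteration terminates; this explains why the sum is finite and stops at $m=n$.

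The only step that requires genuine care is the bookkeeping of the scalar prefactors and signs: after the index shift one must check that the factorials $1/(n-m)!$, the powers $i^{\,n-m-1}$ and $\Phi_{\mathrm l}^{\,n}/\Phi_{\mathrm d}^{\,m+1}$, and the sign contributed by the minus in front of the degree-reduced term at each integration by parts assemble exactly into the coefficients appearing in \eqref{expression_integration}. This is elementary arithmetic, which is why the recurrence ``easily'' goes through.
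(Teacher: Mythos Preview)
Your proposal is correct and follows essentially the same approach as the paper's own proof: both argue by induction on $n$, decompose the left-hand side via the three-term Leibniz identity displayed just before the statement, apply the induction hypothesis to the degree-reduced term (which is $-\Phi_{\mathrm l}/\Phi_{\mathrm d}$ times the level-$(n{-}1)$ left-hand side), shift the summation index by one, and recognise the remaining boundary and recursion terms as the $m=0$ contribution. Your remark that the only delicate point is the bookkeeping of factorials, powers of $i$, and signs is exactly right and matches the spirit of the paper's ``easily prove by recurrence''.
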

\begin{proof} We proceed by recurrence on $n$. For $n=0$, one has by Leibniz rule
	\begin{equs}
		e^{ i \Phi_{{ \normalfont\tiny{\text{dom}}}}(\bar{k})t } 
		\bar{v}_{k_1} v_{k_2} v_{k_3} & = \partial_t \left( 	\frac{e^{ i t \Phi_{{ \normalfont\tiny{\text{dom}}}}(\bar{k}) }}{i \Phi_{{ \normalfont\tiny{\text{dom}}}}(\bar{k})} \right)
		\bar{v}_{k_1} v_{k_2} v_{k_3}
		\\ & = \partial_t \left( 	\frac{e^{ it \Phi_{{ \normalfont\tiny{\text{dom}}}}(\bar{k}) }}{i \Phi_{{ \normalfont\tiny{\text{dom}}}}(\bar{k})} 
		\bar{v}_{k_1} v_{k_2} v_{k_3}\right) - 	\frac{e^{ i t\Phi_{{ \normalfont\tiny{\text{dom}}}}(\bar{k}) }}{i \Phi_{{ \normalfont\tiny{\text{dom}}}}(\bar{k})} 
	\partial_t \left( 	\bar{v}_{k_1} v_{k_2} v_{k_3}\right).
	\end{equs}
	We suppose the property true for $n \in \mathbb{N}$. Then, one has
	\begin{equs}
		&	\frac{t^{n+1}}{(n+1)!} i^{n+1} \Phi^{n+1}_{{ \normalfont \tiny{\text{low}}}}(\bar{k}) e^{ it \Phi_{{ \normalfont\tiny{\text{dom}}}}(\bar{k}) } 
		\bar{v}_{k_1} v_{k_2} v_{k_3} \\ & = 	\frac{t^{n+1}}{(n+1)!} i^{n+1} \Phi^{n+1}_{{ \normalfont \tiny{\text{low}}}}(\bar{k}) \partial_t \left( 	\frac{e^{ i t \Phi_{{ \normalfont\tiny{\text{dom}}}}(\bar{k}) }}{i \Phi_{{ \normalfont\tiny{\text{dom}}}}(\bar{k})} \right)
		\bar{v}_{k_1} v_{k_2} v_{k_3}
		\\ &  =   \frac{t^{n+1}}{(n+1)!} i^{n+1} \Phi^{n+1}_{{ \normalfont \tiny{\text{low}}}}(\bar{k}) \partial_t \left( 	\frac{e^{ i t \Phi_{{ \normalfont\tiny{\text{dom}}}}(\bar{k}) }}{i \Phi_{{ \normalfont\tiny{\text{dom}}}}(\bar{k})} 
		\bar{v}_{k_1} v_{k_2} v_{k_3} \right) \\ &  - \frac{t^{n+1}}{(n+1)!} i^{n+1} \Phi^{n+1}_{{ \normalfont \tiny{\text{low}}}}(\bar{k}) 	\frac{e^{ i t \Phi_{{ \normalfont\tiny{\text{dom}}}}(\bar{k}) }}{i \Phi_{{ \normalfont\tiny{\text{dom}}}}(\bar{k})} 
		\partial_t \left( 	\bar{v}_{k_1} v_{k_2} v_{k_3}\right)   
		\\ & - \frac{t^{n}}{n!} i^{n+1} \Phi^{n+1}_{{ \normalfont \tiny{\text{low}}}}(\bar{k}) 	\frac{e^{ i t \Phi_{{ \normalfont\tiny{\text{dom}}}}(\bar{k})}}{i \Phi_{{ \normalfont\tiny{\text{dom}}}}(\bar{k})} 
		\bar{v}_{k_1} v_{k_2} v_{k_3}.
	\end{equs}
	From the induction hypothesis, one has
	\begin{equs}
		 & - \frac{t^{n}}{n!} i^{n+1} \Phi^{n+1}_{{ \normalfont \tiny{\text{low}}}}(\bar{k}) 	\frac{e^{ i \Phi_{{ \normalfont\tiny{\text{dom}}}}(\bar{k})t }}{i \Phi_{{ \normalfont\tiny{\text{dom}}}}(\bar{k})} 
		\bar{v}_{k_1} v_{k_2} v_{k_3}  = \sum_{m=0}^n 
		\frac{t^{n-m}}{(n-m)!} i^{n-m} \frac{\Phi^{n+1}_{{\normalfont \tiny{\text{low}}}}(\bar{k})}{\Phi^{m+2}_{{\normalfont \tiny{\text{dom}}}}(\bar{k})}
		\\ & \left(  \partial_t \left(  e^{ i t \Phi_{{ \normalfont\tiny{\text{dom}}}}(\bar{k})} 
		\bar{v}_{k_1} v_{k_2} v_{k_3} \right) -  e^{ i t\Phi_{{ \normalfont \tiny{\text{dom}}}}(\bar{k})} 
		\partial_t \left( 	\bar{v}_{k_1} v_{k_2} v_{k_3} \right) \right).
		\end{equs}
		Then, in order to conclude, one performs the change of variable $ m' = m+1 $ in the previous term that gives
			\begin{equs}
	\sum_{m=0}^n 
	\frac{t^{n-m}}{(n-m)!} i^{n-m} \frac{\Phi^{n+1}_{{\normalfont \tiny{\text{low}}}}(\bar{k})}{\Phi^{m+2}_{{\normalfont \tiny{\text{dom}}}}(\bar{k})}	 = \sum_{m=1}^{n+1} 
			\frac{t^{n +1 -m}}{(n+1-m)!} i^{n +1-m} \frac{\Phi^{n+1}_{{\normalfont \tiny{\text{low}}}}(\bar{k})}{\Phi^{m+1}_{{\normalfont \tiny{\text{dom}}}}(\bar{k})}.
		\end{equs}
	\end{proof}
From \eqref{expression_integration}, one wants to iterate the procedure as for the Poincaré-Dulac normal form. Normally, one stops on the first term of the second line in \eqref{expression_integration} and continues with substitution in the second term of \eqref{expression_integration}.
In this way, one can produce an approximation of the Poincaré-Dulac normal form up to the order $ \mathcal{O}(t^{r+1}) $ and the regularity asked on the solution will be given by terms of the form $  \Phi^{r+1}_{{ \normalfont \tiny{\text{\tiny{low}}}}}(\bar{k}) $.
We refrain from writing precisely such a statement as we do not see any applications at the present time. 
Its proof should follow the same line as for the resonance scheme.

Let us explain why we cannot use directly this normal form approach. One has to discretise terms of the form
\begin{equs}
	\, & \int_0^t	 \partial_s \left(  e^{ i s \Phi_{{ \normalfont\tiny{\text{dom}}}}(\bar{k})} 
	\bar{v}_{k_1}(s) v_{k_2}(s) v_{k_3}(s) \right) ds \\ &= 
	e^{ i t\Phi_{{ \normalfont\tiny{\text{dom}}}}(\bar{k}) } 
	\bar{v}_{k_1}(t) v_{k_2}(t) v_{k_3}(t)  - 	\bar{v}_{k_1}(0) v_{k_2}(0) v_{k_3}(0).
\end{equs}
Then, one has to perform an approximation  of the
$ \bar{v}_{k_1}(t), v_{k_2}(t), v_{k_3}(t)  $. A Taylor expansion will require too much regularity.
Therefore, we will apply the normal form reduction directly on iterated integrals obtained by iterating Duhamel's formula on the initial data.

\section{Derivation of low regularity schemes}
\label{Sec::5}
In this section, we provide a normal from derivation of new low regularity schemes. We start by recalling a definition (see \cite[Def. 2.2]{BS}) of an important operator that allows us to compute dominant and lower parts of a Fourier operator.
\begin{definition} \label{proj_dom}
	Let $P(k_1, ..., k_m)$ a polynomial in the $k_i$. If the highest-degree
	monomials of $P$ are of the form
	\begin{equs}
		a \sum_{i=1}^m (a_i k_i)^p, \quad a_i \in \lbrace 0,1 \rbrace, \, a \in \mathbb{Z},
	\end{equs}
	then we define $ \mathcal{P}_{\!{\tiny \text{\tiny{dom}} }}(P) $ as
	\begin{equs}
		\mathcal{P}_{\!{\tiny \text{dom} }}(P)  = a \left( \sum_{i=1}^m a_i k_i \right)^p
	\end{equs}
	Otherwise, it is zero.
	\end{definition}

 We introduce lower and dominant parts associated with a word of $T(A)$ in the definition below
\begin{definition} \label{def_dom}
	Let $w   = T_{\ell}...T_1$, we set
	\begin{equs}
	\mathscr{F}_{\!{\tiny \text{dom} }}(w) & =  \mathcal{P}_{\!{\tiny \text{dom} }} \left( 	\mathscr{F}(T_{\ell}) + 	\mathscr{F}_{\!{\tiny \text{dom} }}(T_{\ell-1}...T_1) \right),
	\\ \mathscr{F}_{\!{\tiny \text{low} }}(w) & =  
	(\id -\mathcal{P}_{\!{\tiny \text{dom} }} ) \left( 	\mathscr{F}(T_{\ell}) + 	\mathscr{F}_{\!{\tiny \text{dom} }}(T_{\ell-1}...T_1) \right).
	\end{equs}
	\end{definition}

We need a refinement of the previous definition taking into account the regularity a priori assumed on the solution and the order of the monomial in time multiplying the oscillatory term. Before, we introduce the notion of the degree of a monomial in the $k_i$.
Given $P = \prod_{i} k_i^{m_i}$ to be
\begin{equs}
	\deg(P) = \max_{i} m_i.
\end{equs}
Then, if $P$ is a linear combination of monomials, the degree is the maximum of the degree of its monomials.
As an example, one has
\begin{equs}
	\deg(  k_1 k_2 ) = 1, \quad \deg(k_1^2) = 2. 
\end{equs}
Then, we introduce the following definition.
\begin{definition}
	\label{adaptative_splitting}
		Let $w   = T_{\ell}...T_1$, $ m = m_{\ell}...m_1$, $n, r, m_i \in\mathbb{N}$, one sets
	\begin{equs}
		\mathscr{F}^{n,r}_{\!{\tiny \text{dom} }}(w,m)  = 0,  
		\quad \mathscr{F}_{\!{\tiny \text{low} }}^{n,r}(w,m)  =  
	 	\mathscr{F}(T_\ell) + 	\mathscr{F}^{n,r}_{\!{\tiny \text{dom} }}(w_{[\ell-1]}, m_{[\ell-1]}).
	\end{equs}
	if 
	\begin{equs} \label{cond_split} (r+ \ell+1-m_{\ell})\deg(\mathscr{F}(T_\ell) + 	\mathscr{F}^{n,r}_{\!{\tiny \text{dom} }}(w_{[\ell-1]},m_{[\ell-1]})) + \ell \alpha
		\\ + \sum_{j=1}^{\ell-1} (r+j+1-m_{j})\deg( 	\mathscr{F}^{n,r}_{\!{\tiny \text{dom} }}(w_{[j-1]},m_{[j-1]})) \leq  n.
	\end{equs}
	 Otherwise, one has
\begin{equs}
	\mathscr{F}^{n,r}_{\!{\tiny \text{dom} }}(w,m) & =  \mathcal{P}_{\!{\tiny \text{dom} }} \left( 	\mathscr{F}(T_{\ell}) + 	\mathscr{F}^{n,r}_{\!{\tiny \text{dom} }}(w_{[\ell-1]}, m_{[\ell-1]}) \right),
	\\ \mathscr{F}^{n,r}_{\!{\tiny \text{low} }}(w,m) & =  
	(\id -\mathcal{P}_{\!{\tiny \text{dom} }} ) \left( 	\mathscr{F}(T_{\ell}) + 	\mathscr{F}^{n,r}_{\!{\tiny \text{dom} }}(w_{[\ell-1]}, m_{[\ell-1]})\right).
\end{equs}
	\end{definition}
	The main difference between Definitions \ref{def_dom} and \ref{adaptative_splitting}, is the condition \eqref{cond_split} that says that one has enough regularity and can Taylor-expand the full operator instead of doing a resonance analysis. One has a similar condition in \cite[Def. 3.1]{BS}.
	
	\begin{example} \label{ex_f}
		We illustrate the previous example on cubic NLS by considering the following phase:
			\begin{equs}
			T = \begin{tikzpicture}[scale=0.2,baseline=-5]
				\coordinate (root) at (0,0);
				\coordinate (tri) at (0,-2);
				\coordinate (t1) at (-2,2);
				\coordinate (t2) at (2,2);
				\coordinate (t3) at (0,3);
				\draw[kernels2,tinydots] (t1) -- (root);
				\draw[kernels2] (t2) -- (root);
				\draw[kernels2] (t3) -- (root);
				\draw[symbols] (root) -- (tri);
				\node[not] (rootnode) at (root) {};t
				\node[not,label= {[label distance=-0.2em]below: \scriptsize  $ $}] (trinode) at (tri) {};
				\node[var] (rootnode) at (t1) {\tiny{$ k_{\tiny{1}} $}};
				\node[var] (rootnode) at (t3) {\tiny{$ k_{\tiny{2}} $}};
				\node[var] (trinode) at (t2) {\tiny{$ k_3 $}};
			\end{tikzpicture}, \quad \mathscr{F}(T) = (-k_1+k_2+k_3)^2 + k_1^2 -k_2^2 - k_3^2.
		\end{equs}
		One has
		\begin{equs}
			\mathscr{F}^{2,1}_{\!{\tiny \text{dom} }}(T,0)  =  2 k_1^2,
			\quad \mathscr{F}^{2,1}_{\!{\tiny \text{low} }}(T,0)  =  
			-2 k_1(k_2 + k_3) + 2 k_2 k_3.
		\end{equs}
		which is due to the fact that
		\begin{equs}
			3 \deg(\mathscr{F}(T)) = 6 > 2.
			\end{equs}
		On the other hand, one has
		\begin{equs}
			\mathscr{F}^{6,1}_{\!{\tiny \text{dom} }}(T,0)  =  0,
			\quad \mathscr{F}^{6,1}_{\!{\tiny \text{low} }}(T,0)  =  \mathscr{F}(T)
		.
		\end{equs}
	\end{example}
	
We introduce below the basic blocks for defining low regularity schemes which are an extension of $ \tilde{\Psi} $ given in  \eqref{character_main}.

	\begin{definition} \label{def_Phi}
		For every $w \in T(A)$, $m, r \in \mathbb{N}$, $a \in \lbrace 0,1 \rbrace$, one sets
 \begin{equs} \label{Psi_m_ell}
 	\begin{aligned}
\Psi^{n,r}_{m,a}\left( w \right)(t) & = -	  \frac{t^{m}}{m!} \sum_{ m_{j+1}= \sum_{\ell \leq j } (p_{\ell}-q_{\ell})  } \,\prod_{j=1}^{|w|} i^{p_j + q_j-1}\binom{m_{j} + p_j}{p_j}   \\ & \times i |\nabla|^\alpha(T_j) \frac{\mathscr{F}_{\!{\tiny\tiny \text{low} }}^{n,r}(w_{[j]},m_{[j]})^{p_j}	}{  \mathscr{F}^{n,r}_{\!{\tiny\tiny \text{dom} }}(w_{[j]},m_{[j]})^{q_j+1}}      \left(    e^{i t  \mathscr{F}^{n,r}_{\!{\tiny\tiny \text{dom} }}(w,\hat{m})} - a \delta_{m,0} \right)
\end{aligned}
\end{equs}
where $  \hat{m} = m_{|w|}...m_1$ with $m_1 =0$, $m_{|w|+1} =m$  and
we recall the notation $ w_{[j]} = w_j...w_1 $. One must have 
\begin{equs} \label{bound_Taylor_expansion}
0 \leq q_j \leq m_j + p_j, \quad p_{j} = r+ j-1 -m_j.
\end{equs}
Also if the dominant part $\mathscr{F}^{n,r}_{\!{\tiny\tiny \text{\tiny{dom}} }}(w_{[j]},m_{[j]})$ is zero, we set $q_j = -1$. 
 Then, one also sets
 \begin{equs} \label{itermediate_psi}
	\Psi^{n,r}_{m} \left( w \right)(t) &=  i |\nabla|^\alpha(T_{|w|}) e^{i t  \mathscr{F}(T_{|w|})} \Psi^{n,r}_{m,0}\left( w_{[|w|-1]} \right)(t).
\end{equs}
If $ w $ has more than one letter otherwise
\begin{equs} \label{itermediate_psi_bis}
	\Psi^{n,r}_{m,0} \left( w \right)(t) &=  i |\nabla|^\alpha(w) \one_{m=0} e^{i t  \mathscr{F}(w)}.
\end{equs}
\end{definition}
Let us comment briefly on this definition. Each letter of $w$ induces a Taylor expansion of a lower part given by $ \mathscr{F}^{n,r}_{\!{\tiny\tiny \text{\tiny{low}} }}(w_{[j]},m_{[j]})^{p_j} $. Here, $ r+j-1-m_j$ is the length of the Taylor expansion at the $j$th step. Then, before moving to another resonance analysis, one performs several integrations by parts, which lower  the degree of the monomial obtained from the previous Taylor expansion. This action corresponds to the various $p_j$ and its value in \eqref{bound_Taylor_expansion} reflect the power of the monomial in $t$ before starting the  next integration by parts.

\begin{remark}
	If one only uses  Definition \eqref{def_dom} for the splitting into dominant and lower parts, one gets a simpler expressions given by
 \begin{equs} \label{Psi_m_ell}
	\begin{aligned}
		\Psi^{r}_{m,a}\left( w \right)(t) & = - 	  \frac{t^{m}}{m!} \sum_{ m_{j+1} = \sum_{\ell \leq j } (p_{\ell}-q_{\ell})  } \, \prod_{j=1}^{|w|} i^{p_j + q_j-1}\binom{m_{j} + p_j}{p_j}   \\ & \times i |\nabla|^\alpha(T_j) \frac{\mathscr{F}_{\!{\tiny\tiny \text{low} }}(w_{[j]})^{p_j}	}{  \mathscr{F}_{\!{\tiny\tiny \text{dom} }}(w_{[j]})^{q_j+1}}      \left(    e^{i t  \mathscr{F}_{\!{\tiny\tiny \text{dom} }}(w)} - a \delta_{m,0} \right).
	\end{aligned}
\end{equs}
	\end{remark}
	
	\begin{example} \label{ex_psi}
		Let us illustrate this complicated definition via an example. One considers the decorated tree $w = T$ given by
		\begin{equs}
			T = \begin{tikzpicture}[scale=0.2,baseline=-5]
				\coordinate (root) at (0,0);
				\coordinate (tri) at (0,-2);
				\coordinate (t1) at (-2,2);
				\coordinate (t2) at (2,2);
				\coordinate (t3) at (0,3);
				\draw[kernels2,tinydots] (t1) -- (root);
				\draw[kernels2] (t2) -- (root);
				\draw[kernels2] (t3) -- (root);
				\draw[symbols] (root) -- (tri);
				\node[not] (rootnode) at (root) {};t
				\node[not,label= {[label distance=-0.2em]below: \scriptsize  $ $}] (trinode) at (tri) {};
				\node[var] (rootnode) at (t1) {\tiny{$ k_{\tiny{1}} $}};
				\node[var] (rootnode) at (t3) {\tiny{$ k_{\tiny{2}} $}};
				\node[var] (trinode) at (t2) {\tiny{$ k_3 $}};
			\end{tikzpicture}.
		\end{equs}
		One has $ |w|=1 $ and gets 
		\begin{equs} 
			\begin{aligned}
				\Psi^{n,r}_{m,a}\left( w \right)(t) & = 	  -\frac{t^{m}}{m!} \sum_{\substack{m = p - q \leq r +1 }} i^{p + q}     \frac{\mathscr{F}_{\!{\tiny\tiny \text{low} }}^{n,r}(w,0)^{p}	}{  \mathscr{F}^{n,r}_{\!{\tiny\tiny \text{dom} }}(w,0)^{q+1}}      \left(    e^{i t  \mathscr{F}^{n,r}_{\!{\tiny\tiny \text{dom} }}(w,0)} - a \delta_{m,0} \right).
			\end{aligned}
		\end{equs}
		One the other hand, one gets
		\begin{equs}
			(\Pi T)(t) & = - i \int^{t}_0 e^{is \mathscr{F}^{n,1}_{\!{\tiny\tiny \text{dom} }}(w,0)}
			(1 + is \mathscr{F}^{n,1}_{\!{\tiny\tiny \text{low} }}(w,0) + \mathcal{O}(s^2 \mathscr{F}^{n,1}_{\!{\tiny\tiny \text{low} }}(w,0)^2) ds 
			\\ &  = -i \left( \frac{1	}{ i \mathscr{F}^{n,1}_{\!{\tiny\tiny \text{dom} }}(w,0)} - \frac{i \mathscr{F}_{\!{\tiny\tiny \text{low} }}^{n,1}(w,0)	}{ i^2 \mathscr{F}^{n,1}_{\!{\tiny\tiny \text{dom} }}(w,0)^2}  \right)   \left(    e^{i t  \mathscr{F}^{n,1}_{\!{\tiny\tiny \text{dom} }}(w,0)} - 1 \right)
			\\ & - i^2 t \frac{\mathscr{F}_{\!{\tiny\tiny \text{low} }}^{n,1}(w,0)	}{ i \mathscr{F}^{n,1}_{\!{\tiny\tiny \text{dom} }}(w,0)}      \left(    e^{i t  \mathscr{F}^{n,1}_{\!{\tiny\tiny \text{dom} }}(w,0)}  \right)
			+ \mathcal{O}(t^3 \mathscr{F}^{n,1}_{\!{\tiny\tiny \text{low} }}(w,0)^2) 
			\\ & = \Psi^{n,1}_{0,1}\left( w \right)(t) + \Psi^{n,1}_{1,0}\left( w \right)(t)  +  \mathcal{O}(t^3 \mathscr{F}^{n,1}_{\!{\tiny\tiny \text{low} }}(w,0)^2).
			\end{equs}
	\end{example}

	\begin{proposition} \label{partial_t_pi}
		One has
		\begin{equs}
		\partial_t (\Pi \cdot )(t) = - i   \left( (\Pi \cdot)(t) \otimes  e^{i  t \mathscr{F}(\cdot)}|\nabla|^\alpha(\cdot)   \right) \left(\id \otimes P_{A} \right)  \Delta_{\normalfont\text{\tiny{BCK}}}.
		\end{equs}
		\end{proposition}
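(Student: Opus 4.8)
The plan is to read the stated formula as an identity of operators on forests of the $\mathfrak{t}_2$‑rooted decorated trees of Section~\ref{Sec::2} --- those built by iterated grafting from the blocks $T_r$ of \eqref{T_dec}, so that every planted tree occurring is of the form $\CI_{o_2}(\lambda_k G)$ and the edge types alternate, with $\mathfrak{t}_1$‑leaves on top --- and to prove it in two stages: a multiplicative reduction to a single tree, followed by a direct computation on one tree. (On a bare $\mathfrak{t}_1$‑leaf the formula would fail, since its $\Pi$‑value is $e^{itP_{o_1}(k)}$ whose $t$‑derivative is nonzero while $P_A$ kills it; this is exactly the class of forests on which $\Pi$ is used in the scheme.)

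\textbf{Reduction to one tree.} I would first observe that $\Pi(\cdot)(t)$ (by its multiplicativity in \eqref{def_Pi}) and $\partial_t$ (by Leibniz) both turn a forest product into an ordinary product, while $\Delta_{\text{\tiny{BCK}}}$ is multiplicative for $(F_1\otimes F_2)(F_3\otimes F_4)=F_1F_3\otimes F_2F_4$ as in \eqref{mult_Delta}. Because $P_A$ projects onto a single letter, in $(\id\otimes P_A)\Delta_{\text{\tiny{BCK}}}\big(\prod_iS_i\big)$ only the terms survive in which exactly one trunk --- say that of $S_{i_0}$ --- supplies the letter while every other trunk is empty, and the only term of $\Delta_{\text{\tiny{BCK}}} S_i$ with empty trunk is $S_i\otimes\one$. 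Hence $(\id\otimes P_A)\Delta_{\text{\tiny{BCK}}}\big(\prod_iS_i\big)=\sum_{i_0}\big(\prod_{i\neq i_0}S_i\big)\cdot\big[(\id\otimes P_A)\Delta_{\text{\tiny{BCK}}} S_{i_0}\big]$, and after applying $(\Pi\cdot)(t)\otimes e^{it\mathscr{F}(\cdot)}|\nabla|^\alpha(\cdot)$ and multiplying by $-i$ this becomes $\sum_{i_0}\big(\prod_{i\neq i_0}\Pi(S_i)(t)\big)\,\partial_t\Pi(S_{i_0})(t)=\partial_t\Pi\big(\prod_iS_i\big)(t)$, granting the identity for a single tree.

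\textbf{One tree.} For $S=\CI_{o_2}(\lambda_k G)$ the $o_2$‑rule of \eqref{def_Pi} and the fundamental theorem of calculus give
\begin{equs}
\partial_t\Pi(S)(t)=- i\,|\nabla|^\alpha(k)\,e^{it P_{o_2}(k)}\,\Pi(G)(t).
\end{equs}
For the right‑hand side, \eqref{BCK_new} gives $\Delta_{\text{\tiny{BCK}}} S=(\id\otimes\CI_{o_2}(\lambda_k\cdot))\Delta_{\text{\tiny{BCK}}} G+\CI_{o_2}(\lambda_k G)\otimes\one$, the second term dying under $\id\otimes P_A$. In a summand $C\otimes R$ of $\Delta_{\text{\tiny{BCK}}} G$ the tree $\CI_{o_2}(\lambda_k R)$ lies in $A$ exactly when $R$ is a product of $\mathfrak{t}_1$‑leaves; I would show, using the alternation of edge types, that this forces $R$ to come from cutting precisely the topmost $\mathfrak{t}_2$‑edge of every non‑leaf tree of $G$, so that there is a single such term, with $R=L^{*}$ the forest of $\mathfrak{t}_1$‑leaves issuing from the roots of $G$ and $C=C^{*}$ the forest of immediate $\mathfrak{t}_2$‑subtrees of $G$. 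The right‑hand side then equals $-i\,\Pi(C^{*})(t)\,e^{it\mathscr{F}(\CI_{o_2}(\lambda_k L^{*}))}\,|\nabla|^\alpha(\CI_{o_2}(\lambda_k L^{*}))$, and using $\mathscr{F}(\CI_{o_2}(\lambda_k L^{*}))=P_{o_2}(k)+\mathscr{F}(L^{*})$ (Definition~\ref{dom_freq}), $|\nabla|^\alpha(\CI_{o_2}(\lambda_k L^{*}))=|\nabla|^\alpha(k)$, together with the elementary factorisation $\Pi(G)(t)=\Pi(C^{*})(t)\,e^{it\mathscr{F}(L^{*})}$ --- which is multiplicative over the trees of $G$ and, on a single tree, reads off at once from the $o_1$‑rule of $\Pi$ and the recursion of $\mathscr{F}$ in the two cases of a $\mathfrak{t}_1$‑leaf and of $\CI_{(\mathfrak{t}_1,a)}(\lambda_{k'}\CI_{o_2}(\lambda_{k'}H))$ --- the right‑hand side collapses to $-i\,|\nabla|^\alpha(k)\,e^{it P_{o_2}(k)}\,\Pi(G)(t)$, matching the display.

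\textbf{Main obstacle.} The one genuinely delicate step is the combinatorial claim that, among all admissible $\mathfrak{t}_2$‑cuts of $G$, exactly one has trunk a product of $\mathfrak{t}_1$‑leaves: this is where the structural conventions of Section~\ref{Sec::2} (alternating edge types, leaves of type $\mathfrak{t}_1$, a single $\mathfrak{t}_2$‑edge from each relevant node) are used essentially, and where one must verify that a cut reaching deeper than the topmost $\mathfrak{t}_2$‑edges leaves a $\mathfrak{t}_2$‑edge in the trunk and is therefore excluded, while a shallower cut leaves an entire $\mathfrak{t}_2$‑subtree there and is likewise excluded. Everything else is bookkeeping with the recursions of $\Pi$, $\Delta_{\text{\tiny{BCK}}}$ and $\mathscr{F}$.
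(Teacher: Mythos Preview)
Your proof is correct and follows essentially the same approach as the paper's: reduce to a single tree via Leibniz and the fact that $P_A$ kills non-letter trunks, then differentiate the outermost integral and match with the unique term of $(\id\otimes P_A)\Delta_{\text{\tiny BCK}}T$. The only difference is packaging: the paper invokes the $\star$-decomposition \eqref{star_decomp} and Proposition~\ref{int_decomp} directly, so that your $C^{*}$ is their $\prod_j T_j$, your $\CI_{o_2}(\lambda_k L^{*})$ is their $T_r$, and your factorisation $\Pi(G)(t)=\Pi(C^{*})(t)\,e^{it\mathscr{F}(L^{*})}$ together with the ``main obstacle'' are exactly the content of $(\Pi T)(t)=-i|\nabla|^\alpha(T_r)\int_0^t e^{is\mathscr{F}(T_r)}\prod_j(\Pi T_j)(s)\,ds$ and $(\id\otimes P_A)\Delta_{\text{\tiny BCK}}T=\prod_j T_j\otimes T_r$, which the paper states in one line each.
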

		\begin{proof}
			Let us consider a forest $F = \prod_{j=1}^p T_j$, then
			\begin{equs}
				\partial_t  \Pi \left( F \right) = \sum_{\ell=1}^p \partial_t \left( \Pi  T_{\ell} \right) \prod_{j \neq \ell} \Pi T_j
			\end{equs}
			on the other side
			\begin{equs}
			\left(\id \otimes P_{A} \right)  \Delta_{\normalfont\text{\tiny{BCK}}}	\prod_{j=1}^p T_j = \sum_{\ell=1}^p (\left(\id \otimes P_{A} \right)  \Delta_{\normalfont\text{\tiny{BCK}}} T_{\ell}) \, ( \one \otimes \prod_{j \neq \ell}  T_j )
			\end{equs}
			which is due to the fact that $P_A$ keeps only letters of $A$, excluding any forest.
			Therefore, we can focus on the case when $F$ is reduced to a decorated tree $T$.
			Then, we can decompose $T$ as in \eqref{star_decomp} 
			\begin{equs}
			T = 	\prod_{j=1}^p T_j 	\star T_r
			\end{equs}
			then from Proposition \ref{int_decomp}, one has
			\begin{equs}
				(\Pi T)(t) = - i |\nabla|^\alpha(T_r) \int_0^t e^{i s  \mathscr{F}(T_r)}
		\prod_{j=1}^p (\Pi T_j)(s)		ds.
				\end{equs}
				Taking the derivative in time, one gets
					\begin{equs}
				\partial_t	(\Pi T)(t) =  - i |\nabla|^\alpha(T_r) e^{i t  \mathscr{F}(T_r)}
					\prod_{j=1}^p (\Pi T_j)(t).
				\end{equs}
				On the other side
				\begin{equs}
				 \left(\id \otimes P_{A} \right)  \Delta_{\normalfont\text{\tiny{BCK}}} T =
				 \prod_{j=1}^p T_j 	\otimes T_r
				\end{equs}
				which is due again to the projection $P_{A}$ that only keeps  elements of $A$. This allows us to conclude.
			\end{proof}
			
			The next proposition makes  a derivative in time appear and performs a Taylor expansion on some lower part, followed by several integrations by parts. This creates a local error which is precisely described via some lower parts of some Fourier operators.
			\begin{proposition} \label{partial_derivative}
				One has for a word $w$ and $r \in \mathbb{N}$
				\begin{equs}
			&	\sum_{m=0}^r	\Psi^{n,r}_{m}\left( Tw \right)(t) = \sum_{m=0}^{r}  \partial_t 	\Psi^{n,r}_{m,0} \left( Tw \right)(t) \\ & +\sum_{\tilde{m}} \mathcal{O}(t^{r+|w| +1} |\nabla|^\alpha(Tw)\prod_{j=1}^{|w|+1}
				\mathscr{F}^{n,r}_{\!{\tiny\tiny \normalfont \text{low} }}((Tw)_{[j]}, \tilde{m}_{[j]})^{r + j-m_j})
				\end{equs}
				where $ \tilde{m} = m_{|w|+1}...m_1 $ with $m_1 =0$ and $m_j \leq r+j$. We have also used the notation
				\begin{equs}
					|\nabla|^\alpha(Tw) =|\nabla|^\alpha(T)  \prod_{j=1}^{|w|} |\nabla|^\alpha(w_j).
				\end{equs}
%				Otherwi se,
%				\begin{equs}
%					\Psi_0 \left( \cdot \right)(t) = \sum_{m=1}^{r} \left( \partial_t 	\Psi_m^0 \left( \cdot \right)(t) + \mathcal{O}(t^{r+1} \mathscr{F}_{\!{\tiny\tiny \text{low} }}(\cdot)^{r-m+1}) \right).
%				\end{equs}
				\end{proposition}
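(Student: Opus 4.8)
\emph{Proof strategy.} The statement is the word‑decorated, combinatorial incarnation of the elementary integration by parts carried out for NLS in Section~\ref{Sec::4}, and I would prove it by induction on the length $|w|$. The base case is a direct computation: by \eqref{itermediate_psi}--\eqref{itermediate_psi_bis} the left‑hand side is $i|\nabla|^\alpha(T)e^{it\mathscr{F}(T)}$ times an un‑split block, one splits the relevant phase as $\mathscr{F}(T)=\mathscr{F}^{n,r}_{\!{\tiny\text{dom}}}(T,0)+\mathscr{F}^{n,r}_{\!{\tiny\text{low}}}(T,0)$ according to Definition~\ref{adaptative_splitting}, Taylor‑expands the lower exponential as in \eqref{Taylor_low}, and applies Proposition~\ref{prop_inte_parts} to each monomial term; this reproduces $\sum_{m}\partial_t\Psi^{n,r}_{m,0}$ of the relevant word together with a remainder of the announced form (compare the expansion of $(\Pi T)(t)$ in Example~\ref{ex_psi}).

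\emph{Inductive step.} Fix $m$ and, using \eqref{itermediate_psi}, write $\Psi^{n,r}_{m}(Tw)(t)=i|\nabla|^\alpha(T)\,e^{it\mathscr{F}(T)}\,\Psi^{n,r}_{m,0}(w)(t)$. The only oscillation in $\Psi^{n,r}_{m,0}(w)$ is $e^{it\mathscr{F}^{n,r}_{\!{\tiny\text{dom}}}(w,\cdot)}$; multiplying by $e^{it\mathscr{F}(T)}$ and invoking the defining relation of Definition~\ref{adaptative_splitting}, namely $\mathscr{F}(T)+\mathscr{F}^{n,r}_{\!{\tiny\text{dom}}}(w,\cdot)=\mathscr{F}^{n,r}_{\!{\tiny\text{dom}}}(Tw,\cdot)+\mathscr{F}^{n,r}_{\!{\tiny\text{low}}}(Tw,\cdot)$, I factor it as $e^{it\mathscr{F}^{n,r}_{\!{\tiny\text{dom}}}(Tw,\cdot)}e^{it\mathscr{F}^{n,r}_{\!{\tiny\text{low}}}(Tw,\cdot)}$ and Taylor‑expand the lower factor to the order $p=r+|Tw|-1-m$ dictated by \eqref{bound_Taylor_expansion}. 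Recentering each Taylor coefficient via $\tfrac{t^{m+p'}}{m!\,p'!}=\binom{m+p'}{p'}\tfrac{t^{m+p'}}{(m+p')!}$ produces the binomial weights $\binom{m_{j}+p_{j}}{p_{j}}$, and the truncation error is $\mathcal{O}\big(t^{m+p+1}|\nabla|^\alpha(T)\,\mathscr{F}^{n,r}_{\!{\tiny\text{low}}}(Tw,\cdot)^{p+1}\,\times(\text{structure from }w)\big)$; since the prefactor $t^{m}$ of $\Psi^{n,r}_{m,0}(w)$ combines with the order‑$(p+1)$ remainder $t^{r+|w|+1-m}$ to give the uniform degree $t^{r+|w|+1}$, and $p+1=r+|w|+1-m_{|w|+1}$, this contributes exactly the $j=|w|+1$ factor $\mathscr{F}^{n,r}_{\!{\tiny\text{low}}}((Tw)_{[|w|+1]},\tilde{m}_{[|w|+1]})^{r+|w|+1-m_{|w|+1}}$ of the claimed product.

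\emph{Collecting total derivatives.} To each resulting term $\tfrac{t^{m+p'}}{(m+p')!}\,e^{it\mathscr{F}^{n,r}_{\!{\tiny\text{dom}}}(Tw,\cdot)}\times(\text{rest})$ I apply Proposition~\ref{prop_inte_parts} with $\Phi_{\!{\tiny\text{dom}}}$ replaced by $\mathscr{F}^{n,r}_{\!{\tiny\text{dom}}}(Tw,\cdot)$, monomial order $m+p'$, and the product $\bar v_{k_1}v_{k_2}v_{k_3}$ replaced by the processed block $\Psi^{n,r}_{m,0}(w)(t)$. This expresses the term as a sum over $0\le q\le m+p'$ of a total time‑derivative plus a term in which $\partial_t$ falls on $\Psi^{n,r}_{m,0}(w)$. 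Re‑indexing by $m_{|w|+1}=m$, $p_{|w|+1}=p'$, $q_{|w|+1}=q$ and combining with the indices already carried by the structure coming from $w$ — which supply the constraint $m_{j+1}=\sum_{\ell\le j}(p_\ell-q_\ell)$ for $j\le|w|$, the powers $i^{p_j+q_j-1}$, and the overall sign — the total‑derivative halves assemble precisely into the summand of Definition~\ref{def_Phi} for the word $Tw$, i.e. into $\sum_{m}\partial_t\Psi^{n,r}_{m,0}(Tw)(t)$; the degenerate case $\mathscr{F}^{n,r}_{\!{\tiny\text{dom}}}(Tw,\cdot)=0$ is covered by the convention $q_{|w|+1}=-1$, in which no integration by parts is performed. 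The remaining halves $-\,e^{it\mathscr{F}^{n,r}_{\!{\tiny\text{dom}}}(Tw,\cdot)}\partial_t\Psi^{n,r}_{m,0}(w)(t)$ are rewritten through the induction hypothesis applied to $w$: there $\partial_t\Psi^{n,r}_{m,0}(w)$ is itself a sum of $\Psi^{n,r}_{\cdot}(w)$‑terms up to the error attached to $w$, so these halves either feed back into total‑derivative contributions (after one further application of the identity for $w$) or are absorbed into the error, and the error attached to $w$ — a product of lower parts over $j\le|w|$ with exponents $r+j-m_j$ — multiplies the remainder produced in the previous step to yield $\prod_{j=1}^{|w|+1}\mathscr{F}^{n,r}_{\!{\tiny\text{low}}}((Tw)_{[j]},\tilde{m}_{[j]})^{r+j-m_j}$; one checks along the way that the $t$‑powers add up to $r+|w|+1$ and that the operators $|\nabla|^\alpha(w_j)$ multiply to $|\nabla|^\alpha(Tw)$.

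\textbf{Main obstacle.} The delicate point is the double layer of combinatorics: after the Taylor expansion and the repeated integrations by parts one must verify that the surviving total‑derivative terms reorganize \emph{exactly} into the multi‑index sum of Definition~\ref{def_Phi} — with the correct binomial coefficients, powers of $i$, signs, and the constraint linking the $m_j,p_j,q_j$ — and that the finitely many truncation remainders, produced at the various $t$‑degrees $m_j$, telescope into a single bound carrying precisely the exponent $r+j-m_j$ of each lower part and no spurious lower‑order contributions. Controlling the latter uses that $\deg$ is sub‑multiplicative for the products defining $\mathscr{F}^{n,r}_{\!{\tiny\text{dom}}}$, together with the guard \eqref{cond_split}, which guarantees that replacing the full phase by its dominant part costs no more regularity than advertised.
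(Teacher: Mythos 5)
Your overall toolkit is the right one (expand via Definition~\ref{def_Phi}, merge the phase $e^{it\mathscr{F}(T)}$ with the dominant oscillation of $w$, split it as dominant plus lower part of $Tw$ via Definition~\ref{adaptative_splitting}, Taylor-expand the lower exponential, integrate by parts repeatedly, re-index $p_{|w|+1},q_{|w|+1}$ and sum over $m$), and up to that point you are essentially reproducing the paper's computation. But your ``Collecting total derivatives'' step contains a genuine structural gap. The key fact, which the paper's (non-inductive) proof exploits and your write-up contradicts, is that once you have pulled the monomial $\tfrac{t^m}{m!}$ and the oscillation $e^{it(\mathscr{F}(T)+\mathscr{F}^{n,r}_{\mathrm{dom}}(w,\hat m))}$ out of $\Psi^{n,r}_{m}(Tw)$, the remaining coefficient coming from the word $w$ (the product over $j\le|w|$ of binomials, powers of $i$, operators $|\nabla|^\alpha(w_j)$ and ratios of lower by dominant parts) is \emph{constant in time}. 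Hence each Taylor term $\tfrac{t^{m+p}}{m!\,p!}\,i^p\,\mathscr{F}^{n,r}_{\mathrm{low}}(Tw,\tilde m)^p e^{it\mathscr{F}^{n,r}_{\mathrm{dom}}(Tw,\tilde m)}$ is, after the repeated integration by parts of Proposition~\ref{prop_inte_parts}, an \emph{exact} time derivative: there is no residual term in which $\partial_t$ falls on a ``processed block''. Your proposal instead applies Proposition~\ref{prop_inte_parts} with the product $\bar v_{k_1}v_{k_2}v_{k_3}$ replaced by $\Psi^{n,r}_{m,0}(w)(t)$, thereby generating leftover terms $-e^{it\mathscr{F}^{n,r}_{\mathrm{dom}}(Tw,\cdot)}\partial_t\Psi^{n,r}_{m,0}(w)(t)$. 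Either these terms do not exist (if, as in your second paragraph, you have already stripped the oscillation and the $t^m$ from $\Psi^{n,r}_{m,0}(w)$ and merged them into the phase and the Taylor monomial, in which case the induction on $|w|$ is superfluous and the claim that such terms appear is simply wrong), or, if you keep $\Psi^{n,r}_{m,0}(w)(t)$ intact as a time-dependent factor, you have double-counted its oscillation in the factorisation $e^{it\mathscr{F}^{n,r}_{\mathrm{dom}}(Tw,\cdot)}e^{it\mathscr{F}^{n,r}_{\mathrm{low}}(Tw,\cdot)}$, and the computation is inconsistent.

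Moreover, the mechanism you offer for disposing of those leftover terms --- that they ``feed back into total-derivative contributions after one further application of the identity for $w$ or are absorbed into the error'' --- is not a proof and is false as stated: such terms would be of the same size as the main contributions, not of remainder size, and the induction hypothesis (which compares $\sum_m\Psi^{n,r}_m(w)$ with $\sum_m\partial_t\Psi^{n,r}_{m,0}(w)$ plus an error) does not apply to $\partial_t\Psi^{n,r}_{m,0}(w)$ multiplied by the extra factors $\mathscr{F}^{n,r}_{\mathrm{low}}(Tw,\cdot)^{p}\,\mathscr{F}^{n,r}_{\mathrm{dom}}(Tw,\cdot)^{-q-1}e^{it\mathscr{F}^{n,r}_{\mathrm{dom}}(Tw,\cdot)}$. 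The repair is to drop the induction altogether: write out $\Psi^{n,r}_m(Tw)$ from \eqref{itermediate_psi} and Definition~\ref{def_Phi}, observe that its entire $t$-dependence sits in $\tfrac{t^m}{m!}e^{it(\mathscr{F}(T)+\mathscr{F}^{n,r}_{\mathrm{dom}}(w,\hat m))}$, Taylor-expand the lower part of $Tw$ up to order $r+|w|-m$ (this alone produces the single remainder of order $t^{r+|w|+1}$ with the exponent $r+|w|+1-m$ on the last lower part, multiplied by the time-independent prefactor carrying the lower parts of $w_{[j]}$ for $j\le|w|$), turn each surviving term into a total derivative, set $p_{|w|+1}=p$, $q_{|w|+1}=q$, and sum over $m$ to recognise $\sum_{m=0}^r\partial_t\Psi^{n,r}_{m,0}(Tw)(t)$; the incompatibility of indices and the convention $q=-1$ when the dominant part vanishes then need only be checked once, at this level. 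Finally, note that no appeal to the guard \eqref{cond_split} or to any submultiplicativity of $\deg$ is needed for this proposition; the error is simply recorded in terms of the lower parts.
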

				\begin{proof}
		From Definition \ref{def_Phi},	one has
					\begin{equs}
					\Psi_{m}^{n,r} \left( Tw \right)(t) & = -  i |\nabla|^\alpha(T) 	 
					\frac{t^{m}}{m!} \sum_{ m_{j+1}= \sum_{\ell \leq j } (p_{\ell}-q_{\ell})} \,
					 \prod_{j=1}^{|w|} i^{p_j + q_j-1}\binom{m_{j} + p_j }{p_j}   \\ & \times  i|\nabla|^\alpha(T_j) \frac{\mathscr{F}_{\!{\tiny\tiny \text{low} }}^{n,r}(w_{[j]},m_{[j]})^{p_j}	}{  \mathscr{F}^{n,r}_{\!{\tiny\tiny \text{dom} }}(w_{[j]},m_{[j]})^{q_j+1}}         e^{i t  (\mathscr{F}^{n,r}_{\!{\tiny\tiny \text{dom} }}(w,\hat{m})  + \mathscr{F}(T) )}.
					\end{equs}
				One proceeds with the Taylor expansion of the lower part:
					\begin{equs}
					\frac{t^{m}}{m!}	e^{i t (  \mathscr{F}^{n,r}_{\!{\tiny\tiny \text{dom} }}(w,\hat{m}) + \mathscr{F}(T) )}
					& = \frac{t^{m}}{m!}	e^{i t (  \mathscr{F}^{n,r}_{\!{\tiny\tiny \text{dom} }}(Tw,\tilde{m}) + \mathscr{F}^{n,r}_{\!{\tiny\tiny \text{dom} }}(Tw, \tilde{m})  )}
					\\  & = \sum_{p \leq  r+ |w| - m} \frac{t^{m+p}}{m! p!}  i^{p} \mathscr{F}^{n,r}_{\!{\tiny\tiny \text{low} }}(Tw, \tilde{m} )^{p} 	e^{i t  \mathscr{F}^{n,r}_{\!{\tiny\tiny \text{dom} }}(Tw, \tilde{m} )} \\ & + \mathcal{O}(t^{r+|w| +1} \mathscr{F}^{n,r}_{\!{\tiny\tiny \text{low} }}(Tw,  \tilde{m} )^{r+|w|+1-m} )
					\end{equs}
					where $ \tilde{m} = m \hat{m} $.
					From Proposition \ref{prop_inte_parts}, by performing several integrations by parts, one has
					\begin{equs}
	&	\frac{t^{m+p}}{m! p!}  i^{p} \mathscr{F}^{n,r}_{\!{\tiny\tiny \text{low} }}(Tw, \tilde{m} )^{p} 	e^{i t  \mathscr{F}^{n,r}_{\!{\tiny\tiny \text{dom} }}(Tw, \tilde{m} )} \\ & = 			\partial_t \left(  \sum_{q \leq m+ p}  	\frac{  (m+p)! t^{m+p-q}}{(m+p -q)!m! p!} i^{p +q-1} \frac{\mathscr{F}^{n,r}_{\!{\tiny\tiny \text{low} }}(Tw, \tilde{m} )^{p}}{\mathscr{F}^{n,r}_{\!{\tiny\tiny \text{low} }}(Tw, \tilde{m} )^{q+1}} e^{i t  \mathscr{F}^{n,r}_{\!{\tiny\tiny \text{dom} }}(Tw, \tilde{m} )} \right)
	\\ & = \partial_t \left(  \sum_{q \leq m+ p}  \binom{m+p}{p}	\frac{   t^{m+p-q}}{(m+p -q)!} i^{p +q-1} \frac{\mathscr{F}^{n,r}_{\!{\tiny\tiny \text{low} }}(Tw, \tilde{m} )^{p}}{\mathscr{F}^{n,r}_{\!{\tiny\tiny \text{low} }}(Tw, \tilde{m} )^{q+1}} e^{i t  \mathscr{F}^{n,r}_{\!{\tiny\tiny \text{dom} }}(Tw, \tilde{m} )} \right). 
						\end{equs}
%						We first start with $\Psi_0 \left( \cdot  \right)(t)$, then
%						\begin{equs}
%							\Psi_0 \left(Tw  \right)(t) & = \partial_t \left(  \sum_{m=1}^r\sum_{m = (\ell -p)}  	\frac{   t^{m}}{m!} i^{\ell +p-1} \frac{\mathscr{F}_{\!{\tiny\tiny \text{low} }}(Tw )^{\ell}}{\mathscr{F}_{\!{\tiny\tiny \text{low} }}(Tw )^{p+1}} e^{i t  \mathscr{F}_{\!{\tiny\tiny \text{dom} }}(Tw )} \right)
%							\\ & + \sum_{m=1}^r \mathcal{O}(t^{r+1} \mathscr{F}_{\!{\tiny\tiny \text{low} }}(wT)^{r-m+1}).
%							\\ & = \sum_{m=1}^{r} \left( \partial_t 	\Psi_m^0 \left( Tw  \right)(t) + \mathcal{O}(t^{r+1} \mathscr{F}_{\!{\tiny\tiny \text{low} }}(Tw )^{r-m+1}) \right).
%						\end{equs}
%				When $m \neq 0$, 
One gets
				\begin{equs}
				&	\Psi^{n,r}_m \left(Tw  \right)(t)  = -	\partial_t \sum_{ m_{j+1}= \sum_{\ell \leq j } (p_{\ell}-q_{\ell}) } \, \prod_{j=1}^{|w|} i^{p_j + q_j-1}\binom{m_j + p_j }{p_j}   \\ & \times i |\nabla|^\alpha(w_j) \frac{\mathscr{F}^{n,r}_{\!{\tiny\tiny \text{low} }}(w_{[j]},m_{[j]})^{p_j}	}{  \mathscr{F}^{n,r}_{\!{\tiny\tiny \text{dom} }}(w_{[j]},m_{[j]})^{q_j+1}} \\ &  \sum_{p \leq  r+ |w|-m} \sum_{q \leq m+ p}  \binom{m+p}{p}	\frac{   t^{m+p-q}}{(m+p -q)!} i^{p +q-1} \\ & \times  i|\nabla|^\alpha(T)	  \frac{\mathscr{F}^{n,r}_{\!{\tiny\tiny \text{low} }}(Tw, \tilde{m} )^{p}}{\mathscr{F}^{n,r}_{\!{\tiny\tiny \text{dom} }}(Tw,\tilde{m} )^{q+1}} e^{i t  \mathscr{F}^{n,r}_{\!{\tiny\tiny \text{dom} }}(Tw, \tilde{m})}
				\\ & +  \sum_{\tilde{m}} \mathcal{O}(t^{r+|w| +1} |\nabla|^\alpha(Tw) \prod_{j=1}^{|w|+1}\mathscr{F}^{n,r}_{\!{\tiny\tiny \text{low} }}((Tw)_{[j]}, \tilde{m}_{[j]})^{r + j-m_j}) .
				\end{equs}
				Then, one fixes $ p_{|w|+1} = p $ and $ q_{|w|+1}  = q $. By summing over $m$, one gets
					\begin{equs}
						\sum_{m=0}^r \Psi^{n,r}_m \left(T w \right)(t)  & = \sum_{m=0}^r \partial_t \sum_{ m_{j+1}= \sum_{\ell \leq j } (p_{\ell}-q_{\ell}) } \, \prod_{j=1}^{|Tw|} i^{p_j + q_j-1}\binom{m_j + p_j }{p_j}   \\ & \times (-i |\nabla|^\alpha(w_j))\frac{\mathscr{F}^{n,r}_{\!{\tiny\tiny \text{low} }}(w_{[j]},m_{[j]})^{p_j}	}{  \mathscr{F}^{n,r}_{\!{\tiny\tiny \text{dom} }}(w_{[j]}, m_{[j]})^{q_j+1}}  	\frac{   t^{m}}{m!}  e^{i t  \mathscr{F}^{n,r}_{\!{\tiny\tiny \text{dom} }}(Tw, \tilde{m} )}
				\\ & 	  + \sum_{\tilde{m}} \mathcal{O}(t^{r+|w| +1} |\nabla|^\alpha(Tw) \prod_{j=1}^{|w|+1}\mathscr{F}^{n,r}_{\!{\tiny\tiny \text{low} }}((Tw)_{[j]}, \tilde{m}_{[j]})^{r + j-m_j}) 
					 \\ & = \sum_{m=0}^{r}  \partial_t 	\Psi^{n,r}_{m,0} \left( Tw \right)(t) \\ &+ \sum_{\tilde{m}} \mathcal{O}(t^{r+|w| +1} |\nabla|^\alpha(Tw) \prod_{j=1}^{|w|+1}\mathscr{F}^{n,r}_{\!{\tiny\tiny \text{low} }}((Tw)_{[j]}, \tilde{m}_{[j]})^{r + j-m_j}) 
				\end{equs}
				which allows us to conclude.
					\end{proof}
%					For the next proposition, we need to introduce an extra notation around the Buthcer-Connes-Kreimer coproduct. We define $ \tilde{\Delta}_{\text{\tiny{BCK}}} $ as
%					\begin{equs}
%					\tilde{\Delta}_{\text{\tiny{BCK}}} T = \Delta_{\text{\tiny{BCK}}} T - T \otimes \one + \one \otimes T
%					\end{equs}
%					which is called the reduced coproduct.
The next proposition makes appear the inductive construction of the low regularity scheme via the arborification map $ \mathfrak{a} $ and the Butcher-Connes-Kreimer type coproduct $ \Delta_{\normalfont\text{\tiny{BCK}}} $. The idea is to discretise the oscillatory integral associated with a subtree containing the root of the decorated tree we started with. Then, one wants to iterate the low regularity approach to the rest of the oscillatory integral which is disconnected from the first discretisation.
	\begin{proposition} \label{Taylor_phi} One has for $ \ell \in \mathbb{N}^*$
		\begin{equs}
		&	\Pi (T)(t) =  \sum_{p < \ell} ( \Psi^{n,r}_{0,1}( \mathfrak{a}_{p}(T))(t) - 	\Psi^{n,r}_{0,0}( \mathfrak{a}_{p}(T))(t) \\ &  + \sum_{m =0}^r
		  (\Pi \cdot)(t) \otimes\Psi^{n,r}_{m,0}\left( \mathfrak{a}_{p}(\cdot) \right)(t)) 	\Delta_{\normalfont\text{\tiny{BCK}}} T ) \\ &  + \sum_{m= 0}^r \int_0^t \left(  (\Pi \cdot)(s) \otimes \Psi_{m}^{n,r}\left( \mathfrak{a}_{\ell}(\cdot) \right)(s) \right)	\Delta_{\normalfont \text{\tiny{BCK}}} T ds) \\& +  \sum_{p < \ell} \mathcal{O}( (    (\Pi \cdot)(t) \otimes t^{r+p+ 1} \sum_{|\tilde{m}| =p }|\nabla|^\alpha(\cdot)\prod_{j=1}^{p}\mathscr{F}^{n,r}_{\normalfont \!{\tiny\tiny \text{low} }}((\mathfrak{a}_{p}(\cdot))_{[j]},\tilde{m}_{[j]})^{r+j-m_j}  ) 	\Delta_{\normalfont\text{\tiny{BCK}}} T )
		\end{equs}
		where we have made the following abuse of notations
		\begin{equs}
			\mathscr{F}^{n,r}_{\normalfont \!{\tiny\tiny \text{low} }}(\sum_{i} \lambda_i w_i,\tilde{m})^{r} & =\sum_{i} \lambda_i \mathscr{F}^{n,r}_{\normalfont \!{\tiny\tiny \text{low} }}( w_i,\tilde{m})^{r},
			\\
	(\mathfrak{a}_p(T))_{[j]} &= \sum_{i} \lambda_i (w_i)_{[j]}.
		\end{equs}
		Here, $w_i$ are words on the alphabet $A$ with the same length.
		\end{proposition}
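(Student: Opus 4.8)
The plan is to prove the identity by induction on $\ell \in \mathbb{N}^*$, using the factorization of $\Pi$ from Proposition~\ref{int_decomp} together with the co-associativity identity of Proposition~\ref{co-asso}, which links iterating $\Delta_{\text{\tiny{BCK}}}$ with the dual grafting $\curvearrowright^*$ underlying the arborification map $\mathfrak{a}$. First I would establish the base case $\ell = 1$: writing $T = \prod_{j} T_j \star T_r$ via \eqref{star_decomp}, Proposition~\ref{int_decomp} gives $(\Pi T)(t) = -i|\nabla|^\alpha(T_r)\int_0^t e^{is\mathscr{F}(T_r)}\prod_j (\Pi T_j)(s)\,ds$. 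On this expression I would insert the Taylor expansion \eqref{Taylor_low_phi} of the lower part attached to the root letter $\mathfrak{a}_1(T) = P_A(T_r)\cdot(\text{rest})$, followed by the integrations by parts packaged in Proposition~\ref{prop_inte_parts}; the resulting boundary terms are exactly the $\Psi^{n,r}_{0,1} - \Psi^{n,r}_{0,0}$ and $\sum_m \Psi^{n,r}_{m,0}$ summands for $p=0$ (the primitive of the derivative, evaluated at $t$ and at $0$), while the new integral term carries $\Psi^{n,r}_m(\mathfrak{a}_1(\cdot))$, and the Taylor remainder produces the $\mathcal{O}(\cdot)$ term. Here the factorization of $\Delta_{\text{\tiny{BCK}}} T = \sum (\text{cut pieces})$ matches the factorization of $\Pi$ across the forest product thanks to Proposition~\ref{partial_t_pi}.

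For the inductive step, assume the identity holds at level $\ell$ and apply Proposition~\ref{partial_derivative} to the last integral term: $\sum_m \Psi^{n,r}_m(Tw) = \sum_m \partial_t \Psi^{n,r}_{m,0}(Tw) + (\text{low-part error})$ with $w = \mathfrak{a}_\ell(\cdot)$ running over the appropriate tensor slot. Integrating the $\partial_t$ against $\int_0^t$, the antiderivative splits into an evaluation at $t$ — which, combined with the co-associativity Proposition~\ref{co-asso} applied to $(\id \otimes P_A \otimes \id)(\Delta_{\text{\tiny{BCK}}} \otimes \id)\Delta_{\text{\tiny{BCK}}} = (\id \otimes P_A \otimes \id)(\id \otimes \curvearrowright^*)\Delta_{\text{\tiny{BCK}}}$ and the recursive definition \eqref{arbo_new} of $\mathfrak{a}_{p}$ — reassembles into the $p = \ell$ contribution of the finite sum (this is precisely where $\mathfrak{a}_{\ell} = \mathcal{M}_c(P_A \otimes \mathfrak{a}_{\ell-1})\curvearrowright^*$ enters), and an evaluation at $0$ giving the $\Psi^{n,r}_{0,0}$ piece. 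The genuinely new integral term, now at level $\ell+1$, appears because Proposition~\ref{partial_derivative}'s remaining integrations-by-parts feed back a fresh $\int_0^t (\Pi\cdot)(s)\otimes \Psi^{n,r}_m(\mathfrak{a}_{\ell+1}(\cdot))(s)\,ds$ against $\Delta_{\text{\tiny{BCK}}} T$; the error terms accumulate into the stated $\mathcal{O}$-sum, using the abuse of notation for $\mathscr{F}^{n,r}_{\text{\tiny{low}}}$ on linear combinations of equal-length words that is declared in the statement.

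I expect the main obstacle to be the bookkeeping of the combinatorial factors — the binomials $\binom{m_j + p_j}{p_j}$, the constraints $p_j = r+j-1-m_j$ and $m_{j+1} = \sum_{\ell\le j}(p_\ell - q_\ell)$ from \eqref{bound_Taylor_expansion}, and the shift $m' = m+1$ appearing already in the proof of Proposition~\ref{prop_inte_parts}. Each integration by parts introduces a factor $t^n$ that can be differentiated again at the next step, so one must verify that the factors produced by Proposition~\ref{partial_derivative} at level $\ell$ telescope correctly with the index conventions in Definition~\ref{def_Phi} to reproduce $\Psi^{n,r}_{m,0}(\mathfrak{a}_{\ell}(\cdot))$ exactly, including the degenerate conventions $q_j = -1$ when the dominant part vanishes and the $a\delta_{m,0}$ term distinguishing evaluation at $t$ versus $0$. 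A secondary subtlety is that $\curvearrowright^*$ is a derivation whereas $(\Delta_{\text{\tiny{BCK}}}\otimes\id)\Delta_{\text{\tiny{BCK}}}$ is multiplicative; as noted in the proof of Proposition~\ref{co-asso}, the projection $P_A$ restores the derivation property, and one must make sure the induction on the forest product is threaded through consistently so that disconnected sub-integrals are treated independently, which is exactly what the $\curvearrowright^* (F_1\cdot F_2) = \curvearrowright^* F_1 (F_2\otimes\one) + (F_1\otimes\one)\curvearrowright^* F_2$ rule guarantees.
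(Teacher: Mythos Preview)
Your overall architecture is right --- induction on $\ell$, with Propositions~\ref{int_decomp}, \ref{partial_derivative}, \ref{partial_t_pi} and \ref{co-asso} as the engines --- but the mechanics are misassigned in two places, and this would prevent the argument from going through as written.

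\textbf{Base case.} For $\ell=1$ the sum $\sum_{p<1}$ is empty: there are no $\Psi^{n,r}_{0,1}-\Psi^{n,r}_{0,0}$ or $\Psi^{n,r}_{m,0}$ boundary terms and no $\mathcal{O}$-terms yet. The case $\ell=1$ is purely formal: with $T=\prod_j T_j\star T_r$, Proposition~\ref{int_decomp} gives $(\Pi T)(t)=-i|\nabla|^\alpha(T_r)\int_0^t e^{is\mathscr{F}(T_r)}\prod_j(\Pi T_j)(s)\,ds$, and by \eqref{itermediate_psi_bis} one has $-i|\nabla|^\alpha(T_r)e^{is\mathscr{F}(T_r)}=\Psi^{n,r}_0(T_r)(s)$, which is exactly the integral term with $\mathfrak{a}_1$. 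No Taylor expansion and no integration by parts occur at this stage; all of that is deferred to the inductive step via Proposition~\ref{partial_derivative}. Your base case as described would try to produce nonexistent $p=0$ summands.

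\textbf{Inductive step.} You have swapped the roles of the two pieces coming out of the integration by parts. After Proposition~\ref{partial_derivative} turns $\sum_m\Psi^{n,r}_m(\mathfrak{a}_\ell)$ into $\sum_m\partial_t\Psi^{n,r}_{m,0}(\mathfrak{a}_\ell)+(\text{error})$, the IBP of $\int_0^t(\Pi\cdot)\,\partial_s\Psi^{n,r}_{m,0}$ produces (i) the boundary evaluation at $t$, which \emph{directly} gives the $p=\ell$ contribution $((\Pi\cdot)(t)\otimes\Psi^{n,r}_{m,0}(\mathfrak{a}_\ell(\cdot))(t))\Delta_{\text{\tiny BCK}}T$ together with the $\Psi^{n,r}_{0,1}-\Psi^{n,r}_{0,0}$ piece when $\ell=|T|_{\text{\tiny ord}}$; and (ii) the residual integral $-\int_0^t(\partial_s(\Pi\cdot)\otimes\Psi^{n,r}_{m,0}(\mathfrak{a}_\ell(\cdot)))\Delta_{\text{\tiny BCK}}T\,ds$. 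It is to this \emph{integral} term that one applies Proposition~\ref{partial_t_pi} to replace $\partial_s\Pi$ by $(\Pi\otimes(-i|\nabla|^\alpha e^{is\mathscr{F}}))(\id\otimes P_A)\Delta_{\text{\tiny BCK}}$, then Proposition~\ref{co-asso} to trade $(\Delta_{\text{\tiny BCK}}\otimes\id)\Delta_{\text{\tiny BCK}}$ for $(\id\otimes\curvearrowright^*)\Delta_{\text{\tiny BCK}}$, and finally the forward recursion $\mathfrak{a}_{\ell+1}=\mathcal{M}_c(P_A\otimes\mathfrak{a}_\ell)\curvearrowright^*$ together with \eqref{itermediate_psi} (which absorbs the extra letter into $\Psi^{n,r}_m$) to obtain the level-$(\ell+1)$ integral. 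Co-associativity is not used to build the boundary term, and Proposition~\ref{partial_derivative} does not by itself generate the new integral --- it only converts $\Psi_m$ to $\partial_t\Psi_{m,0}$ plus error. Once you reassign these two roles, your outline matches the paper's proof.
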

		\begin{proof} One proceeds by recurrence on $\ell$. We start by $\ell=1$.
			For $T = \prod_{j=1}^p T_{j} \star T_{r} $ as in \eqref{star_decomp}, one gets
			\begin{equs}
				\int_0^t \left(  (\Pi \cdot)(s) \otimes \Psi^{n,r}_{0}\left( \mathfrak{a}_{1}(\cdot) \right)(s) \right)	\Delta_{\text{\tiny{BCK}}} T ds = \int_0^t \prod_{j=1}^p \left(  (\Pi T_{j})(s)  \Psi^{n,r}_{0}\left( T_{r} \right)(s) \right)  ds.
			\end{equs}
			On the other hand, one has from Proposition \ref{int_decomp}
			\begin{equs}
				\Pi (T)(t) = - i |\nabla|^\alpha(T_r)
				\int_0^t e^{i s \mathscr{F}(T_{r}) } \prod_{j=1}^p (\Pi T_{j})(s) ds.
			\end{equs}
			We conclude by using \eqref{itermediate_psi_bis} which gives
			\begin{equs}
			- i |\nabla|^\alpha(T_r)	e^{i s \mathscr{F}(T_{r}) } = \Psi^{n,r}_{0}\left( T_{r} \right)(s).
			\end{equs}
			Now, we suppose the property true for some  $\ell$. Then, using Proposition \ref{partial_derivative}, one has
			\begin{equs}
			\sum_{m =0}^r	\int_0^t & \left(  (\Pi \cdot)(s) \otimes \Psi^{n,r}_{m}\left( \mathfrak{a}_{\ell}(\cdot) \right)(s) \right)	\Delta_{\text{\tiny{BCK}}} T ds \\ &  = \sum_{m =0}^r\int_0^t \left(  (\Pi \cdot)(s) \otimes \partial_t \Psi^{n,r}_{m,0}\left( \mathfrak{a}_{\ell}(\cdot) \right)(s) \right)	\Delta_{\text{\tiny{BCK}}} T ds 
			\\ &	  +  \mathcal{O}( (    (\Pi \cdot)(t) \otimes t^{r+\ell+ 1} \sum_{|\tilde{m}| =\ell }|\nabla|^\alpha(\cdot)\prod_{j=1}^{\ell}\mathscr{F}^{n,r}_{\!{\tiny\tiny \text{low} }}((\mathfrak{a}_{\ell}(\cdot))_{[j]},\tilde{m}_{[j]})^{r+j-m_j}  ) 	\Delta_{\text{\tiny{BCK}}} T ).	\end{equs}
			Via an integration by parts, one gets
			\begin{equs}
				&\int_0^t  \left(  (\Pi \cdot)(s) \otimes \partial_t \Psi^{n,r}_{m,0}\left( \mathfrak{a}_{\ell}(\cdot) \right)(s) \right)	\Delta_{\text{\tiny{BCK}}} T ds \\ & = - \int_0^t \left(  \partial_t  (\Pi \cdot)(s) \otimes\Psi^{n,r}_{m,0}\left( \mathfrak{a}_{\ell}(\cdot) \right)(s) \right)	\Delta_{\text{\tiny{BCK}}} T ds 
				\\ &+    \left(    (\Pi \cdot)(t) \otimes\Psi^{n,r}_{m,0}\left( \mathfrak{a}_{\ell}(\cdot) \right)(t) \right)	\Delta_{\text{\tiny{BCK}}} T
				\\ & + \one_{\lbrace \ell =|T|_{\text{\tiny{ord}}} \rbrace \cap \, \lbrace  m = 0 \rbrace} \left(  \Psi^{n,r}_{0,1}( \mathfrak{a}_{\ell}(T))(t) - 	\Psi^{n,r}_{0,0}( \mathfrak{a}_{\ell}(T))(t) \right).
			\end{equs}		
			This is due to the fact that 
			\begin{equs}
				\int_0^t \partial_t \left(   \Psi^{n,r}_{0,0}\left( \mathfrak{a}_{\ell}(T) \right)(s) \right)  ds  =  \Psi^{n,r}_{0,1}\left( \mathfrak{a}_{\ell}(T) \right)(t).
			\end{equs}
			Then, by using Proposition \ref{partial_t_pi}, one has
			\begin{equs}
				\, & - \left(  \partial_t  (\Pi \cdot)(s) \otimes\Psi^{n,r}_{m,0}\left( \mathfrak{a}_{\ell}(\cdot) \right)(s) \right)	\Delta_{\text{\tiny{BCK}}}
				\\ & =  \left( (\Pi \cdot)(s) \otimes i | \nabla|^\alpha(\cdot)    e^{i  s \mathscr{F}(\cdot)} \otimes \Psi^{n,r}_{m,0}\left( \cdot \right)(s) \right) \left((\id \otimes P_{A}) \Delta_{\text{\tiny{BCK}}}  \otimes \, \mathfrak{a}_{\ell} \right) \Delta_{\text{\tiny{BCK}}} 
				\\ & = \left( (\Pi \cdot)(s) \otimes i | \nabla|^\alpha(\cdot)    e^{i  s \mathscr{F}(\cdot)} \otimes \Psi^{n,r}_{m,0}\left( \cdot \right)(s) \right) \left(\id \otimes P_{A}  \otimes \, \mathfrak{a}_{\ell} \right) (\id \, \otimes  \curvearrowright^{*}) \Delta_{\text{\tiny{BCK}}}
				\\ & = \left( (\Pi \cdot)(s) \otimes   \Psi_{m}^{n,r}\left( \cdot \right)(s) \right) \left(\id \otimes \mathcal{M}_c (P_{A}  \otimes \, \mathfrak{a}_{\ell}) \curvearrowright^{*} \right)  \Delta_{\text{\tiny{BCK}}} 
				\\  & = \left( (\Pi \cdot)(s) \otimes   \Psi_{m}^{n,r}\left( \cdot \right)(s) \right) \left(\id \otimes \mathfrak{a}_{\ell+1} \right)  \Delta_{\text{\tiny{BCK}}} 
			\end{equs}
			where 
			we have used for the third line Proposition \ref{co-asso}, for the fourth line that for $a \in A$ and $ w \in T(A) $
			\begin{equs}
				\, 	\left( i | \nabla|^\alpha(\cdot)  e^{i  s \mathscr{F}(\cdot)} \otimes \Psi^{n,r}_{m,0}\left( \cdot \right)(s) \right) \left( a  \otimes w \, \right)  & =   i | \nabla|^\alpha(a) e^{i  s \mathscr{F}(a)}  \Psi^{n,r}_{m,0}\left( w \right)(s) 
				\\ &  =  \Psi^{n,r}_{m}\left( a w \right)(s)
				\\ &  = \Psi^{n,r}_{m}\left( \cdot \right)(s) \circ  \mathcal{M}_c( a \otimes w ) 
			\end{equs}
			 and for the fifth line
			 \begin{equs}
			 	\mathfrak{a}_{\ell+1} = \mathcal{M}_c (P_{A}  \otimes \, \mathfrak{a}_{\ell}) \curvearrowright^{*}.
			 \end{equs}
			One gets from the previous computations
			\begin{equs}
			\int_0^t & \left(  \partial_t  (\Pi \cdot)(s) \otimes\Psi^{n,r}_{m,0}\left( \mathfrak{a}_{\ell}(\cdot) \right)(s) \right)	\Delta_{\text{\tiny{BCK}}} T ds \\&  = \int_0^t \left(    (\Pi \cdot)(s) \otimes\Psi_{m}^{n,r}\left( \mathfrak{a}_{\ell+1}(\cdot) \right)(s) \right)	\Delta_{\text{\tiny{BCK}}} T ds 
			\end{equs}
			which allows us to conclude.
			\end{proof}
			The next corollary is an immediate consequence of the previous proposition.
		\begin{corollary} \label{main_corollary} One gets
					\begin{equs}
				&	\Pi (T)(t) =   \Psi^{n,r}_{0,1}( \mathfrak{a}_{}(T))(t) - 	\Psi^{n,r}_{0,0}( \mathfrak{a}_{}(T))(t) \\ &   + \sum_{m =0}^r
					\left((\Pi \cdot)(t) \otimes\Psi^{n,r}_{m,0}\left( \mathfrak{a}_{}(\cdot) \right)(t)\right) 	\tilde{\Delta}_{\normalfont\text{\tiny{BCK}}} T   \\& +  \sum_{p \leq |T|_{\normalfont \text{\tiny{ord}}}} \mathcal{O}( (    (\Pi \cdot)(t) \otimes t^{r+p+ 1} \sum_{|\tilde{m}| =p }|\nabla|^\alpha(\cdot)\prod_{j=1}^{p}\mathscr{F}^{n,r}_{\normalfont\!{\tiny\tiny \text{low} }}((\mathfrak{a}_{p}(\cdot))_{[j]},\tilde{m}_{[j]})^{r+j-m_j}  ) 	\Delta_{\normalfont\text{\tiny{BCK}}} T )
				\end{equs}
				where 
				\begin{equs}
					\tilde{\Delta}_{\normalfont\text{\tiny{BCK}}} T =
						\Delta_{\normalfont\text{\tiny{BCK}}} T
						- \one \otimes T. 
					\end{equs}
		\end{corollary}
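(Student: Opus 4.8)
The plan is to read off the corollary from Proposition \ref{Taylor_phi} by pushing the truncation index $\ell$ past the grading of $T$. Concretely, I would apply Proposition \ref{Taylor_phi} with $\ell = |T|_{\normalfont\text{\tiny{ord}}}+1$. The basic fact to exploit is that $\mathfrak{a}_{p}$ is the homogeneous component of $\mathfrak{a}$ of size $p$ and that every decorated forest occurring in either tensor slot of $\Delta_{\normalfont\text{\tiny{BCK}}} T$ --- in particular $T$ itself, which sits in the summand $\one \otimes T$ --- has size at most $|T|_{\normalfont\text{\tiny{ord}}}$, so it is annihilated by $\mathfrak{a}_{p}$ for all $p \ge \ell$. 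I would use this in two places. First, it forces the integral remainder $\sum_{m=0}^{r}\int_{0}^{t}\bigl((\Pi\cdot)(s)\otimes\Psi_{m}^{n,r}(\mathfrak{a}_{\ell}(\cdot))(s)\bigr)\Delta_{\normalfont\text{\tiny{BCK}}} T\,ds$ to vanish identically, since $\mathfrak{a}_{\ell}$ kills each forest appearing in $\Delta_{\normalfont\text{\tiny{BCK}}} T$. Second, for each such forest $\sigma$ --- and for $\sigma = T$ --- it gives $\sum_{p<\ell}\mathfrak{a}_{p}(\sigma) = \sum_{p\ge 0}\mathfrak{a}_{p}(\sigma) = \mathfrak{a}(\sigma)$, so by linearity of the $\Psi$-maps in their word argument the truncated sums over $p$ collapse: $\sum_{p<\ell}\Psi^{n,r}_{0,1}(\mathfrak{a}_{p}(T))(t) = \Psi^{n,r}_{0,1}(\mathfrak{a}(T))(t)$, likewise for $\Psi^{n,r}_{0,0}$ and for the $\Psi^{n,r}_{m,0}$ sitting inside the coproduct term, while the remainder sum $\sum_{p<\ell}\mathcal{O}(\cdots)$ becomes exactly the $\sum_{p\le|T|_{\normalfont\text{\tiny{ord}}}}\mathcal{O}(\cdots)$ written in the corollary.

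The one point that needs an argument rather than a mere substitution is the passage from $\Delta_{\normalfont\text{\tiny{BCK}}} T$, which Proposition \ref{Taylor_phi} produces inside $\sum_{m=0}^{r}\bigl((\Pi\cdot)(t)\otimes\Psi^{n,r}_{m,0}(\mathfrak{a}(\cdot))(t)\bigr)\Delta_{\normalfont\text{\tiny{BCK}}} T$, to $\tilde{\Delta}_{\normalfont\text{\tiny{BCK}}} T = \Delta_{\normalfont\text{\tiny{BCK}}} T - \one \otimes T$ in the corollary. Here I would note that the summand $\one \otimes T$ carries $(\Pi\one)(t) = 1$ in its first slot, so its share of the coproduct term is controlled entirely by $\mathfrak{a}(T)$; tracing the recursion in the proof of Proposition \ref{Taylor_phi}, this is precisely the component of $\Delta_{\normalfont\text{\tiny{BCK}}} T$ whose processing produced the standalone terms, through the boundary contribution $\int_{0}^{t}\partial_{s}\Psi^{n,r}_{0,0}(\mathfrak{a}(T))(s)\,ds = \Psi^{n,r}_{0,1}(\mathfrak{a}(T))(t)$ of the integration by parts at the final induction step. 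Thus $\one \otimes T$ is already recorded in the explicit terms $\Psi^{n,r}_{0,1}(\mathfrak{a}(T))(t) - \Psi^{n,r}_{0,0}(\mathfrak{a}(T))(t)$, and removing it from the coproduct sum --- which leaves exactly the nontrivial admissible cuts of $T$, i.e.\ $\tilde{\Delta}_{\normalfont\text{\tiny{BCK}}} T$ --- gives the announced identity.

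I expect this $\one \otimes T$ bookkeeping to be the only genuine, if minor, obstacle: one must verify that the combined $\sum_{m=0}^{r}$ contributions of $\one \otimes T$ to the coproduct term of Proposition \ref{Taylor_phi}, together with its boundary contribution, collapse to $\Psi^{n,r}_{0,1}(\mathfrak{a}(T))(t) - \Psi^{n,r}_{0,0}(\mathfrak{a}(T))(t)$ and to nothing more. Everything else is a purely formal limit in $\ell$, relying only on the fact that $\mathfrak{a}$ is graded with $\mathfrak{a}_{p}$ vanishing above the grading of its argument, and that $\Pi$ sends the empty forest to $1$.
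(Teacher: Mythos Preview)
Your overall plan --- apply Proposition \ref{Taylor_phi} with $\ell > |T|_{\text{ord}}$ so the integral remainder vanishes, then collapse $\sum_{p}\mathfrak{a}_{p}=\mathfrak{a}$ --- is exactly the paper's. The divergence, and the gap, is in your handling of the passage from $\Delta_{\text{BCK}}$ to $\tilde{\Delta}_{\text{BCK}}$.

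Your claim that the $\one\otimes T$ contribution to the coproduct term is ``already recorded'' in the standalone $\Psi^{n,r}_{0,1}(\mathfrak{a}(T))-\Psi^{n,r}_{0,0}(\mathfrak{a}(T))$ is not correct. In the integration by parts inside the proof of Proposition \ref{Taylor_phi}, the summand $\one\otimes T$ produces \emph{two} boundary pieces: the $s=0$ piece is the standalone pair, but the $s=t$ piece is $\sum_{m=0}^{r}\Psi^{n,r}_{m,0}(\mathfrak{a}(T))(t)$, and this is precisely its share of the coproduct term with $\Delta_{\text{BCK}}T$. That second piece does not cancel with anything; if you drop $\one\otimes T$ from the coproduct you lose it, and the identity fails (Example \ref{discretisation_int} shows these $\Psi^{n,r}_{m,0}(\mathfrak{a}(T))$ terms are genuinely present in the final answer).

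The paper's mechanism is different and purely algebraic: it records the one-line identity
\[
\sum_{m=1}^{|T|_{\text{ord}}} (\id\otimes\mathfrak{a}_{m})\,\Delta_{\text{BCK}}T \;=\; (\id\otimes\mathfrak{a})\,\tilde{\Delta}_{\text{BCK}}T,
\]
which holds simply because $\mathfrak{a}_{m}(\one)=0$ for every $m\ge 1$. The point is that the $p$-sum in Proposition \ref{Taylor_phi} starts at $p=1$, and $\mathfrak{a}_{p}$ on the second slot then annihilates the primitive $T\otimes\one$ (the one carrying $\one$ on the \emph{right}); that is the summand one is entitled to drop, and $\one\otimes T$ survives on both sides, as it must. (The stated formula $\tilde{\Delta}_{\text{BCK}}T=\Delta_{\text{BCK}}T-\one\otimes T$ seems to have the tensor factors transposed --- the computation of $\tilde{\Delta}_{\text{BCK}}T_{3}=\one\otimes T_{3}+T_{2}\otimes T_{1}$ in Example \ref{discretisation_int}, and the way those contributions are used there, confirm that it is $T\otimes\one$ that is being removed.) No recursion-tracing is needed once you use this identity.
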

		\begin{proof}
				By taking $\ell$ strictly larger than $|T|_{\text{\tiny{ord}}}$, one has
			\begin{equs}
				\int_0^t \left(  (\Pi \cdot)(s) \otimes \Psi^{n,r}_{m}\left( \mathfrak{a}_{\ell}(\cdot) \right)(s) \right)	\Delta_{\text{\tiny{BCK}}} T ds = 0.
			\end{equs}
		 One  also notices that
			\begin{equs}
				\sum_{m =1}^{  |T|_{\text{\tiny{ord}}}} \left( \id \otimes	\mathfrak{a}_{m}(\cdot) \right)\Delta_{\text{\tiny{BCK}}} T = \left( \id \otimes 	\mathfrak{a}(\cdot) \right)\tilde{\Delta}_{\text{\tiny{BCK}}} T.
			\end{equs}
			Then, we conclude by applying  Proposition \ref{Taylor_phi},
			\end{proof}

	\begin{theorem} \label{main_theorem} One sets the following approximation of $ (\Pi T)(t) $
		\begin{equs}\label{main_formula_theorem}
			\begin{aligned}
			(\Pi^{n,r} T)(t) &=   \Psi^{n,r_T}_{0,1}( \mathfrak{a}_{}(T))(t) - 	\Psi^{n,r_T}_{0,0}( \mathfrak{a}_{}(T))(t) \\ &   + \sum_{m =0}^{r}
		\left( \mathcal{P}_{r-m} \left( 	(\Pi^{n,r-m} \cdot)(t) \right) \otimes\Psi^{n,r_T}_{m,0}\left( \mathfrak{a}_{}(\cdot) \right)(t) \right) 	\tilde{\Delta}_{\normalfont\text{\tiny{BCK}}} T 
		\end{aligned}
		\end{equs}
		where $r_T = r - |T|_{\text{\tiny{ord}}}$ and $\mathcal{P}_r$ removes all  the monomials of the form $ t^{p}$ with $p > r$. 
	Then,
		\begin{equs} \label{local_error_scheme}
			(\Pi T)(t) - (\Pi^{n,r} T)(t) = \mathcal{O}( t^{r+1}E^{n,r}(T) )
		\end{equs}
		where 
		\begin{equs} \label{local_error}
			\begin{aligned}
		&	E^{n,r}(T) =  \sum_{p < \ell} (    D^{\alpha}(\cdot)\otimes  \sum_{|\tilde{m}| =p }|\nabla|^\alpha(\cdot)\prod_{j=1}^{p}\mathscr{F}^{n,r_T}_{\normalfont \!{\tiny\tiny \text{low} }}((\mathfrak{a}_{p}(\cdot))_{[j]},\tilde{m}_{[j]})^{r_T+j-m_j}  ) 	\Delta_{\normalfont\text{\tiny{BCK}}} T 
		\\ & +  \sum_{m =0}^{r}
		\left( E^{n,r-m}_+(\cdot) \otimes \frac{1}{t^m}\Psi^{n,r_T}_{m,0}\left( \mathfrak{a}_{}(\cdot) \right)(t) \right) 	\tilde{\Delta}_{\normalfont\text{\tiny{BCK}}} T 
		\end{aligned}
		\end{equs}
		with
		\begin{equs}
			E^{n,r-m}_+(\prod_{j=1}^q T_j) = \sum_{j=1}^q	E^{n,r-m}(T_j).
		\end{equs}
		and 
		\begin{equs}
			 D^{\alpha}(\prod_{j=1}^q T_j) = \prod_{j=1}^q D^{\alpha}(T_j), \quad D^{\alpha}( 	\prod_{j=1}^p T_j 	\star T_r) =|\nabla|^\alpha(T_r) D^{\alpha}( 	\prod_{j=1}^p T_j ). 
		\end{equs}
		\end{theorem}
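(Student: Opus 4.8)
The plan is to prove \eqref{local_error_scheme} by induction on the size of $T$ (for instance, its number of nodes), using Corollary~\ref{main_corollary} as the exact identity that \eqref{main_formula_theorem} discretises. The first step is to apply Corollary~\ref{main_corollary} to $T$ \emph{with the order parameter $r$ replaced by $r_T=r-|T|_{\text{\tiny{ord}}}$}. This shift is the heart of the $t$-power count: in the remainder produced by Corollary~\ref{main_corollary} one finds a factor $t^{r_T+p+1}$ multiplied by $(\Pi\cdot)(t)$ evaluated on a left factor of $\Delta_{\text{\tiny{BCK}}}T$ of order $|T|_{\text{\tiny{ord}}}-p$; by Proposition~\ref{int_decomp} that $\Pi$ is $\mathcal{O}(t^{|T|_{\text{\tiny{ord}}}-p}\,D^{\alpha}(\cdot))$, so the product is $\mathcal{O}(t^{r+1}\,D^{\alpha}(\cdot)\,|\nabla|^\alpha(\cdot)\prod_j\mathscr{F}^{n,r_T}_{\mathrm{low}}(\cdots))$. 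Summed over the admissible cuts this is exactly $t^{r+1}$ times the first line of $E^{n,r}(T)$ in \eqref{local_error}, with $D^{\alpha}$ recording the $|\nabla|^\alpha$-modulus of $\Pi$ on a forest through its multiplicative/grafting rule.

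The second step replaces the remaining occurrences of $\Pi$ by the scheme itself. In the surviving term $\sum_{m}\bigl((\Pi\cdot)(t)\otimes\Psi^{n,r_T}_{m,0}(\mathfrak{a}(\cdot))(t)\bigr)\tilde{\Delta}_{\text{\tiny{BCK}}}T$, the arguments fed to $\Pi$ are proper subforests of $T$, so the recursion in \eqref{main_formula_theorem} is well founded and the induction applies. For such a forest $F$, the prefactor $t^{m}/m!$ carried by $\Psi^{n,r_T}_{m,0}$ forces the induction hypothesis to be invoked at order $r-m$: $(\Pi T')(t)=(\Pi^{n,r-m}T')(t)+\mathcal{O}(t^{(r-m)+1}E^{n,r-m}(T'))$ for each tree $T'$ of $F$, and multiplicativity of $\Pi$ together with $E^{n,r-m}_+(\prod_jT'_j)=\sum_jE^{n,r-m}(T'_j)$ upgrades this to $(\Pi F)(t)=(\Pi^{n,r-m}F)(t)+\mathcal{O}(t^{(r-m)+1}E^{n,r-m}_+(F))$. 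Truncating further by $\mathcal{P}_{r-m}$ discards only monomials $t^{p}$ with $p>r-m$, hence contributes an extra $\mathcal{O}(t^{(r-m)+1})$; after multiplication by the $t^{m}$ inside $\Psi^{n,r_T}_{m,0}$ both corrections become $\mathcal{O}(t^{r+1})$, weighted by the bounded quantity $\tfrac{1}{t^{m}}\Psi^{n,r_T}_{m,0}(\mathfrak{a}(\cdot))(t)$, which is precisely the weight appearing in the second line of \eqref{local_error}.

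Collecting the two steps, the main part of Corollary~\ref{main_corollary} with parameter $r_T$ becomes, after these substitutions, exactly $\Psi^{n,r_T}_{0,1}(\mathfrak{a}(T))(t)-\Psi^{n,r_T}_{0,0}(\mathfrak{a}(T))(t)+\sum_{m=0}^{r}\bigl(\mathcal{P}_{r-m}((\Pi^{n,r-m}\cdot)(t))\otimes\Psi^{n,r_T}_{m,0}(\mathfrak{a}(\cdot))(t)\bigr)\tilde{\Delta}_{\text{\tiny{BCK}}}T=(\Pi^{n,r}T)(t)$, where the upper summation limit may be taken as $r$ rather than $r_T$ since the constraints \eqref{bound_Taylor_expansion} make $\Psi^{n,r_T}_{m,0}$ vanish for the extra values of $m$; and the two remainders identified above assemble into $t^{r+1}E^{n,r}(T)$, giving \eqref{local_error_scheme}. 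The base case is the single leaf $\CI_{(\Labhom_1,0)}(\lambda_k)$, for which $\tilde{\Delta}_{\text{\tiny{BCK}}}$ is trivial and the right-hand side of \eqref{main_formula_theorem} collapses to $(\Pi\cdot)$ of that leaf with no error.

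The step I expect to be the main obstacle is the combinatorial bookkeeping of the three interlocking index shifts — $r\rightsquigarrow r_T$ when invoking Corollary~\ref{main_corollary}, $r\rightsquigarrow r-m$ inside the recursion, and the internal ranges $p_j=r+j-1-m_j$, $0\le q_j\le m_j+p_j$ in the definition of $\Psi^{n,r}_{m,a}$ — together with the verification that the binomial and factorial weights generated by Propositions~\ref{prop_inte_parts} and \ref{partial_derivative} survive the truncation $\mathcal{P}_{r-m}$ and recombine into the closed form of $E^{n,r}(T)$, in particular the multiplicative law for $D^{\alpha}$ and the additive law for $E^{n,r-m}_+$. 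The analytic input — one integral, hence one power of $t$, per $o_2$-edge, and one extra power of the lower phase per Taylor remainder — is routine once the algebra is organised in this way.
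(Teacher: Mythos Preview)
Your approach is essentially the paper's own: it too derives the theorem from Corollary~\ref{main_corollary} applied with parameter $r_T$, uses the bound $(\Pi T)(t)=\mathcal{O}(t^{|T|_{\text{\tiny ord}}}D^\alpha(T))$ to produce the first line of $E^{n,r}(T)$, and obtains the additive structure $E^{n,r-m}_+$ from the multiplicativity of $\Pi$ on forests; your write-up simply makes the recursive substitution $\Pi\to\Pi^{n,r-m}$ and the truncation $\mathcal{P}_{r-m}$ explicit where the paper is terse. One small correction: your justification for extending the $m$-sum from $r_T$ to $r$ --- that $\Psi^{n,r_T}_{m,0}$ vanishes for $m>r_T$ via \eqref{bound_Taylor_expansion} --- is not literally right, as the paper's own computation $\Psi^{2,0}_{2,0}(T_2T_1)=-i^2t^2/2\neq 0$ with $r_T=0$ shows; the actual admissible range of $m$ depends on the word length and on the $q_j=-1$ convention when the dominant part vanishes, and the paper is itself informal on this point, so this does not undermine your argument.
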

		\begin{proof}
	This is a consequence of Corollary \ref{main_corollary}. The first error term comes from this Corollary together with the bound:
	\begin{equs}
		(\Pi T)(t) = \mathcal{O}(t^{|T|_{\text{\tiny{ord}}}} D^{\alpha}(T)).
	\end{equs}
	For the other error terms, one notices that
	\begin{equs}
	(\Pi - 	\Pi^{n,r})(T \cdot \bar{T})(t)  &  = (\Pi - 	\Pi^{n,r})(T )(t)  (\Pi^{n,r} \bar{T} )(t)
	+ (\Pi T )(t)(\Pi - 	\Pi^{n,r})(\bar{T} )(t)
	\\ &=   \mathcal{O}(t^{r+1} E^{n,r}(T)  ) +  \mathcal{O}(t^{r+1} E^{n,r}(\bar{T})  )
	\end{equs}
which gives us the additive structure on the local error.	 	
	\end{proof}
	
	The previous theorem gives a low regularity approximation of oscillatory integrals of the form $(\Pi T)(t)$. Equipped with this approximation, one gives in the next corollary a low regularity scheme.
	
	\begin{corollary} \label{main_low_schemes}
	Using the same notations as in Theorem \ref{main_theorem}, one derives the following low regularity scheme:
	\begin{equs}\label{genscheme_low}
		U_{k}^{n,r}(t, u_0) =   \sum_{T \in \CT^{\leq r,k}_{0}} \frac{\Upsilon( T)(u_0)}{S(T)} (\Pi^{n,r}   T )(t)
	\end{equs}
	with the local error
	\begin{equs} \label{local_low}
	u_k(t) -	U_{k}^{n,r}(t, u_0)  = \sum_{T \in \CT^{\leq r,k}_{0}}  \mathcal{O}( t^{r+1}E^{n,r}(T) \Upsilon( T)(u_0) ).
	\end{equs}
		\end{corollary}
		\begin{proof}
			One has the following decomposition
			\begin{equs}
					u_k(t) -	U_{k}^{n,r}(t, u_0)  =  	u_k(t) -	U_{k}^{r}(t, u_0) + 		U_{k}^{r}(t, u_0) - U_{k}^{n,r}(t, u_0). 
			\end{equs}
			For the first difference, one applies Proposition \ref{tree_series}. Then for the second term, one observes:
			\begin{equs}
				U_{k}^{r}(t, u_0) - U_{k}^{n,r}(t, u_0) = \sum_{T \in \CT^{\leq r,k}_{0}} \frac{\Upsilon( T)(u_0)}{S(T)} \left( (\Pi T)(t)- (\Pi^{n,r}   T )(t) \right).
			\end{equs}
			We conclude by applying Theorem \ref{main_theorem}, in particular identity \eqref{local_error_scheme}.
			\end{proof}
	
	\begin{remark}
		In \cite{BS}, one also added the extra parameter $n \in \mathbb{N}^*$ for performing the discretisation  $ (\Pi^{n,r} \cdot) $. Here $n$ is the regularity a priori of the solution, meaning that we assume that the solution is  $ \mathcal{C}^{n}$ in space. With this information, one is able to write a simple scheme and perform a full Taylor expansion without integrating some dominant part and performing a resonance analysis. 
		\end{remark}
		
		\begin{remark}
			 Corollary \ref{main_low_schemes} provides a local error estimate for the low regularity scheme \eqref{genscheme_low}. For stability estimates, one relies  on the algebraic structure
			of the underlying space. In the stability analysis of dispersive PDEs set in Sobolev
			spaces $H^r$ the following bilinear estimates of type
			\begin{equs}
				\Vert vw \Vert_r \leq c_{r,d} \Vert v \Vert_r \Vert w \Vert_r;
			\end{equs}
			The latter only hold for $r > d/2$. To obtain $L^2$ global error estimates one
			needs to exploit discrete Strichartz estimates and discrete Bourgain spaces which is beyond the scope of this paper. These limitations appear also in the low regularity schemes given in \cite{BS} (see Remark 4.10).
		\end{remark}
		
		\begin{remark} \label{local_error_physical}The local error is written in Fourier space in  \eqref{local_low}. It can also be written in Physical space.
		Let $\Phi_\tau(u_0) =u^1 \approx u(\tau)$ denote the numerical solution at time $t=\tau$. We write 
		\begin{equation}\label{eq:deflocalErr}
			u(\tau) -\Phi_\tau(u_0) = \CO_{\| \cdot\|}(\tau^m  \tilde{\mathcal{L}} u_0)
		\end{equation}
		if in a suitable norm $\| \cdot\|$, it holds that 
		\begin{equation}\label{eq:deflocalErrBound}
			\| u(\tau) -\Phi_\tau(u_0)\| \le C(T,d) \tau^m \sup_{0\le t\le\tau} \|q\left( \tilde{\mathcal{L}} u(t)\right) \|,
		\end{equation}
		for some polynomial $q$, differential operator $\tilde{\mathcal{L}} $, and constant $C$ independent of $\tau$.
		The operator $q\left( \tilde{\mathcal{L}} \cdot\right)$ in Fourier space is associated with  $\sum_{T \in \CT^{\leq r,k}_{0}}  E^{n,r}(T) \Upsilon( T)(\cdot) )$.
		\end{remark}
		
		\begin{example} \label{discretisation_int}
			Let us apply our normal form schemes on iterated integrals coming from NLS with $ n=2 $ and $r=2$.
			We start with the following decorated tree 
			\begin{equs}
			T_1 = 	\begin{tikzpicture}[scale=0.2,baseline=-5]
					\coordinate (root) at (0,0);
					\coordinate (tri) at (0,-2);
					\coordinate (t1) at (-2,2);
					\coordinate (t2) at (2,2);
					\coordinate (t3) at (0,3);
					\draw[kernels2,tinydots] (t1) -- (root);
					\draw[kernels2] (t2) -- (root);
					\draw[kernels2] (t3) -- (root);
					\draw[symbols] (root) -- (tri);
					\node[not] (rootnode) at (root) {};t
					\node[not,label= {[label distance=-0.2em]below: \scriptsize  $ $}] (trinode) at (tri) {};
					\node[var] (rootnode) at (t1) {\tiny{$ k_{\tiny{1}} $}};
					\node[var] (rootnode) at (t3) {\tiny{$ k_{\tiny{2}} $}};
					\node[var] (trinode) at (t2) {\tiny{$ k_3 $}};
				\end{tikzpicture}, \quad \mathfrak{a}(T_1) = T_1.
				\end{equs}
				If we apply \eqref{main_formula_theorem}, one gets 
				\begin{equs}
					r_{T_1} = r - |T_1|_{\text{\tiny{ord}}} = 2 -1 =  1
					\end{equs}
					 and
				\begin{equs}
					(\Pi^{2,2} T_1)(t) & = \Psi^{2,1}_{0,1}( \mathfrak{a}_{}(T_1))(t) - 	\Psi^{2,1}_{0,0}( \mathfrak{a}_{}(T_1))(t) \\ &   + \sum_{m =0}^2
					\left( \mathcal{P}_{2-m} \left( 	(\Pi^{2,2-m} \cdot)(t) \right) \otimes\Psi^{2,1}_{m,0}\left( \mathfrak{a}_{}(\cdot) \right)(t) \right) 	\tilde{\Delta}_{\normalfont\text{\tiny{BCK}}} T_1. 
				\end{equs}
				From the fact that $T_1$ is primitive that is
				\begin{equs}
			 \Delta_{\normalfont\text{\tiny{BCK}}} T_1  = T_1 \otimes \one + \one \otimes T_1, \quad  \tilde{\Delta}_{\normalfont\text{\tiny{BCK}}} T_1  = T_1 \otimes \one,
				\end{equs}
				one gets 
				\begin{equs}
				(\Pi^{2,2} T_1)(t) & = \Psi^{2,1}_{0,1}( \mathfrak{a}_{}(T_1))(t) - 	\Psi^{2,1}_{0,0}( \mathfrak{a}_{}(T_1))(t)   + \sum_{m =0}^2
				\Psi^{2,1}_{m,0}\left( \mathfrak{a}_{}(T_1) \right)(t)
				\\	 & =\Psi^{2,1}_{0,1}( T_1)(t) + 
				\Psi^{2,1}_{1,0}\left( T_1 \right)(t) + \Psi^{2,1}_{2,0}\left( T_1 \right)(t).
				\end{equs}
				Combining the computations of Examples \ref{ex_f} and \ref{ex_psi}, one gets
					\begin{equs}
					\mathscr{F}^{2,1}_{\!{\tiny \text{dom} }}(T_1,0)  =  2 k_1^2,
					\quad \mathscr{F}^{2,1}_{\!{\tiny \text{low} }}(T_1,0)  =  
					-2 k_1(k_2 + k_3) + 2 k_2 k_3.
				\end{equs}
				and 
				\begin{equs}
			(\Pi^{2,2} T_1)(t)	&	= -i \left( \frac{1	}{ i \mathscr{F}^{2,1}_{\!{\tiny\tiny \text{dom} }}(T_1,0)} - \frac{i \mathscr{F}_{\!{\tiny\tiny \text{low} }}^{2,1}(T_1,0)	}{ i^2 \mathscr{F}^{2,1}_{\!{\tiny\tiny \text{dom} }}(T_1,0)^2}  \right)   \left(    e^{i t  \mathscr{F}^{2,1}_{\!{\tiny\tiny \text{dom} }}(T_1,0)} - 1 \right)
				\\ &	 - i^2 t \frac{\mathscr{F}_{\!{\tiny\tiny \text{low} }}^{2,1}(T_1,0)	}{ i \mathscr{F}^{2,1}_{\!{\tiny\tiny \text{dom} }}(T_1,0)}      \left(    e^{i t  \mathscr{F}^{2,1}_{\!{\tiny\tiny \text{dom} }}(T_1,0)}  \right).
				\end{equs}
				where we have used the fact that 
				\begin{equs}
				\Psi^{2,1}_{2,0}\left( T_1 \right)(t)  =0.	
				\end{equs}
				This due to the fact that 
				\begin{equs}
					\Psi^{m,1}_{2,0}\left( T_1 \right)(t) &=-	  \frac{t^{m}}{m!} \sum_{ m=  p-q  } \, i^{p + q-1} \times i \frac{\mathscr{F}_{\!{\tiny\tiny \text{low} }}^{2,1}(T_1,0)^{p}	}{  \mathscr{F}^{2,1}_{\!{\tiny\tiny \text{dom} }}(T_1,0)^{q+1}}          e^{i t  \mathscr{F}^{n,r}_{\!{\tiny\tiny \text{dom} }}(w,m0)} 
					\end{equs}
					with $ p = r+1-1 -0 = 1 $ and $q \geq 0$ which is not compatible with $2=  m=p-q$. Therefore, this term is equal to zero.
				The local error is then given by
				\begin{equs}
					E^{2,2}(T_1) =  (\mathscr{F}^{2,1}_{\normalfont \!{\tiny\tiny \text{low} }}(T_1,0))^{2} .
				\end{equs}
				Both low regularity approximation and local error are exactly the same as obtained in \cite{OS18,BS}.
			 One considers the following decorated trees
		\begin{equs}
			T_3  = \begin{tikzpicture}[scale=0.2,baseline=-5]
				\coordinate (root) at (0,0);
				\coordinate (tri) at (0,-2);
				\coordinate (t1) at (-2,2);
				\coordinate (t2) at (2,2);
				\coordinate (t3) at (0,2);
				\coordinate (t4) at (0,4);
				\coordinate (t41) at (-2,6);
				\coordinate (t42) at (2,6);
				\coordinate (t43) at (0,8);
				\draw[kernels2,tinydots] (t1) -- (root);
				\draw[kernels2] (t2) -- (root);
				\draw[kernels2] (t3) -- (root);
				\draw[symbols] (root) -- (tri);
				\draw[symbols] (t3) -- (t4);
				\draw[kernels2,tinydots] (t4) -- (t41);
				\draw[kernels2] (t4) -- (t42);
				\draw[kernels2] (t4) -- (t43);
				\node[not] (rootnode) at (root) {};
				\node[not] (rootnode) at (t4) {};
				\node[not] (rootnode) at (t3) {};
				\node[not,label= {[label distance=-0.2em]below: \scriptsize  $  $}] (trinode) at (tri) {};
				\node[var] (rootnode) at (t1) {\tiny{$ k_{\tiny{4}} $}};
				\node[var] (rootnode) at (t41) {\tiny{$ k_{\tiny{1}} $}};
				\node[var] (rootnode) at (t42) {\tiny{$ k_{\tiny{3}} $}};
				\node[var] (rootnode) at (t43) {\tiny{$ k_{\tiny{2}} $}};
				\node[var] (trinode) at (t2) {\tiny{$ k_5 $}};
			\end{tikzpicture},  \quad T_2 =\begin{tikzpicture}[scale=0.2,baseline=-5]
				\coordinate (root) at (0,0);
				\coordinate (tri) at (0,-2);
				\coordinate (t1) at (-2,2);
				\coordinate (t2) at (2,2);
				\coordinate (t3) at (0,3);
				\draw[kernels2,tinydots] (t1) -- (root);
				\draw[kernels2] (t2) -- (root);
				\draw[kernels2] (t3) -- (root);
				\draw[symbols] (root) -- (tri);
				\node[not] (rootnode) at (root) {};t
				\node[not,label= {[label distance=-0.2em]below: \scriptsize  $ $}] (trinode) at (tri) {};
				\node[var] (rootnode) at (t1) {\tiny{$ k_{\tiny{1}} $}};
				\node[var] (rootnode) at (t3) {\tiny{$ k_{\tiny{2}} $}};
				\node[var] (trinode) at (t2) {\tiny{$ k_3 $}};
			\end{tikzpicture},
			\quad T_1 = 	\begin{tikzpicture}[scale=0.2,baseline=-5]
				\coordinate (root) at (0,0);
				\coordinate (tri) at (0,-2);
				\coordinate (t1) at (-2,2);
				\coordinate (t2) at (2,2);
				\coordinate (t3) at (0,3);
				\draw[kernels2,tinydots] (t1) -- (root);
				\draw[kernels2] (t2) -- (root);
				\draw[kernels2] (t3) -- (root);
				\draw[symbols] (root) -- (tri);
				\node[not] (rootnode) at (root) {};t
				\node[not,label= {[label distance=-0.2em]below: \scriptsize  $  $}] (trinode) at (tri) {};
				\node[var] (rootnode) at (t1) {\tiny{$ k_{\tiny{4}} $}};
				\node[var] (rootnode) at (t3) {\tiny{$ \ell_1 $}};
				\node[var] (trinode) at (t2) {\tiny{$ k_5 $}};
			\end{tikzpicture}
		\end{equs}
		where $ \ell_1 = -k_1 + k_2 + k_3 $. Then,
		one has
		\begin{equs}
			\mathfrak{a}(T) = T_2 T_1,
			\quad
			\tilde{\Delta}_{\normalfont\text{\tiny{BCK}}} T_3 = \one \otimes T_3 +
		T_2 \otimes T_1.
			\end{equs}
			One has 
			\begin{equs}
				r_{T_3} = r - |T_3|_{\text{\tiny{ord}}} = 2 -2 =  0
			\end{equs}
			and
			\begin{equs}
				(\Pi^{2,2} T_3)(t) & = \Psi^{2,0}_{0,1}( \mathfrak{a}_{}(T_3))(t) - 	\Psi^{2,0}_{0,0}( \mathfrak{a}_{}(T_3))(t)  \\ &  + 
			\sum_{m=0}^2	\left(  	(\Pi^{2,2-m} \cdot)(t)  \otimes\Psi^{2,0}_{m,0}\left( \mathfrak{a}_{}(\cdot) \right)(t) \right) 	\tilde{\Delta}_{\normalfont\text{\tiny{BCK}}} T_3 
				\\ & = \Psi^{2,0}_{0,1}( T_2 T_1)(t) + 	\Psi^{2,0}_{1,0}( T_2 T_1)(t) + \Psi^{2,0}_{2,0}( T_2 T_1)(t) \\ &+ \sum_{m=0}^2	(\Pi^{2,2-m} T_2)(t) \, \Psi^{2,0}_{m,0}\left( T_1 \right)(t). 	
			\end{equs}
		 One has for $m \in \lbrace 0,1,2 \rbrace$,
			\begin{equs}
				\mathscr{F}^{2,0}_{\!{\tiny \text{dom} }}( T_1,  0)  = \mathscr{F}^{2,0}_{\!{\tiny \text{dom} }}(& T_2,  0) = 0,
				\quad \mathscr{F}^{2,0}_{\!{\tiny \text{low} }}( T_1,  0)  =  \mathscr{F}( T_1), \quad  \mathscr{F}^{2,0}_{\!{\tiny \text{low} }}( T_2,  0)  =  \mathscr{F}( T_2),
				\\
					\mathscr{F}^{2,0}_{\!{\tiny \text{dom} }}(T_2 T_1, m 0)  &=  0,
					\quad \mathscr{F}^{2,0}_{\!{\tiny \text{low} }}(T_2 T_1, m 0)   =  \mathscr{F}(T_2 ) +   \mathscr{F}^{2,0}_{\!{\tiny \text{low} }}( T_1,0) = \mathscr{F}(T_2 )
					.
			\end{equs}
			This is due to the fact that 
			\begin{equs}
				\deg(\mathscr{F}( T_1)) =  2 \leq 2, \quad \deg(\mathscr{F}( T_2)) =  2 \leq 2.
			\end{equs}
			Then, one has for $ m \in \lbrace 0,1,2 \rbrace $
		\begin{equs}
			\Psi^{2,0}_{m,0}\left( T_1 \right)(t) & = - it\one_{\lbrace m=1 \rbrace}, \quad (\Pi^{2,1} T_2)(t) = - it,
			\\ 
			\Psi^{2,0}_{0,1}( T_2 T_1)(t)  & = 0, \quad  	\Psi^{2,0}_{1,0}( T_2 T_1)(t) ) = 0, \quad  \Psi^{2,0}_{2,0}( T_2 T_1)(t)  = - i^2 \frac{t^2}{2}.
		\end{equs}
		In the end, one gets
		\begin{equs}
				(\Pi^{2,2} T_3)(t) & = i^2 t^2 - i^2 \frac{t^2}{2} = - \frac{t^2}{2}.
		\end{equs}
		which is the approximation obtained in \cite{BS}. As a conclusion for these computations, one can observe that the low regularity schemes defined in \eqref{genscheme_low} provides a similar local error as the one defined in \cite{BS}. Moreover in this simple case, the schemes are the same obtained via two different derivations. 
		\end{example}
		
		One important question left is to know if the scheme proposed via normal forms coincides with the general low regularity scheme given in \cite{BS}. Without specific assumptions, one does not expect this to be true. Indeed, in \cite{BS}, one starts the decomposition of the lower and dominant parts on the edges close to the leaves, then one proceeds with a Taylor expansion of the lower part and continues toward the root. The scheme proposed here is different as it starts from the root of the tree and progresses in a certain order given by the arborification map toward the leaves. 
		
		A generalisation will be to give a certain order on the blue edges that will dictate the order in which one performs the discretisation, the scheme in \cite{BS} and the normal form scheme being only two specific examples but also the easiest to implement and derive explicit formulae.

		 Below, we provide the assumption that could guarantee that all these schemes have the same local error and that the order of the Taylor expansion does not matter.
		 The proof of such a statement is rather difficult as the combinatorial description of the low regularity schemes given in \cite{BS} differs from the normal form approach. One first has to rewrite the schemes of \cite{BS} with Definition \ref{adaptative_splitting}, which needs to be adapted as one does not work with words in this setting. Even with this, it is not clear that the statement will be true in full generality.

		 Before our next proposition, we recall the way dominant and lower parts are computed on decorated trees. Given a decorated tree $T$, one has
		 for $ T = \prod_{j=1}^n T_j \star T_r $,
		 \begin{equs} \label{def_dom_T}
		 	\mathscr{F}_{\!{\tiny \text{dom} }}(T) & =  \mathcal{P}_{\!{\tiny \text{dom} }} ( 	\mathscr{F}(T_{r}) + 	\sum_{j=1}^n\mathscr{F}_{\!{\tiny \text{dom} }}(T_{j}) )
		 \end{equs}
		 This is	equivalent to \cite[Def. 2.6]{BS}. Our assumption is the following

		\begin{assumption} \label{Assumption_1} Let $w = T_n...T_1$ on the alphabet $A$ such that $w$ appears in the decomposition of  $\mathfrak{a}(T)$ where $ T $ is a decorated tree, one has 
			\begin{equs} \label{condition_dominant_parts}
				\mathscr{F}_{\normalfont \!{\tiny \text{dom} }}(w) = 
			\mathscr{F}_{\normalfont \!{\tiny \text{dom} }}(T).
			\end{equs}
			\end{assumption}

			\begin{remark} Assumption \eqref{Assumption_1}  guarantees that the dominant parts in the various schemes are the same; the order of the letters coming from the arborification does not matter in the computation. This is also a strong result for the local error analysis, as one can expect the lower parts to have the same degree (lower parts being inhomogeneous monomials) and therefore, one expects the local error analysis to be the same.
				One can potentially argue that the schemes look pretty similar in the end up to the point of being exactly the same in some particular cases like NLS (see Example \ref{discretisation_int}). One can potentially write a similar assumption for the resonance analysis given in Definition \ref{adaptative_splitting}.
				\end{remark}

			 Below, we prove that this Assumption is satisfied for equation of type \eqref{dis}. 
			 
			 \begin{proposition} \label{proof_dis}
			 	One has for $w = T_n...T_1$ where $w$ is  a word in the decomposition of $ \mathfrak{a}(T)$ with $T$ a decorated tree,
			 	\begin{equs}
			 	\mathscr{F}_{\normalfont \!{\tiny \text{dom} }}(T) = 	\mathscr{F}_{\normalfont \!{\tiny \text{dom} }}(w) =
			 		2^{1-\sigma}( (-1)^{b_r} k_r + \sum_{j=1}^m (-1)^{b_j}\ell_j)^{\sigma}
			 	\end{equs}
			 	where $ k_r $ is the  decoration of the node connected to the root of $T$ and the $ \ell_j  $ are the leaves decorations that do not appear in any inner nodes of the decorated trees $T_p, p \in \lbrace 1,...,n \rbrace$. They correspond to the decorations of the leaves of $T$.  One also has
			 	\begin{equs}
			 	 (1+ \sigma) a_r = \sigma b_r \mod 2, \quad (1+ \sigma) a_j +1 = \sigma b_j \mod 2
			 	 \quad 
			 	 \end{equs}
			 	 where $ a_r $ is the decoration of the edge connected to the root and $a_j$ is the decoration of the edge connected to the leaf with decoration $\ell_j$.
			 	\end{proposition}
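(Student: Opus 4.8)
The plan is to argue by structural induction, proving the displayed formula \emph{simultaneously} for $\mathscr{F}_{\text{dom}}(T)$ computed on the tree through \eqref{def_dom_T} and for $\mathscr{F}_{\text{dom}}(w)$ computed on any word $w = T_n\cdots T_1$ occurring in $\mathfrak{a}(T)$ through Definition \ref{def_dom}: both will be shown to equal $2^{1-\sigma}\bigl((-1)^{b_r}k_r+\sum_j(-1)^{b_j}\ell_j\bigr)^{\sigma}$, so that in particular they coincide, which is exactly the content of Assumption \ref{Assumption_1} for \eqref{dis}. The engine of the argument is the special algebraic shape coming from \eqref{Lldef}: the top part of each $P_{\mathfrak{t}}(k)$ for $\mathfrak{t}\in\Lab$ is $\varepsilon_{\mathfrak{t}}\,k^{\sigma}$ with $\varepsilon_{\mathfrak{t}}\in\{\pm1\}$, and together with $P_{(\mathfrak{t},\mathfrak{p})}(k)=(-1)^{\mathfrak{p}}P_{\mathfrak{t}}((-1)^{\mathfrak{p}}k)$ this forces, for every forest $F$, the top part of $\mathscr{F}(F)$ (in the maximal–exponent grading $\deg$) to be a $\mathbb{Z}$-linear combination of \emph{pure} powers $\mff(u)^{\sigma}$ of node decorations $u$. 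By the frequency constraint \eqref{frequencies_identity} each $\mff(u)^{\sigma}$ re-expands, through the leaf decorations below $u$, into a $\sigma$-th power of a signed sum of leaf frequencies, whose maximal–exponent part is the corresponding signed sum of $\ell_i^{\sigma}$ (all mixed monomials dropping to strictly smaller $\deg$). This is what will make $\mathcal{P}_{\text{dom}}$ applicable at every stage.

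I would first dispatch the base case, a single letter $T_r=\CI_{(\mathfrak{t}_2,a)}(\lambda_{\ell}\prod_{j=1}^{n}\CI_{(\mathfrak{t}_1,a_j)}(\lambda_{\ell_j}))$. Using Definition \ref{dom_freq}, $\mathscr{F}(T_r)=P_{(\mathfrak{t}_2,a)}(\ell)+\sum_j P_{(\mathfrak{t}_1,a_j)}(\ell_j)$ with $(-1)^{a}\ell=\sum_j(-1)^{a_j}\ell_j$; expanding the top part of $P_{(\mathfrak{t}_2,a)}(\ell)$ into its maximal–exponent part and adding the leaf contributions, one reads off the coefficient of each surviving $\ell_j^{\sigma}$ and checks it has the form $a'\cdot a_j'$ with $a'$ independent of $j$ and $a_j'\in\{0,1\}$. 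Then $\mathcal{P}_{\text{dom}}$ reassembles these into a perfect power $2^{1-\sigma}K^{\sigma}$, and the sign data $(-1)^{b_r},(-1)^{b_j}$ together with the congruences $(1+\sigma)a_r\equiv\sigma b_r$ and $(1+\sigma)a_j+1\equiv\sigma b_j\pmod 2$ are obtained by tracking the parity of the exponents of $(-1)$ through this computation (so that only $a_r$ is constrained when $\sigma$ is even and only $b_r$ when $\sigma$ is odd).

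The inductive step is the merging step. Write $T=\prod_{j=1}^{n}T_j\star T_r$ with $T_r$ the root corolla as in \eqref{star_decomp}, so that $\mathscr{F}_{\text{dom}}(T)=\mathcal{P}_{\text{dom}}\bigl(\mathscr{F}(T_r)+\sum_j\mathscr{F}_{\text{dom}}(T_j)\bigr)$; on the word side $w=T_n w'$ with $\mathscr{F}_{\text{dom}}(w)=\mathcal{P}_{\text{dom}}\bigl(\mathscr{F}(T_n)+\mathscr{F}_{\text{dom}}(w')\bigr)$, where $\mathfrak{a}$ produces $w'$ (resp.\ each $\mathfrak{a}(T_j)$) through the recursion $\mathfrak{a}_p=\mathcal{M}_{\text{c}}(P_A\otimes\mathfrak{a}_{p-1})\curvearrowright^{*}$. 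By the induction hypothesis $\mathscr{F}_{\text{dom}}(T_j)=2^{1-\sigma}K_j^{\sigma}$, and by \eqref{frequencies_identity} the root decoration of $T_j$ equals $\pm$ the decoration of the leaf of $T_r$ onto which it is grafted. I would then show that in $\mathscr{F}(T_r)+\sum_j 2^{1-\sigma}K_j^{\sigma}$ the pure power attached to that leaf of $T_r$ is exactly absorbed by the leading term of $2^{1-\sigma}K_j^{\sigma}$, leaving only the genuinely exposed-leaf pieces of each $K_j$, and that the remaining maximal–exponent part is again a perfect $\sigma$-th power $2^{1-\sigma}(K')^{\sigma}$ with $K'=(-1)^{b_r}k_r+\sum(-1)^{b_j}\ell_j$ summed over the leaves of $T$ that do not occur among the inner-node decorations of the letters; the congruences for the new signs follow by the same $(-1)^{\mathfrak{p}}$-propagation through the grafting.

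The hard part will be precisely this merging step: one must check that, after adding the corolla's $\mathscr{F}(T_r)$ to the (already perfect-power) dominant parts of the grafted subtrees, the surviving maximal–exponent monomials still have a single common coefficient $2^{1-\sigma}$ (up to the harmless $(\pm1)^{\sigma}$), so that Definition \ref{proj_dom} genuinely applies and returns the closed form, and that the sign/parity bookkeeping remains coherent with \eqref{frequencies_identity}. Granting this, the key observation is that at top degree the merge operation merely \emph{replaces one leaf power by the dominant power of whatever is grafted there}, an operation visibly insensitive to the order in which the letters of $w$ are processed; hence the recursion of Definition \ref{def_dom} on any $w\in\mathfrak{a}(T)$ and the recursion \eqref{def_dom_T} on $T$ unfold to one and the same expression, which is the one displayed. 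This yields both equalities and establishes Assumption \ref{Assumption_1} for equations of type \eqref{dis}.
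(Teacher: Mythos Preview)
Your proposal is correct and follows essentially the same approach as the paper: the paper also proceeds by induction, proving the closed formula $2^{1-\sigma}(\,(-1)^{b_r}k_r+\sum_j(-1)^{b_j}\ell_j\,)^{\sigma}$ separately for $\mathscr{F}_{\text{dom}}(w)$ (induction on the word length, peeling off the top letter $T_n$) and for $\mathscr{F}_{\text{dom}}(T)$ (structural induction via the $\star$-decomposition $T=\prod_j\hat T_j\star T_r$), with the same core computation at each step---namely that adding a corolla's $\mathscr{F}$ replaces one leaf's pure $\sigma$-power by the pure powers of the new leaves, so that $\mathcal{P}_{\text{dom}}$ again returns a perfect $\sigma$-th power with coefficient $2^{1-\sigma}$. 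The only organizational difference is that you package the two inductions together and emphasize the order-independence of the merge explicitly, whereas the paper runs them one after the other; the substance is the same.
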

			 	\begin{proof}
			 		We proceed by recurrence on the size of the word $w$. One has
			 		\begin{equs}
			 			\mathscr{F}_{\!{\tiny \text{dom} }}(w) & =  \mathcal{P}_{\!{\tiny \text{dom} }} \left( 	\mathscr{F}(T_{n}) + 	\mathscr{F}_{\!{\tiny \text{dom} }}(T_{n-1}...T_1) \right).
			 		\end{equs}
			 		By the recurrence hypothesis, one has from Definition \ref{def_dom}
			 		\begin{equs}
			 			\mathscr{F}_{\!{\tiny \text{dom} }}(T_{n-1}...T_1) = 2^{1-\sigma}((-1)^{b_r} k_r + \sum_{j=1}^m (-1)^{b_j}\ell_j)^{\sigma}.
			 		\end{equs} 
			 		The root of $T_n$ is associated with one of the leaves of the $T_j$ as $ T_n $ has been obtained via the arborification procedure. Then, the decoration of the edge connected to the root of $T_n$ is the same as the edge connected to some leaf $\ell_p$ belonging to some tree $T_j$.
			 		Then, one has $(1 + \sigma) a_p = \sigma b_p \mod 2 $ and 
			 		\begin{equs}
			 			\mathscr{F}(T_{n}) = (-1)^{a_p} ((-1)^{a_p}\ell_p)^{\sigma} - \sum_{j=1}^{q} (-1)^{c_j} ((-1)^{c_j}\tilde{\ell}_j)^{\sigma} + \cdots
			 		\end{equs}
			 		where we have considered only the leading terms in $\sigma$. One also has from \eqref{frequencies_identity}
			 		\begin{equs}
			 			(-1)^{a_p}\ell_p = \sum_{j=1}^{q} (-1)^{c_j}\tilde{\ell}_j.
			 		\end{equs}
			 		Then, 
			 		\begin{equs}
			 \,	&	\mathscr{F}(T_{n}) + 	\mathscr{F}_{\!{\tiny \text{dom} }}(T_{n-1}...T_1) \\&= 2^{1-\sigma}((-1)^{b_r} k_r + \sum_{j=1}^m (-1)^{b_j}\ell_j)^{\sigma}  +(-1)^{a_p} ((-1)^{a_p}\ell_p)^{\sigma} \\&  - \sum_{j=1}^{q} (-1)^{c_j} ((-1)^{c_j}\tilde{\ell}_j)^{\sigma} + \cdots
			 		\\ &  = 2^{1-\sigma}((-1)^{b_r} k_r + \sum_{j \neq p} (-1)^{b_j}\ell_j
			 		+ \sum_{j=1}^{q} (-1)^{\hat{c}_j} \tilde{\ell}_j
			 		)^{\sigma} 
			 		\end{equs}
			 		where $ (1+\sigma) c_j +1 = \sigma \hat{c}_j \mod 2$ and  we have neglected the terms of lower order in the $\cdots$. Moreover, we have used the fact that $k_r$ is a linear combination of the $ \ell_j $ with coefficients belonging to $ \lbrace 1,-1 \rbrace $.  One has
			 		\begin{equs}
			 		(-1)^{b_r} k_r + \sum_{j=1}^m (-1)^{b_j}\ell_j = \sum_{j=1}^m d_j (-1)^{b_j}\ell_j
			 		\end{equs}
			 		with $ d_j \in \lbrace 0,2 \rbrace $.
		This allows us to conclude for the explicit formula of $ \mathscr{F}_{\normalfont \!{\tiny \text{\tiny{dom}} }}(w) $.  The proof for  $ \mathscr{F}_{\normalfont \!{\tiny \text{\tiny{dom}} }}(w) $ works a bit the same as for the words. We proceed by induction on the construction of $T$. From \eqref{def_dom_T} one has $ T = \prod_{j=1}^n \hat{T}_j \star T_r $,
		\begin{equs}
			\mathscr{F}_{\!{\tiny \text{dom} }}(T) & =  \mathcal{P}_{\!{\tiny \text{dom} }} ( 	\mathscr{F}(T_{r}) + 	\sum_{j=1}^n\mathscr{F}_{\!{\tiny \text{dom} }}(\hat{T}_{j}) ).
		\end{equs}
		One applies the induction hypothesis to each of the $\hat{T}_j$ and gets
		\begin{equs}
				\mathscr{F}_{\normalfont \!{\tiny \text{dom} }}(\hat{T}_j)	=
			2^{1-\sigma}( (-1)^{b_j} k_j + \sum_{p=1}^{m_j} (-1)^{b_{j,p}}\ell_{j,p})^{\sigma}.
		\end{equs}
		One concludes from the fact that 
		\begin{equs}
				\mathscr{F}(T_{r}) = (-1)^{a_r} ((-1)^{a_r}k_r)^{\sigma} - \sum_{j=1}^{n} (-1)^{c_j} ((-1)^{c_j}k_j)^{\sigma} + \cdots
		\end{equs}
		with $ (1+ \sigma) c_j +1 = \sigma b_j \mod 2 $.
			 		\end{proof}
			 		
			 		\begin{example}
			 			One considers the following decorated trees from cubic NLS
			 			\begin{equs}	T_3  = \begin{tikzpicture}[scale=0.2,baseline=-5]
			 				\coordinate (root) at (0,0);
			 				\coordinate (tri) at (0,-2);
			 				\coordinate (t1) at (-2,2);
			 				\coordinate (t2) at (2,2);
			 				\coordinate (t3) at (0,2);
			 				\coordinate (t4) at (0,4);
			 				\coordinate (t41) at (-2,6);
			 				\coordinate (t42) at (2,6);
			 				\coordinate (t43) at (0,8);
			 				\draw[kernels2,tinydots] (t1) -- (root);
			 				\draw[kernels2] (t2) -- (root);
			 				\draw[kernels2] (t3) -- (root);
			 				\draw[symbols] (root) -- (tri);
			 				\draw[symbols] (t3) -- (t4);
			 				\draw[kernels2,tinydots] (t4) -- (t41);
			 				\draw[kernels2] (t4) -- (t42);
			 				\draw[kernels2] (t4) -- (t43);
			 				\node[not] (rootnode) at (root) {};
			 				\node[not] (rootnode) at (t4) {};
			 				\node[not] (rootnode) at (t3) {};
			 				\node[not,label= {[label distance=-0.2em]below: \scriptsize  $  $}] (trinode) at (tri) {};
			 				\node[var] (rootnode) at (t1) {\tiny{$ k_{\tiny{4}} $}};
			 				\node[var] (rootnode) at (t41) {\tiny{$ k_{\tiny{1}} $}};
			 				\node[var] (rootnode) at (t42) {\tiny{$ k_{\tiny{3}} $}};
			 				\node[var] (rootnode) at (t43) {\tiny{$ k_{\tiny{2}} $}};
			 				\node[var] (trinode) at (t2) {\tiny{$ k_5 $}};
			 			\end{tikzpicture},  \quad T_2 =\begin{tikzpicture}[scale=0.2,baseline=-5]
			 				\coordinate (root) at (0,0);
			 				\coordinate (tri) at (0,-2);
			 				\coordinate (t1) at (-2,2);
			 				\coordinate (t2) at (2,2);
			 				\coordinate (t3) at (0,3);
			 				\draw[kernels2,tinydots] (t1) -- (root);
			 				\draw[kernels2] (t2) -- (root);
			 				\draw[kernels2] (t3) -- (root);
			 				\draw[symbols] (root) -- (tri);
			 				\node[not] (rootnode) at (root) {};t
			 				\node[not,label= {[label distance=-0.2em]below: \scriptsize  $ $}] (trinode) at (tri) {};
			 				\node[var] (rootnode) at (t1) {\tiny{$ k_{\tiny{1}} $}};
			 				\node[var] (rootnode) at (t3) {\tiny{$ k_{\tiny{2}} $}};
			 				\node[var] (trinode) at (t2) {\tiny{$ k_3 $}};
			 			\end{tikzpicture},
			 			\quad T_1 = 	\begin{tikzpicture}[scale=0.2,baseline=-5]
			 				\coordinate (root) at (0,0);
			 				\coordinate (tri) at (0,-2);
			 				\coordinate (t1) at (-2,2);
			 				\coordinate (t2) at (2,2);
			 				\coordinate (t3) at (0,3);
			 				\draw[kernels2,tinydots] (t1) -- (root);
			 				\draw[kernels2] (t2) -- (root);
			 				\draw[kernels2] (t3) -- (root);
			 				\draw[symbols] (root) -- (tri);
			 				\node[not] (rootnode) at (root) {};t
			 				\node[not,label= {[label distance=-0.2em]below: \scriptsize  $  $}] (trinode) at (tri) {};
			 				\node[var] (rootnode) at (t1) {\tiny{$ k_{\tiny{4}} $}};
			 				\node[var] (rootnode) at (t3) {\tiny{$ \ell_1 $}};
			 				\node[var] (trinode) at (t2) {\tiny{$ k_5 $}};
			 			\end{tikzpicture}.
			 			\end{equs}
			 			One has
			 			\begin{equs}
			 				\mathscr{F}_{\!{\tiny \text{dom} }}(T_2) = 2 k_1^2 = 2^{1-2} ( -k_1 + k_2 + k_3 -k_1 -k_2 -k_3  )^2.
			 			\end{equs}
			 			Then
			 			\begin{equs}
			 			\mathscr{F}_{\!{\tiny \text{dom} }}(T_3) & = 2^{1-2} ( -k_4 -k_1 + k_2 +k_5 -k_4 -k_1 - k_2 - k_3 -k_5)^2 \\ &= 2 (-k_1 - k_4)^2.
			 			\end{equs}
			 			On the other hand, one has
			 			\begin{equs}
			 		\mathscr{F}_{\!{\tiny \text{dom} }}(T_3) & = \mathcal{P}_{\!{\tiny \text{dom} }} ( 	\mathscr{F}(T_{1}) + 	\mathscr{F}_{\!{\tiny \text{dom} }}(T_{2}) )
			 		\\ & = \mathcal{P}_{\!{\tiny \text{dom} }} ( 	
			 		(-k_4 + \ell_1 + k_5)^2 + k_4^2 - \ell_1^2 - k_5^2 + 	2 k_1^2 )
			 		\\ & =2 (k_1 + k_4)^2.
			 			\end{equs}
			Then, 
				\begin{equs}
				\mathscr{F}_{\!{\tiny \text{dom} }}(T_2 T_1) & = \mathcal{P}_{\!{\tiny \text{dom} }} ( 	\mathscr{F}(T_{2}) + 	\mathscr{F}_{\!{\tiny \text{dom} }}(T_{1}) )
				\\ & = \mathcal{P}_{\!{\tiny \text{dom} }} ( 	
				(-k_1 + k_2 + k_3)^2 + k_1^2 - k_2^2 - k_3^2 + 	2 k_4^2 )
				\\ & =2 (k_1 + k_4)^2.
			\end{equs}
			In the end, one has
			\begin{equs}
				\mathscr{F}_{\!{\tiny \text{dom} }}(T_3) = 	\mathscr{F}_{\!{\tiny \text{dom} }}(T_2 T_1) = 2 (k_1 + k_4)^2.
			\end{equs}
			With the previous identity, we have checked Proposition \ref{proof_dis} for the decorated tree $T_3$ and the word $ w = T_2 T_1 $.
			 		\end{example}

\end{document}